\title{Eigenfunction Expansion and the Decomposition of Jacobi Operators on $\mathbb{Z}$}
\author{Netanel Levi\footnote{Institute of Mathematics, The Hebrew University of Jerusalem, Jerusalem, 91904, Israel.
		Supported in part by the Israel Science Foundation (Grant No.\ 1378/20) and in part by the United States-Israel Binational Science Foundation
		(Grant No.\ 2020027), Email: netanel.levi@mail.huji.ac.il}}
\numberwithin{equation}{section}
\numberwithin{figure}{section}
\theoremstyle{plain}
\newtheorem{theorem}{\protect\theoremname}[section]
\theoremstyle{definition}
\newtheorem{definition}[theorem]{\protect\definitionname}
\theoremstyle{definition}
\newtheorem*{definition*}{\protect\definitionname}
\theoremstyle{remark}
\newtheorem{remark}[theorem]{\protect\remarkname}
\theoremstyle{plain}
\newtheorem{lemma}[theorem]{\protect\lemmaname}
\theoremstyle{plain}
\theoremstyle{plain}
\newtheorem{prop}[theorem]{\protect\propositionname}
\theoremstyle{plain}
\newtheorem{claim}[theorem]{\protect\claimname}
\DeclareMathOperator{\im}{Im}
\DeclareMathOperator{\tr}{tr}
\DeclareMathOperator{\diag}{diag}
\DeclareMathOperator{\vspan}{sp}
\DeclareMathOperator{\slim}{s-\lim}
\DeclareMathOperator{\wlim}{w-\lim}
\providecommand{\claimname}{Claim}
\providecommand{\corollaryname}{Corollary} 
\providecommand{\definitionname}{Definition}
\providecommand{\lemmaname}{Lemma}
\providecommand{\propositionname}{Proposition}
\providecommand{\remarkname}{Remark}
\providecommand{\theoremname}{Theorem}
\begin{document}
	\maketitle
	\sloppy
	\begin{abstract}
		Let $J$ be a Jacobi operator on $\ell^2\left(\mathbb{Z}\right)$. We prove an eigenfunction expansion theorem for the singular part of $J$ using subordinate solutions to the eigenvalue equation. We exploit this theorem in order to show that $J$ can be decomposed as a direct integral of half-line operators.
	\end{abstract}
	\section{Introduction}\label{intro}
	Let $J$ be a Jacobi operator on $\ell^2\left(\mathbb{Z}\right)$, by which we mean an operator of the form
	\begin{equation}\label{jac_op_line_eq}
		\left(J\psi\right)\left(n\right)=a_{n-1}\psi\left(n-1\right)+a_n\psi\left(n+1\right)+b_n\psi\left(n\right),
	\end{equation}
	where $\left(a_n\right)_{n\in\mathbb{Z}}$ and $\left(b_n\right)_{n\in\mathbb{Z}}$ are real-valued and bounded and $\left(a_n\right)_{n\in\mathbb{Z}}$ is positive. In this work, we are inerested in studying the connection between the characterization of the singular part of $J$ via the notion of subordinacy, and the eigenfunction expansion of $J$. Throughout, we will study solutions to the eigenvalue equation, i.e. functions $u:\mathbb{Z}\to\mathbb{C}$ which satisfy
	\begin{equation}\label{ev_eq_intro}
		a_{n-1}u\left(n-1\right)+a_nu\left(n+1\right)+b_nu\left(n\right)=Eu\left(n\right)
	\end{equation}
	for some $E\in\mathbb{R}$ and for every $n\in\mathbb{Z}$. We say that a Borel measure $\mu$ is in the spectral measure class of $J$ if it is equivalent to $P$, the spectral measure of $J$, namely $P\left(A\right)=0\iff\mu\left(A\right)=0$ for any Borel set $A$ (see Definition \ref{measure_class_def}).
	
	It is well-known (see, e.g.\ \cite[Chapter V, Theorem 2.1	]{Ber}) that for any such $\mu$, there exists a function $N:\mathbb{R}\to\mathbb{N}$ which is defined $\mu$-almost everywhere, and for $\mu$-almost every $E\in\mathbb{R}$ there exists a collection of solutions of (\ref{ev_eq_intro}), $\left(u_E^k\right)_{k=1}^{N\left(E\right)}$, such that for any compactly supported $\psi\in\ell^2\left(\mathbb{Z}\right)$ and for any Borel set $A$,
	\begin{equation}\label{ef_exp_intro}
		P\left(A\right)\psi=\int_\mathbb{R}\sum\limits_{k=1}^{N\left(E\right)}\langle u_E^k,\psi\rangle u_E^kd\mu\left(E\right).
	\end{equation}
	A few remarks are in order
	\begin{enumerate}
		\item Both the choice of solutions and the function $N$ are defined $\mu$-almost everywhere.
		\item $N$ is called the multiplicity function of $J$.
		\item This is a generalization of the finite-dimensional version of the spectral theorem.
		\item We are citing here a very partial version of the eigenfunction expansion theorem. For example, (\ref{ef_exp_intro}) actually holds for a larger class of sequences. We refer the reader to \cite[Chapter V]{Ber} for more information on eigenfunction expansions.
	\end{enumerate}
	It is easy to see that for any $E\in\mathbb{R}$, any solution $u$ of (\ref{ev_eq_intro}) is determined by $u\left(0\right)$ and $u\left(1\right)$, and vice-versa: for any $\alpha_0,\alpha_1\in\mathbb{C}$, there exists a solution $u$ such that $u\left(0\right)=\alpha_0,u\left(1\right)=\alpha_1$. This implies that the space of solutions of (\ref{ev_eq_intro}) is two-dimensional, and so $N\left(E\right)\leq 2$ for any $E\in\mathbb{R}$. Furthermore, if we restrict our attention to the singular part of $J$, then $N\left(E\right)=1$. More formally, in \cite{Kac} it was shown that $N\left(E\right)=1$ for $\mu_s$-almost every $E\in\mathbb{R}$. This was also proved later in \cite{Sim1} using the theory of finite rank perturbations, and in \cite{G2,L} using subordinacy theory, which relates singularity of $\mu$ to asymptotic properties of solutions of (\ref{ev_eq_intro}).
	
	Subordinacy theory was first introduced in \cite{GP}, and was later extended and developed in many ways (for example \cite{DGO,G,G2,JL,JL2,KP,L}). Roughly speaking, a solution to (\ref{ev_eq_intro}) will be called subordinate if it has slower growth than any other solution at both $\pm\infty$. A formal definition is given in Subection \ref{sub_theory_borel_transform_section}. Now, a support to the singular part of $\mu$ can be given in terms of subordinacy in the following way:
	\begin{theorem}\emph{(\cite[Theorem 2.7.11]{DF})}
		Let $\mu_s$ be the part of $\mu$ which is singular w.r.t.\ the Lebesgue measure. Then $\mu_s$ is supported on the set
		\begin{center}
			$S=\left\{E\in\mathbb{R}:\text{ there exists a subordinate solution to (\ref{ev_eq_intro})}\right\}$.
		\end{center}
	\end{theorem}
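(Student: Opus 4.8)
The plan is to reduce the whole-line problem to two half-line problems and then combine the classical Gilbert--Pearson subordinacy analysis with the boundary-value theory of Herglotz functions.

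First I would cut $\mathbb{Z}$ at the origin and introduce the half-line Jacobi operators $J_+$ on $\{n\geq 1\}$ and $J_-$ on $\{n\leq 0\}$, each equipped with a Dirichlet boundary condition at the cut. To each I associate its Weyl--Titchmarsh $m$-function $m_\pm(z)$, a Herglotz function mapping the upper half-plane into itself, together with the Weyl solutions $u_\pm(\cdot,z)$ of (\ref{ev_eq_intro}) at parameter $z$ that are $\ell^2$ near $\pm\infty$. The resolvent of $J$ then has the explicit Green's function
\begin{equation*}
	G(n,n;z)=\frac{u_-(n,z)\,u_+(n,z)}{W\left(u_-,u_+\right)},
\end{equation*}
where $W$ is the (constant) Wronskian, and the diagonal measures $\mu_{\delta_n}$ obtained from the boundary values of $\im G(n,n;z)$ furnish a measure in the spectral measure class of $J$. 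The virtue of this reduction is that subordinacy on the line decouples: a solution of (\ref{ev_eq_intro}) is subordinate precisely when it is subordinate at $+\infty$ (a property governed by $m_+$) and at $-\infty$ (governed by $m_-$) simultaneously.

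The quantitative heart of the argument is the link between the boundary behavior of $m_\pm$ and the growth of solutions, measured through the truncated norms $\|u\|_L^2=\sum_{n=1}^{\lfloor L\rfloor}\abs{u(n)}^2$. Fixing $E$ and writing $z=E+i\epsilon$, one selects the length scale $L=L(\epsilon)$ by the normalization $\|u_D\|_L\,\|u_N\|_L=\tfrac{1}{2\epsilon}$, where $u_D,u_N$ are the Dirichlet and Neumann solutions at $z$. The Jitomirskaya--Last inequalities then bound $\abs{m_+(E+i\epsilon)}$ from above and below by fixed multiples of $\|u_N\|_L/\|u_D\|_L$, uniformly in $\epsilon$. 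It follows that $\abs{m_+(E+i\epsilon)}\to\infty$ as $\epsilon\to 0^+$ exactly at those $E$ for which the Dirichlet solution grows strictly more slowly than every linearly independent solution, that is, exactly when a subordinate solution exists at $+\infty$; the symmetric statement holds for $m_-$ at $-\infty$.

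Finally I would invoke the de la Vall\'ee Poussin theorem on boundary values of Herglotz functions: the singular part of the measure attached to a Herglotz function is supported on the set where the imaginary part of its boundary value equals $+\infty$. Applying this to $\im G(n,n;E+i0)$ localizes $\mu_s$ on the set where this diagonal boundary value diverges. The main obstacle, and the delicate part of the argument, is to show that this set is contained in $S$: one must prove that at $\mu_s$-almost every such $E$ the two Weyl solutions $u_\pm(\cdot,z)$ converge, as $z\to E+i0$, to genuine solutions of (\ref{ev_eq_intro}) that are subordinate at the respective ends, and that the divergence of the diagonal Green's function forces the limiting Wronskian $W(u_-,u_+)$ to vanish. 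The vanishing of the Wronskian makes the two limiting solutions proportional, yielding a single solution subordinate at both $\pm\infty$, i.e.\ subordinate on the whole line. Controlling these limits, and ruling out the competing scenario in which only one end carries a subordinate solution while the other remains oscillatory, is exactly where the interplay between $m_+$ and $m_-$ must be exploited.
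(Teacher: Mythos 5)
The paper itself offers no proof of this statement: it is quoted from \cite{DF}, and the equivalent formulation (Theorem \ref{sub_thm_line}) is used later with pointers to \cite{G,DF,L}. Your outline follows the same Gilbert--Pearson route those sources take --- cut $\mathbb{Z}$ at the origin, control $m_\pm$ via the Jitomirskaya--Last inequalities, and localize $\mu_s$ through the boundary behavior of Borel transforms --- so in spirit it matches the cited proof. However, as written it contains one genuine gap, and you place the difficulty in the wrong spot.

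The step you flag as delicate --- convergence of the Weyl solutions $u_\pm\left(\cdot,z\right)$ as $z\to E+i0$ --- is actually free: solutions of (\ref{ev_eq_intro}) at spectral parameter $z$ depend polynomially on $z$ through the transfer matrices, so the moment $\lim_{\epsilon\to0}m_\pm\left(E+i\epsilon\right)=\cot\theta_\pm$ exists in $\mathbb{R}\cup\left\{\infty\right\}$, the suitably normalized Weyl solutions converge pointwise to $u_{\theta_\pm,E}$, and subordinacy at each end is then exactly the half-line equivalence (\ref{JL_hl_sub_thm}). The real gap is earlier: Fatou-type theorems give boundary values of the Herglotz functions $m_\pm$ only Lebesgue-almost everywhere, while $\mu_s$ is by definition singular with respect to Lebesgue measure, so nothing in your argument guarantees that $m_\pm\left(E+i\epsilon\right)$ have limits at $\mu_s$-almost every $E$. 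Nor does divergence of the diagonal Green's function --- a statement about a combination of $m_+$ and $m_-$ --- by itself yield individual limits, or the matching relation (vanishing Wronskian, i.e.\ $m_+\left(E+i0\right)=-m_-\left(E+i0\right)$ in the appropriate convention) that produces a single solution subordinate at both ends. Establishing these boundary limits $\mu_s$-a.e.\ together with the matching relation is precisely where \cite{DF} and \cite{L} do the actual work, e.g.\ via ratios of Borel transforms and Poltoratskii's theorem (Theorem \ref{pol_thm}), in the spirit of the paper's Appendix proof of Proposition \ref{L_prop}. Until you supply that step, the inclusion of the support of $\mu_s$ in $S$ is asserted rather than proved.
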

	For the rest of this section, the measure $\mu$ is taken to be $\mu_{\delta_0}+\mu_{\delta_1}$, where $\mu_\psi$ is the spectral measure of $\psi$ w.r.t.\ $J$. It is not hard to see that $\mu$ is in the spectral measure class of $J$. It is also not hard to show that if a subordinate solution exists, then it is unique (up to scalar multiplication). However, as mentioned before, the space of solutions is two-dimensional, and so since $N\left(E\right)=1$ for $\mu_s$-almost $E\in\mathbb{R}$, it is natural to expect that the solution which is chosen in the RHS of (\ref{ef_exp_intro}) is the subordinate one. Given a function $f:\mathbb{Z}\to\mathbb{R}_{>0}$, we denote
	\begin{center}
		$\ell^2\left(\mathbb{Z};f\right)\coloneqq\left\{u:\mathbb{Z}\to\mathbb{C}:\sum\limits_{n\in\mathbb{Z}}\frac{\left|u\left(n\right)\right|^2}{f\left(n\right)}<\infty\right\}$.
	\end{center}
	Let us also denote by $P_s$ the part of $P$ which is singular w.r.t\ the Lebesgue measure (we refer the reader to Subsection \ref{prelim_spectral_thm} for precise definitions). In this work, we show
	\begin{theorem}\label{main_thm_1_intro}
		Let $\mu\coloneqq\mu_{\delta_0}+\mu_{\delta_1}$. Then there exists a choice, for $\mu_s$-almost every $E\in\mathbb{R}$, of a subordinate solution $u_E$ which satisfies $\left|u_E\left(0\right)\right|^2+\left|u_E\left(1\right)\right|^2=1$, such that for any Borel set $B$ and for any $\psi\in\ell^2\left(\mathbb{Z};n^2\right)$,
		\begin{center}
			$P_s\left(B\right)\psi=\int_{B\cap S}\langle u_E,\psi\rangle u_Ed\mu_s\left(E\right)$.
		\end{center}
	\end{theorem}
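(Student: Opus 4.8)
The plan is to obtain the singular expansion from the general expansion (\ref{ef_exp_intro}) specialized to the generating pair $\{\delta_0,\delta_1\}$, and then to recognize the solution that appears as the subordinate one. Since $\{\delta_0,\delta_1\}$ generates $\ell^2(\mathbb Z)$ under $J$, the spectral data is packaged in the $2\times 2$ positive matrix-valued measure $\mathbf M$ with entries $M_{ij}(A)=\langle\delta_i,P(A)\delta_j\rangle$, $i,j\in\{0,1\}$, whose trace is exactly $\mu=\mu_{\delta_0}+\mu_{\delta_1}$. As each entry is absolutely continuous with respect to $\mu$, I would write $d\mathbf M=\mathbf W\,d\mu$ with $\mathbf W(E)\ge 0$ and $\tr\mathbf W(E)=1$ for $\mu$-a.e.\ $E$. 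On the singular part $N(E)=1$ for $\mu_s$-a.e.\ $E$, so $\mathbf W(E)$ has rank one there and $\mathbf W(E)=\xi(E)\xi(E)^{*}$ for a unit vector $\xi(E)\in\mathbb C^2$. Restricting (\ref{ef_exp_intro}) to $P_s$ and comparing with $d\mathbf M_s=\xi\xi^{*}\,d\mu_s$ then yields, for every compactly supported $\psi$, the identity $P_s(B)\psi=\int_{B\cap S}\langle v_E,\psi\rangle v_E\,d\mu_s$ (the intersection with $S$ being immaterial, as $\mu_s$ is supported on $S$), where $v_E$ is the solution of (\ref{ev_eq_intro}) with initial data $(v_E(0),v_E(1))=\xi(E)$. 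The condition $\tr\mathbf W=1$ forces $\abs{v_E(0)}^2+\abs{v_E(1)}^2=1$ with no rescaling of $\mu_s$, which is exactly the normalization in the statement.

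The heart of the matter is to show that for $\mu_s$-a.e.\ $E\in S$ this $v_E$ is, up to a scalar, the subordinate solution, i.e.\ that the rank-one direction $\xi(E)$ is the initial data of the subordinate solution $u_E$. Here I would use the link between subordinacy and the boundary values of the Borel transform set up in Subsection \ref{sub_theory_borel_transform_section}: the rank-one structure of $\mathbf W(E)$ is governed by the nontangential boundary behaviour of the matrix Borel transform $\int(E'-z)^{-1}\,d\mathbf M(E')$, whose entries are the Green's-function matrix elements of $J$. Writing these through the two Weyl solutions that are $\ell^2$ at $+\infty$ and at $-\infty$, and using that at a point of $S$ both of them collapse, in the quantitative sense supplied by subordinacy theory, onto the subordinate solution, the singular part of the Green's-function matrix becomes rank one with range spanned by $(u_E(0),u_E(1))$; comparison with $\xi(E)\xi(E)^{*}$ gives $\xi(E)\parallel(u_E(0),u_E(1))$. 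I expect this identification to be the main obstacle: one must control the nontangential boundary values of the half-line $m$-functions on the singular set and convert the asymptotic statement about Weyl solutions into an honest $\mu_s$-a.e.\ equality of directions, which is exactly where the Jitomirskaya--Last-type estimates relating $\abs{m(E+i\epsilon)}$ to the growth of solutions are needed.

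It remains to pass from compactly supported $\psi$ to all of $\ell^2(\mathbb Z;n^2)$. The quantitative input is the identity $\int_S\abs{u_E(n)}^2\,d\mu_s(E)=\langle\delta_n,P_s\delta_n\rangle\le 1$, obtained by testing the already established formula against $\delta_n$. Summing this against the weight and using Tonelli gives $\int_S\sum_{n}\frac{\abs{u_E(n)}^2}{n^2}\,d\mu_s\le\sum_{n}\frac{1}{n^2}<\infty$, so that for $\mu_s$-a.e.\ $E$ the subordinate solution lies in $\ell^2(\mathbb Z;n^2)$ and, more importantly, the map $E\mapsto u_E$ belongs to $L^2\big(\mu_s;\ell^2(\mathbb Z;n^2)\big)$. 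Consequently, with the pairing understood so that the right-hand side is a convergent $\ell^2(\mathbb Z;n^2)$-valued integral (concretely, as the inner product of $\ell^2(\mathbb Z;n^2)$), the expression $\int_{B\cap S}\langle u_E,\psi\rangle u_E\,d\mu_s$ is an absolutely convergent Bochner integral depending continuously on $\psi\in\ell^2(\mathbb Z;n^2)$; since the identity already holds on the dense set of compactly supported $\psi$, it extends to the whole space. This also explains why $n^2$ is the natural weight: it is precisely the summable weight that balances the uniform bound $\int_S\abs{u_E(n)}^2\,d\mu_s\le 1$ through Cauchy--Schwarz.
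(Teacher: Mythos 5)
Your outline is essentially sound, and its core coincides with what the paper actually proves, but the scaffolding differs in two genuine ways. First, the paper does not take the cited expansion (\ref{ef_exp_intro}) as a black box and does not invoke the Kac--Simon multiplicity theorem $N(E)=1$ at all: it rebuilds the expansion from scratch by forming the bounded-trace NOVM $\Theta(B)=A^{-1}P(B)A^{-1}$ with $(A\psi)(n)=n\psi(n)$ (Claim \ref{NOVM_claim}), differentiating it with respect to $\mu$ via Proposition \ref{bdd_trace_eig_exp_thm}, and reading off the densities $\langle\Psi(E)\delta_k,\delta_j\rangle$ as Radon--Nikodym derivatives of the $\mu_{kj}$ for \emph{all} $k,j\in\mathbb{Z}$. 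The rank-one structure on the singular part is then obtained directly, in Proposition \ref{L_prop}, simultaneously with the identification of the direction as the subordinate solution, so multiplicity one comes out as a by-product rather than an input. Your route (multiplicity theorem $\Rightarrow$ rank-one density $\mathbf{W}=\xi\xi^{*}$, trace normalization $\Rightarrow$ $\left|u_E(0)\right|^2+\left|u_E(1)\right|^2=1$) is a legitimate shortcut that buys brevity at the cost of self-containedness; note though that the $2\times2$ block alone does not give the expansion --- you still need the densities $\frac{d\mu_{kj}}{d\mu}(E)=u_E(k)u_E(j)$ for all $k,j$, i.e.\ that the full density is rank one \emph{and} that its column is a genuine solution of (\ref{ev_eq_intro}), which is exactly what the cited Berezanski\u{i} theorem, or the paper's appendix, supplies. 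Second, your weighted-space extension (testing against $\delta_n$ to get $\int_S\left|u_E(n)\right|^2d\mu_s\leq1$, then Tonelli and Cauchy--Schwarz against the weight) is correct and is essentially the paper's trace bound $\tr\Theta(B)\leq\sum n^{-2}$ read a posteriori instead of a priori; in the paper the weight is built in from the start and yields convergence of (\ref{weak_conv_int_eq}) in the norm of $\mathcal{B}\left(\mathcal{H}_+,\mathcal{H}_-\right)$, with the pairing being the $\mathcal{H}_+$/$\mathcal{H}_-$ duality rather than the inner product of a single weighted space, as you half-state.

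The step you correctly flag as the heart is where your sketch is thinnest, and one tool must be named for it to close: Poltoratskii's theorem (Theorem \ref{pol_thm}). The density $\mathbf{W}$ is a measure-theoretic object, while your Weyl-solution analysis lives on nontangential boundary values of Borel transforms; the identity $\underset{\epsilon\to0}{\lim}M_{kj}(E+i\epsilon)/M(E+i\epsilon)=\frac{d\mu_{kj}}{d\mu}(E)$ $\mu_s$-a.e.\ is precisely Poltoratskii, and without it your ``comparison with $\xi(E)\xi(E)^{*}$'' has no justification. Moreover, ``both Weyl solutions collapse onto the subordinate solution'' is a heuristic: the appendix replaces it with the fact that $\lim\left|M(E+i\epsilon)\right|=\infty$ $\mu_s$-a.e.\ (Theorem \ref{supp_thm}) forces the limiting columns to solve the difference equation via the explicit inverse of the truncated Green's matrix (Lemma \ref{Schur_lemma}), followed by a case analysis on $\lim m_{+n}(E+i\epsilon)$ and the criterion (\ref{JL_hl_sub_thm}) to conclude subordinacy (including the degenerate case where the limiting column vanishes identically). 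With Poltoratskii cited and that case analysis carried out, your proposal becomes a complete proof along mildly different lines.
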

	\begin{remark}
		A certain version of Theorem \ref{main_thm_1_intro} remains valid for a general Borel measure $\rho$ which is in the spectral measure class of $J$. The only modification is in the normalization of the subordinate solutions.
	\end{remark}
	We will exploit Theorem \ref{main_thm_1_intro} in order to obtain a decomposition of $J$ into half-line operators. In general, the decomposition of Jacobi operators on graphs has drawn a lot of attention and proved useful in many cases. A very non-comprehensive list of works in that area includes \cite{Br,Br1,BrKe,BrLe,KN,NS,Sol}. Generally, such decompositions are obtained when the underlying operator is the adjacency matrix (namely $a_n=1$ and $b_n=0$ for all $n\in\mathbb{Z}$), and the underlying graph possesses some strong symmetry properties. In these cases, the operator is decomposed into a direct sum of Jacobi operators on $\mathbb{N}$. In this work, we describe a decomposition for general bounded self-adjoint Jacobi operators on $\mathbb{Z}$ with no symmetry assumptions on the potential. This is the first result of this kind as far as we know. The price that is paid for this generality is that the decomposition is no longer given by a direct sum, but as a certain integral of operators.
	
	Let us denote by $J_0^\pm$ the restriction of $J$ to $\mathbb{Z}_\pm$, where $\mathbb{Z}_+=\left\{1,2,\ldots\right\}$ and $\mathbb{Z}_-=\mathbb{Z}\setminus\mathbb{Z}_+$. For any $\theta\in\left[0,\pi\right)$, we denote
	\begin{center}
		$J_\theta\coloneqq\left(J_0^--\cot\theta\langle\delta_0,\cdot\rangle\delta_0\right)\oplus\left(J_0^+-\tan\theta\langle\delta_1,\cdot\rangle\delta_1\right)$.
	\end{center}
	Note that for any $\theta\in\left[0,\pi\right)$, $J_\theta$ is the direct sum of two half-line Jacobi operators. For any self-adjoint operator $T$, we denote by $T_s$ the singular part of $T$ (again, we refer the reader to Subsection \ref{prelim_spectral_thm} for precise definitions). Our first result is the following decomposition theorem:
	\begin{theorem}\label{main_thm_3_intro}
		There exists a projection-valued measure $Q$ which depends on $J$ and which is defined on Borel subsets of $\left[0,\pi\right)$, such that
		\begin{equation}\label{decomp_eq_intro}
			\int_0^\pi J_\theta dQ\left(\theta\right)=J_s.
		\end{equation}
	\end{theorem}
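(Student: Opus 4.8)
The plan is to realize the decomposition (\ref{decomp_eq_intro}) as a direct integral built on top of the eigenfunction expansion from Theorem \ref{main_thm_1_intro}. The key observation is that each $J_\theta$ is, by construction, a self-adjoint operator whose eigenvalue equation coincides with (\ref{ev_eq_intro}) away from the decoupling site: a solution $u$ of (\ref{ev_eq_intro}) on all of $\mathbb{Z}$ restricts to a solution of the half-line equation for $J_0^-$ and for $J_0^+$, and the boundary terms generated by the rank-one perturbations $-\cot\theta\langle\delta_0,\cdot\rangle\delta_0$ and $-\tan\theta\langle\delta_1,\cdot\rangle\delta_1$ are precisely the terms needed to close the recurrence at $n=0$ and $n=1$. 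Concretely, a nonzero vector $(u(0),u(1))$ determines, via the angle $\theta=\theta(E)$ for which $\cot\theta$ and $\tan\theta$ match the boundary data of the subordinate solution $u_E$, an eigenvalue $E$ of the self-adjoint operator $J_\theta$. Thus I would first establish the pointwise correspondence $E\mapsto\theta(E)$ sending each $E\in S$ to the unique $\theta\in[0,\pi)$ such that $u_E$ is an eigenfunction of $J_{\theta(E)}$ with eigenvalue $E$.

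\medskip

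Next I would define the projection-valued measure $Q$ by pushing forward the singular spectral measure $\mu_s$ under this map. Precisely, for a Borel set $\Omega\subseteq[0,\pi)$ I would set $Q(\Omega)=P_s\big(\theta^{-1}(\Omega)\big)$, where $\theta^{-1}(\Omega)=\{E\in S:\theta(E)\in\Omega\}$. One must check that $\theta:S\to[0,\pi)$ is Borel measurable so that $\theta^{-1}(\Omega)$ is a Borel subset of $\mathbb{R}$; this follows because $\theta(E)$ is determined by the boundary values $(u_E(0),u_E(1))$ of the subordinate solution, which by Theorem \ref{main_thm_1_intro} can be chosen measurably in $E$. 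Since $P_s$ is a projection-valued measure and $\theta^{-1}$ respects countable unions and complements, $Q$ is automatically a projection-valued measure on $[0,\pi)$.

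\medskip

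With $Q$ in hand, the identity (\ref{decomp_eq_intro}) should follow by integrating the eigenfunction expansion of Theorem \ref{main_thm_1_intro} against the operators $J_\theta$. The crucial pointwise input is that $J_{\theta(E)}u_E=Eu_E$, which lets one replace the scalar $E$ inside the spectral integral $J_s=\int_\mathbb{R} E\,dP_s(E)$ by the action of the operator $J_{\theta(E)}$ on the rank-one projection $\langle u_E,\cdot\rangle u_E$. I would verify this on the dense set $\ell^2(\mathbb{Z};n^2)$ supplied by Theorem \ref{main_thm_1_intro}: for $\psi$ in this space,
\begin{equation*}
J_s\psi=\int_S E\langle u_E,\psi\rangle u_E\,d\mu_s(E)=\int_S \langle u_E,\psi\rangle\big(J_{\theta(E)}u_E\big)\,d\mu_s(E),
\end{equation*}
and then recognizing the right-hand side as $\int_0^\pi J_\theta\,dQ(\theta)$ applied to $\psi$ via the change of variables $E\mapsto\theta(E)$. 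The main obstacle I expect is making rigorous sense of the object $\int_0^\pi J_\theta\,dQ(\theta)$ as a genuine direct integral of the unbounded-boundary-condition operators $J_\theta$ and checking that the $J_\theta$ are uniformly controlled so that the integral defines a bounded self-adjoint operator agreeing with $J_s$; this requires care precisely because $\cot\theta$ and $\tan\theta$ blow up as $\theta\to 0^+$ or $\theta\to(\pi/2)$, so one must confirm that the set of angles where the perturbation is large carries no $Q$-mass, or else interpret the fiber operators on the correct domains. Handling this degeneration, together with the measurability and change-of-variables bookkeeping, is the technical heart of the argument.
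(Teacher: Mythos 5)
Your outline coincides with the paper's strategy: the paper also defines $Q$ as the pushforward of the spectral measure under $E\mapsto\theta\left(E\right)$, setting $Q\left(B\right)=P\left(S_B\right)$ with $S_B=\underset{\theta\in B}{\bigcup}S_\theta\cap\widetilde{S}$, and gets measurability exactly the way you suggest, since by Proposition \ref{L_prop} the boundary data of $u_E$ is a Radon--Nikodym derivative ($\frac{d\mu_{00}}{d\mu}$), hence Borel in $E$ (see (\ref{measurability_eq})). One caution on your first step: $u_E$ is only a subordinate solution, not in general an element of $\ell^2\left(\mathbb{Z}\right)$, so ``$E$ is an eigenvalue of the self-adjoint operator $J_{\theta\left(E\right)}$'' is false whenever the singular spectrum is continuous; what is true, and all that is used, is the formal difference-equation identity, which enters through $\left(J_\theta-J\right)\psi=\left(\psi\left(1\right)+\cot\theta\,\psi\left(0\right)\right)\delta_0+\left(\psi\left(0\right)+\tan\theta\,\psi\left(1\right)\right)\delta_1$ as in (\ref{diff_J_Jtheta}).

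The genuine gap is in your last step. ``Recognizing the right-hand side as $\int_0^\pi J_\theta\,dQ\left(\theta\right)$'' is precisely what must be proved, because the integral in (\ref{decomp_eq_intro}) is defined as the limit of Riemann--Stieltjes sums $\sum_j J_{t_j}Q\left(\left[x_{j-1},x_j\right)\right)$; your pointwise formula under the change of variables does not by itself give convergence of these sums. The paper supplies the missing quantitative input (Theorem \ref{splitting_thm}): for $\theta$ and $\theta\left(E\right)$ in a common cell $\left[\alpha,\beta\right)$ inside a compact subinterval of $\left(0,\frac{\pi}{2}\right)$, the eigenfunction expansion of Theorem \ref{main_thm_1_intro} turns the boundary term into $\int_{S_{\left[\alpha,\beta\right]}}\left(\tan\theta-\tan\theta\left(E\right)\right)\langle\psi,u_E\rangle u_E\left(1\right)d\mu\left(E\right)$ (and its $\cot$ analogue), which the Lipschitz bounds for $\tan$ and $\cot$ on $\left[a,b\right]$ control by $\gamma\left|\beta-\alpha\right|\|Q\left(\left[\alpha,\beta\right)\right)\psi\|$; a Cauchy--Schwarz summation over the cells (Lemma \ref{int_lemma}) then yields convergence to $JQ\left(\left[a,b\right)\right)$. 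Moreover, your proposed fix for the degeneration at $\theta\in\left\{0,\frac{\pi}{2}\right\}$ --- confirming ``that the set of angles where the perturbation is large carries no $Q$-mass'' --- would fail: $Q$ can certainly charge these angles (an eigenvector with $\psi\left(1\right)=0$ produces an atom of $Q$ at $\frac{\pi}{2}$). The paper instead exhausts $\left(0,\frac{\pi}{2}\right)$ and $\left(\frac{\pi}{2},\pi\right)$ by compact subintervals, passes to strong limits, and handles the two possible atoms separately via $J|_{\text{Ran}Q\left(\left\{\theta\right\}\right)}=J_\theta|_{\text{Ran}Q\left(\left\{\theta\right\}\right)}$, with $J_{\frac{\pi}{2}}$ given by the shifted-sequence convention (Theorem \ref{integral_thm}); the identification of $JQ\left(I\right)$ with $J_s$ then comes from Lemma \ref{singular_part_lemma}, using that $\widetilde{S}$ may be taken of zero Lebesgue measure. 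Without the Lipschitz estimate, the summation lemma, and the atom treatment, your computation records the correct formula but does not establish (\ref{decomp_eq_intro}).
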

	Theorem \ref{main_thm_3_intro} follows from the following
	\begin{theorem}\label{main_thm_2_intro}
		The measure $Q$ commutes with $J$, and for any $a,b\in\left(0,\frac{\pi}{2}\right)$ there exists $\gamma>0$ such that for any $\alpha,\beta\in\left(a,b\right)$ and for any $\psi\in\ell^2\left(\mathbb{Z};n^2\right)$,
		\begin{equation}
			\|J_sQ\left(\left[\alpha,\beta\right)\right)\psi-J_\theta Q\left(\left[\alpha,\beta\right)\right)\psi\|<\gamma\left|\beta-\alpha\right|\|Q\left(\left[\alpha,\beta\right)\right)\psi\|
		\end{equation}
	\end{theorem}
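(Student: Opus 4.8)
The plan is to reduce the operator inequality to a two-dimensional boundary estimate at the bond $(0,1)$ and then to feed in the eigenfunction expansion of Theorem \ref{main_thm_1_intro}.

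First I would construct $Q$. Using the $\mu_s$-measurable choice $E\mapsto u_E$ of normalized subordinate solutions supplied by Theorem \ref{main_thm_1_intro}, I define $\theta\colon S\to[0,\pi)$ to be the angle for which $u_E$ is a generalized eigenfunction of $J_{\theta(E)}$; concretely $\theta(E)$ is read off from the boundary data $\bigl(u_E(0),u_E(1)\bigr)$ so that the decoupling-plus-boundary modification at the bond $(0,1)$ annihilates it. Since $u_E$ depends measurably on $E$, so does $\theta$, and I set $Q(A):=P_s\bigl(\theta^{-1}(A)\bigr)$ for Borel $A\subseteq[0,\pi)$. Each $\theta^{-1}(A)$ is a Borel subset of $\mathbb{R}$, so $Q(A)$ is a spectral projection of $J$; hence $Q$ is a projection-valued measure that commutes with $J$ and satisfies $Q(A)\le P_s$.

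Next comes the reduction. For a fixed $\theta\in[\alpha,\beta)$ put $\phi:=Q([\alpha,\beta))\psi$, so that $\phi\in\operatorname{Ran}P_s$ and $J_s\phi=JP_s\phi=J\phi$. Therefore
\[
J_sQ([\alpha,\beta))\psi-J_\theta Q([\alpha,\beta))\psi=(J-J_\theta)\phi=:R_\theta\phi .
\]
The operator $R_\theta=J-J_\theta$ is supported on $\operatorname{sp}\{\delta_0,\delta_1\}$; as a $2\times2$ matrix in this basis it has off-diagonal entry $a_0$, \emph{independent of} $\theta$, and diagonal entries $p_0(\theta),p_1(\theta)$ that are smooth in $\theta$ and built from $\cot\theta$ and $\tan\theta$. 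Two features are decisive: (i) by the definition of $\theta(E)$, the vector $\bigl(u_E(0),u_E(1)\bigr)$ lies in $\ker R_{\theta(E)}$; and (ii) since the off-diagonal entry does not depend on $\theta$, the difference $R_\theta-R_{\theta(E)}$ is \emph{diagonal}. Now I insert the expansion: by Theorem \ref{main_thm_1_intro}, for $j\in\{0,1\}$,
\[
\phi(j)=\int_{\theta^{-1}([\alpha,\beta))}\langle u_E,\psi\rangle\,u_E(j)\,d\mu_s(E),\qquad \|\phi\|^2=\int_{\theta^{-1}([\alpha,\beta))}\bigl|\langle u_E,\psi\rangle\bigr|^2 d\mu_s(E).
\]
Writing $R_\theta\bigl(u_E(0),u_E(1)\bigr)=(R_\theta-R_{\theta(E)})\bigl(u_E(0),u_E(1)\bigr)$ and using (i)–(ii), the $j$-th coordinate of $R_\theta\phi$ equals $\int\bigl(p_j(\theta)-p_j(\theta(E))\bigr)u_E(j)\,\langle u_E,\psi\rangle\,d\mu_s$. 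For $\alpha,\beta\in(a,b)\subset(0,\tfrac{\pi}{2})$ the functions $p_0,p_1$ are Lipschitz on $[a,b]$, so $\bigl|p_j(\theta)-p_j(\theta(E))\bigr|\le L|\beta-\alpha|$, and Cauchy–Schwarz combined with the identity
\[
\int_{\theta^{-1}([\alpha,\beta))}|u_E(j)|^2\,d\mu_s=\|Q([\alpha,\beta))\delta_j\|^2\le1
\]
bounds each coordinate by $L|\beta-\alpha|\,\|\phi\|$. Summing the two coordinates yields $\|R_\theta\phi\|\le\gamma|\beta-\alpha|\,\|\phi\|$ with $\gamma$ depending only on $a,b$.

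The main obstacle is step (i): proving that the subordinate solution is genuinely a generalized eigenfunction of $J_{\theta(E)}$, i.e.\ that $R_{\theta(E)}$ is singular with $\bigl(u_E(0),u_E(1)\bigr)$ in its kernel. This is exactly where the reciprocal $\cot$–$\tan$ pairing in the definition of $J_\theta$ and the matching of the two half-line boundary conditions across the bond $(0,1)$ must be verified; once the boundary matrix is shown to be singular with the correct kernel, the remaining estimates are the routine Lipschitz-plus-Cauchy–Schwarz bounds above, and the diagonality of $R_\theta-R_{\theta(E)}$ is what lets the projection bound $\|Q([\alpha,\beta))\delta_j\|\le1$ replace a potentially unbounded factor $\mu_s(\theta^{-1}([\alpha,\beta)))^{1/2}$. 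A secondary point requiring care is the restriction $\alpha,\beta\in(a,b)\subset(0,\tfrac{\pi}{2})$, which keeps the derivatives of $\cot$ and $\tan$, hence $\gamma$, bounded away from the singularities at $0$ and $\tfrac{\pi}{2}$.
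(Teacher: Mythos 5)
Your proposal is correct and follows essentially the same route as the paper: there, $Q(B)=P(S_B)$ is defined through the boundary angle of the subordinate solution (measurability via the Radon--Nikodym derivative $\frac{d\mu_{00}}{d\mu}$), the expansion of Theorem \ref{main_thm_1_intro} is inserted, and the substitution $u_E(0)=-\tan\theta(E)\,u_E(1)$ --- which is exactly your kernel condition (i), and holds by definition of $\theta(E)$ through the boundary condition (\ref{bc_eq_hl}) together with the reciprocal $\cot$/$\tan$ pairing (under the paper's implicit normalization $a_0=1$ in (\ref{ev_eq_line}) and (\ref{diff_J_Jtheta})) --- is followed by the same Lipschitz bound on $\tan$ and $\cot$ over $[a,b]$. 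The step you flag as the main obstacle is thus immediate from the definitions, and your Cauchy--Schwarz estimate using $\|Q([\alpha,\beta))\delta_j\|\le 1$ is only a cosmetic variant of the paper's direct identification of the integral as the $j$-th coordinate of $Q([\alpha,\beta))\psi$.
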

	\begin{remark}
		\begin{enumerate}
			\item The decomposition (\ref{decomp_eq_intro}) is given by the strong limit of the Riemann-Stieltjes sums.
			\item Theorems \ref{main_thm_3_intro} and \ref{main_thm_2_intro} actually hold when replacing $J$ and $J_\theta$ with $F\left(J\right)$ and $F\left(J_\theta\right)$ for certain functions $F$. The stronger versions of these theorems are given in Theorems \ref{splitting_thm} and \ref{integral_thm}.
		\end{enumerate}
	\end{remark}
	The rest of the paper is structured as follows. In Section $2$, we present some preliminary background in the one-dimensional theory of Jacobi operators and some basic measure-theoretic background. In Section $3$, we present the eigenfunction expansion of Jacobi operators in $\ell^2\left(\mathbb{Z}\right)$ and in particular, prove Theorem \ref{main_thm_1_intro}. In Section $4$, we define the measure $Q$ and the collection $\left(J_\theta\right)_{\theta\in\left[0,\pi\right)}$ mentioned in Theorems \ref{main_thm_2_intro} and \ref{main_thm_3_intro}, and prove these theorems. Section $5$ contains some examples and further discussion.
	
	{\bf Acknowledgments} I would like to thank Jonathan Breuer for useful discussions.
	\section{Preliminaries}
	\subsection{The spectral theorem and operator-valued measures}\label{prelim_spectral_thm}
	Let $\mathcal{H}$ be a Hilbert space and let \mbox{$\Theta:\text{Borel}\left(\mathbb{R}\right)\to\mathcal{B}\left(\mathcal{H}\right)$}. $\Theta$ will be called a {\it nonnegative operator-valued measure} (NOVM), if $\Theta\left(\emptyset\right)=0$, $\Theta\left(B\right)> 0$ for any Borel set $B$, and for any collection of disjoint Borel set $\left(B_j\right)_{j\in\mathbb{N}}$, one has
	\begin{center}
		$\Theta\left(\underset{n\in\mathbb{N}}{\bigcup}B_j\right)=\sum\limits_{j=1}^\infty\Theta\left(B_j\right)$
	\end{center}
	where the RHS converges weakly to the LHS. If  for any two Borel sets $B_1,B_2$, $\Theta\left(B_1\right)$ is an orthogonal projection and in addition, $\Theta\left(B_1\cap B_2\right)=\Theta\left(B_1\right)\Theta\left(B_2\right)$, then we will say that $\Theta$ is a {\it projection-valued measure} (PVM). We will also be interested in NOVMs which have {\it bounded trace}.
	\begin{definition}
		We say that a NOVM $\Theta$ has bounded trace if and only if for any bounded Borel set $B\subseteq\mathbb{R}$, for any orthonormal basis of $\mathcal{H}$, $\left(e_n\right)_{n=1}^{\infty}$
		\begin{center}
			$\tr\Theta\left(B\right)\coloneqq\sum\limits_{n=1}^{\infty}\langle\Theta\left(B\right)e_j,e_j\rangle<\infty$.
		\end{center}
	\end{definition}
	\begin{remark}
		The trace of a nonnegative operator does not depend on the choice of the orthonormal basis.
	\end{remark}
	Throughout this work we will discuss certain limits which are called Riemann-Stieltjes integrals. These are integrals of operator-valued functions w.r.t.\ either PVMs or standard Borel measures. We present the case of a PVM here. Let us fix a $PVM$ $P$ and a $\mathcal{B}\left(\mathcal{H}\right)$-valued function \mbox{$F:\left[a,b\right]\to\mathcal{B}\left(\mathcal{H}\right)$}. Given a partition  of $\left[a,b\right]$ $\Delta=\left(x_0=a,x_1,\ldots,x_n=b\right)$ and a selection of points $\Delta^*=\left(t_1,\ldots,t_n\right)$ such that $t_i\in\left[x_{i-1},x_i\right)$, The Riemann Stieltjes sum of $F$ w.r.t.\ $\Delta$ and $\Delta^*$ is defined by
	\begin{center}
		$RS\left(F,\Delta,\Delta^*\right)\coloneqq\sum\limits_{k=1}^{n}F\left(t_j\right)P\left(\left[x_{j-1},x_j\right)\right)$.
	\end{center}
	We will say that $S\in\mathcal{B}\left(\mathcal{H}\right)$ is the Riemann-Stieltjes integral of $F$ w.r.t.\ $P$ if the Riemann-Stieltjes sums converge (in operator norm) to $S$ as $\left|\Delta\right|\to0$, where \mbox{$\left|\Delta\right|=\underset{i=1,\ldots,n}\max\left|x_i-x_{i-1}\right|$}. If this is the case, we also say that $F$ is integrable on $\left[a,b\right]$ and we denote $S=\int_a^b F\left(t\right)dP\left(t\right)$.
	\begin{remark}
		Given $a,b\in\mathbb{R}$, if $F$ is integrable on any interval $\left[a',b'\right]\subset\left(a,b\right)$, then we define
		\begin{center}
			$\int_{a}^{b}F\left(t\right)dP\left(t\right)\coloneqq\underset{a'\to a,b'\to b}{\slim}\int_{a'}^{b'}F\left(t\right)dP\left(t\right)$
		\end{center}
		if the limit on the RHS exists.
	\end{remark}
	The following is the well known spectral theorem (see, e.g.\ \cite[Theorem VII.8]{RS}).
	\begin{theorem}\label{spec_thm}
		Let $T:\mathcal{H}\to\mathcal{H}$ be a self-adjoint operator. Then there exists a projection-valued measure, $P_T$, such that for any continuous function $f:\sigma\left(T\right)\to\mathbb{C}$,
		\begin{equation}\label{spec_thm_eq}
			\int_{\sigma\left(T\right)}f\left(\lambda\right)dP_T\left(\lambda\right)=f\left(T\right),
		\end{equation}
		where the RHS is taken to be the limit, in operator norm, of the Riemann-Stieltjes sums. $P_T$ is called the spectral measure of $T$.
	\end{theorem}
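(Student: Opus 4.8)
The plan is to build the functional calculus in stages---beginning with polynomials and culminating in a projection-valued measure---and only at the very end to verify the operator-norm convergence of the Riemann--Stieltjes sums. Since the operators of interest here are bounded, I assume $T$ is bounded, so that $\sigma\left(T\right)$ is a compact subset of $\mathbb{R}$; the unbounded case would be reduced to this one via a Cayley or bounded transform. First I would establish the polynomial functional calculus. For a polynomial $p$, the spectral mapping theorem gives $\sigma\left(p\left(T\right)\right)=p\left(\sigma\left(T\right)\right)$, and since $\overline{p}p$ is real-valued and $\left(\overline{p}p\right)\left(T\right)$ is self-adjoint, one has $\|p\left(T\right)\|^2=\|\left(\overline{p}p\right)\left(T\right)\|=\sup_{\lambda\in\sigma\left(T\right)}\overline{p\left(\lambda\right)}p\left(\lambda\right)=\|p\|_\infty^2$, using that the norm of a self-adjoint operator equals its spectral radius. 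This isometry lets me extend $p\mapsto p\left(T\right)$ to a $*$-homomorphism $f\mapsto f\left(T\right)$ from $C\left(\sigma\left(T\right)\right)$ into $\mathcal{B}\left(\mathcal{H}\right)$, preserving the sup-norm, by density of polynomials via Stone--Weierstrass.

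Next I would produce the spectral measures. For each $\psi\in\mathcal{H}$ the functional $f\mapsto\langle\psi,f\left(T\right)\psi\rangle$ is positive and bounded on $C\left(\sigma\left(T\right)\right)$, so by the Riesz representation theorem there is a finite positive Borel measure $\mu_\psi$ with $\langle\psi,f\left(T\right)\psi\rangle=\int f\,d\mu_\psi$; polarization then yields complex measures $\mu_{\phi,\psi}$ representing $\langle\phi,f\left(T\right)\psi\rangle$. I would extend the calculus to bounded Borel functions by defining $f\left(T\right)$ through the sesquilinear form $\left(\phi,\psi\right)\mapsto\int f\,d\mu_{\phi,\psi}$, which is bounded by $\|f\|_\infty$ and hence determines a unique bounded operator.

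With this in hand I would set $P_T\left(B\right)\coloneqq\chi_B\left(T\right)$ for Borel $B$. Since $\chi_B$ is real and idempotent, the homomorphism property forces $P_T\left(B\right)$ to be an orthogonal projection with $P_T\left(B_1\cap B_2\right)=P_T\left(B_1\right)P_T\left(B_2\right)$, and countable additivity (in the weak, hence strong, sense) follows from dominated convergence applied to the measures $\mu_{\phi,\psi}$. Thus $P_T$ is a projection-valued measure. Finally, to match the specific assertion of the theorem, I would check that for continuous $f$ the Riemann--Stieltjes sums converge to $f\left(T\right)$ in operator norm: given a partition $\Delta$ of an interval containing $\sigma\left(T\right)$, the sum $\sum_j f\left(t_j\right)P_T\left(\left[x_{j-1},x_j\right)\right)$ is exactly $g_\Delta\left(T\right)$ for the step function $g_\Delta=\sum_j f\left(t_j\right)\chi_{\left[x_{j-1},x_j\right)}$, so by the isometry of the calculus $\|f\left(T\right)-g_\Delta\left(T\right)\|=\|f-g_\Delta\|_\infty\to0$ as $|\Delta|\to0$ by uniform continuity of $f$ on the compact set $\sigma\left(T\right)$.

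The main obstacle I anticipate is not any single estimate but the bookkeeping in the Borel extension step: verifying that the form $\int f\,d\mu_{\phi,\psi}$ is genuinely sesquilinear and uniformly bounded, and---more delicately---that the homomorphism property passes from continuous functions to Borel functions, so that $\chi_{B_1}\left(T\right)\chi_{B_2}\left(T\right)=\chi_{B_1\cap B_2}\left(T\right)$ holds at the operator level. This multiplicativity is what makes the $P_T\left(B\right)$ genuine projections, and establishing it requires a monotone-class or approximation argument rather than a direct computation.
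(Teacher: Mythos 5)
Your proof is correct and is essentially the standard argument: the paper does not prove Theorem \ref{spec_thm} itself but cites \cite[Theorem VII.8]{RS}, and your route---polynomial calculus via the spectral radius, Stone--Weierstrass, Riesz representation plus polarization, Borel extension, $P_T\left(B\right)=\chi_B\left(T\right)$, and finally operator-norm convergence of the Riemann--Stieltjes sums by writing each sum as $g_\Delta\left(T\right)$ for a step function uniformly close to $f$---is precisely the proof given in that reference. Three cosmetic repairs: the Borel calculus is only a contraction, $\left\|g\left(T\right)\right\|\leq\sup\left|g\right|$ (equality can fail for step functions, but the inequality is all your final estimate needs); since $f$ is defined only on $\sigma\left(T\right)$ while the tags $t_j$ range over an interval containing it, you should fix a continuous (Tietze) extension of $f$ and note that the limit is independent of the extension because $P_T$ is supported on $\sigma\left(T\right)$; and your standing boundedness assumption is automatic here, since $T:\mathcal{H}\to\mathcal{H}$ is everywhere defined and self-adjoint, hence bounded by Hellinger--Toeplitz.
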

	Given $\varphi,\psi\in\mathcal{H}$, their joint spectral measure w.r.t.\ $T$ is a Borel measure which is given by
	\begin{center}
		$\mu_{\varphi,\psi}\left(B\right)=\langle P_T\left(B\right)\varphi,\psi\rangle$
	\end{center}
	for any Borel set $B$. $\mu_{\varphi,\psi}^T$. We will write $\mu_\varphi^T$ when $\varphi=\psi$. and  we will omit the $T$ when it is clear from context. Let us now define the singular and absolutely continuous parts of a self-adjoint operator and of its spectral measure. To that end, let
	\begin{center}
		$\mathcal{H}_{\text{s}}=\left\{u\in\mathcal{H}:\mu_u\text{ is singular w.r.t.\ the Lebesgue measure}\right\}$,\\
		$\mathcal{H}_{\text{ac}}=\left\{u\in\mathcal{H}:\mu_u\text{ is absolutely continuous w.r.t.\ the Lebesgue measure}\right\}$.
	\end{center}
	It is well known (see, e.g. \cite[Theorem VII.4]{RS}) that $\mathcal{H}=\mathcal{H}_{\text{s}}\oplus\mathcal{H}_{\text{ac}}$, and that each of these subspaces is invariant under $T$. We will denote by $T_\bullet$ the operator $T|_{\mathcal{H}_\bullet}$ and by $\left(P_T\right)_\bullet$ its spectral measure, where $\bullet\in\left\{\text{s},\text{ac}\right\}$. We will omit the $T$ when it is clear from context. We will also be interested in scalar spectral measures of $T$, which are measures that are in the {\it spectral measure class} of $T$.
	\begin{definition}\label{measure_class_def}
		Let $\Theta$ be a NOVM and let $\rho$ be a Borel measure. We will say that $\rho$ is in the {\it measure class} of $\Theta$ if for any Borel set $B$, $\rho\left(B\right)=0\iff \Theta\left(B\right)=0$. Given a self-adjoint operator $T$, we will say that $\rho$ is in the {\it spectral measure class} of $T$ if it is in the measure class of $P_T$.
	\end{definition}
	We will also need the notion of a set which {\it supports} a measure. We define it here for scalar Borel measures. The definition for NOVMs is similar.
	\begin{definition}
		Let $\mu$ be a Borel measure. We will say that a Borel set $B\subseteq\mathbb{R}$ {\it supports} $\mu$ (or that $\mu$ is {\it supported} by $B$) if $\mu\left(\mathbb{R}\setminus B\right)=0$.
	\end{definition}
	\begin{lemma}\label{singular_part_lemma}
		Let $T:\mathcal{H}\to\mathcal{H}$ be a self-adjoint operator and let $\mu$ be a Borel measure which is in the spectral measure class of $T$. Let $A\subseteq\mathbb{R}$ be a Borel set of Lebesgue measure zero which supports $\mu_s$. Then $T_s=TP\left(A\right)$.
	\end{lemma}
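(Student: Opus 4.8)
The plan is to prove the stronger and more transparent fact that $P(A)$ is exactly the orthogonal projection onto the singular subspace $\mathcal{H}_{\text{s}}$; once this is known, the identity $T_s=TP(A)$ is immediate. Write $P=P_T$. The first observation I would record is that, because $\mu$ is in the spectral measure class of $T$, every scalar measure $\mu_u(\cdot)=\langle P(\cdot)u,u\rangle$ satisfies $\mu_u\ll\mu$: if $\mu(B)=0$ then $P(B)=0$ and hence $\mu_u(B)=0$. I will combine this with the two hypotheses on $A$, namely that $A$ is a Lebesgue-null set and that $\mu_s(\mathbb{R}\setminus A)=0$.

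First I would establish the easy inclusion $\mathrm{Ran}\,P(A)\subseteq\mathcal{H}_{\text{s}}$. If $u=P(A)u$, then using that spectral projections commute and $P(A)P(B)=P(A\cap B)$, one computes $\mu_u(B)=\langle P(A\cap B)u,u\rangle=\mu_u(A\cap B)$ for every Borel set $B$, so $\mu_u$ is supported on $A$. Since $A$ is Lebesgue-null, $\mu_u$ is singular and therefore $u\in\mathcal{H}_{\text{s}}$.

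The crux is the reverse inclusion $\mathcal{H}_{\text{s}}\subseteq\mathrm{Ran}\,P(A)$, equivalently $P(\mathbb{R}\setminus A)u=0$ for every $u\in\mathcal{H}_{\text{s}}$; this is the only step where the supporting property of $A$ and the measure-class hypothesis are genuinely needed, and it is where I expect the real work to lie. Given $u\in\mathcal{H}_{\text{s}}$, I would set $v=P(\mathbb{R}\setminus A)u$, so that $\mu_v$ is the restriction of $\mu_u$ to $\mathbb{R}\setminus A$; in particular $\mu_v\le\mu_u$ and $\mu_v\ll\mu$. On the one hand, since $\mu_u$ is carried by a Lebesgue-null set and $\mu_v\le\mu_u$, the measure $\mu_v$ is singular with respect to the Lebesgue measure. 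On the other hand, $\mu_v$ is carried by $\mathbb{R}\setminus A$, and there $\mu$ coincides with its absolutely continuous part because $\mu_s(\mathbb{R}\setminus A)=0$; concretely, if $\mu(E\cap(\mathbb{R}\setminus A))=0$ then $\mu_v(E)=\mu_v(E\cap(\mathbb{R}\setminus A))=0$ by $\mu_v\ll\mu$, so $\mu_v\ll\mu|_{\mathbb{R}\setminus A}=\mu_{\mathrm{ac}}|_{\mathbb{R}\setminus A}\ll$ Lebesgue measure. A measure that is simultaneously singular and absolutely continuous must vanish, so $\mu_v=0$, whence $\|v\|^2=\mu_v(\mathbb{R})=0$ and $v=0$.

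Combining the two inclusions yields $\mathrm{Ran}\,P(A)=\mathcal{H}_{\text{s}}$, i.e.\ $P(A)$ is the orthogonal projection onto $\mathcal{H}_{\text{s}}$ (and consequently $P(\mathbb{R}\setminus A)$ is the projection onto $\mathcal{H}_{\text{ac}}$). Since the spectral projections commute with $T$ and $\mathcal{H}_{\text{s}}$ is $T$-invariant, the operator $TP(A)$ acts as $T$ on $\mathcal{H}_{\text{s}}$ and as $0$ on $\mathcal{H}_{\text{ac}}$; under the usual identification of $T_s=T|_{\mathcal{H}_{\text{s}}}$ with its extension by zero to all of $\mathcal{H}$, this is precisely $T_s$, which completes the argument. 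The only delicate point is the reverse inclusion; the remaining steps are routine manipulations with commuting spectral projections and the Lebesgue decomposition.
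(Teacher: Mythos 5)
Your argument is correct and follows essentially the same route as the paper: the paper's one-line proof consists precisely of your easy inclusion --- the computation $\|P\left(A\right)\psi\|^2=\langle P\left(A\right)\psi,\psi\rangle=\mu_\psi\left(A\right)=0$ for $\psi\in\mathcal{H}_{\mathrm{ac}}$, which uses only that $A$ is Lebesgue-null --- with the remaining half declared to follow immediately. Your reverse inclusion, showing that $v=P\left(\mathbb{R}\setminus A\right)u$ has $\mu_v$ simultaneously singular (since $\mu_v\leq\mu_u$) and absolutely continuous (since $\mu_v\ll\mu|_{\mathbb{R}\setminus A}=\mu_{\mathrm{ac}}|_{\mathbb{R}\setminus A}$), hence $\mu_v=0$, is exactly the step the paper leaves implicit; it is the only place where the hypotheses that $A$ supports $\mu_s$ and that $\mu$ is in the spectral measure class of $T$ are actually used, so spelling it out as you do makes yours the more complete rendering of the same proof.
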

	\begin{proof}
		This follows immediately from the fact that for any $\psi\in\mathcal{H}_{\text{ac}}$,
		\begin{center}
			$\|P\left(A\right)\psi\|^2=\langle P\left(A\right)\psi,P\left(A\right)\psi\rangle=\langle P\left(A\right)\psi,\psi\rangle=\int_A xd\mu_\psi=0$
		\end{center}
	\end{proof}
	\subsection{Subordinacy theory and the Borel transform}\label{sub_theory_borel_transform_section}
	\subsubsection{The half-line case}
	Let $J^+$ be a bounded self-adjoint Jacobi operator on $\ell^2\left(\mathbb{N}\right)$, namely an operator of the form
	\begin{equation}\label{jac_op_half_line}
		\left(J^+u\right)\left(n\right)=\begin{cases}
			a_{n-1}u\left(n-1\right)+a_nu\left(n+1\right)+b_nu\left(n\right) & n>1\\
			a_1u\left(2\right)+b_1u\left(1\right) & n=1
		\end{cases}
	\end{equation}
	for bounded real-valued sequences $\left(a_n\right)_{n\in\mathbb{N}}$ and $\left(b_n\right)_{n\in\mathbb{N}}$ such that $\left(a_n\right)_n\in\mathbb{N}$ is positive. For any $\theta\in\left[0,\pi\right)$, define the operator $J_\theta^+$ by\footnote{The case $\theta=\frac{\pi}{2}$ is defined by plugging in the sequences $\widetilde{a}_n=a_{n+1}$, $\widetilde{b}_n=b_{n+1}$ to (\ref{jac_op_half_line}).}
	\begin{center}
		$J_\theta^+\coloneqq J^+-\tan\theta\langle\delta_1,\cdot\rangle\delta_1$.
	\end{center}
	It is not hard to verify that for any $\theta\in\left[0,\pi\right)$, $\delta_1$ is a cyclic vector for $J^+_\theta$, namely \mbox{$\ell^2\left(\mathbb{N}\right)=\overline{\vspan\left\{\delta_1,J_\theta^+\delta_1,\left(J_\theta^+\right)^2\delta_1,\ldots\right\}}$}. In that case, the spectral measure of $\delta_1$, which we will denote by $\mu_\theta$, is in the spectral measure class of $J_\theta^+$. The purpose of subordinacy theory is to find a set $S$ which supports the singular part of $\mu_\theta$, and is defined through asymptotic properties of solutions of the eigenvalue equation,
	\begin{equation}\label{ev_eq1}
		a_{n-1}u\left(n-1\right)+a_n u\left(n+1\right)+b_nu\left(n\right)=Eu\left(n\right)
	\end{equation}
	subject to the boundary condition
	\begin{equation}\label{bc_eq_hl}
		u\left(0\right)\cos\theta+u\left(1\right)\sin\theta=0,
	\end{equation}
	by which we mean that $u$ is defined on $\mathbb{N}\cup\left\{0\right\}$, satisfies (\ref{ev_eq1}) for $n\in\mathbb{N}$, and satisfies (\ref{bc_eq_hl}).
	
	Given $u:\mathbb{N}\to\mathbb{C}$ and $L>1$, we define
	\begin{center}
		$\|u\|_L\coloneqq\left[\sum\limits_{n=1}^{\left[L\right]}\left|u\left(n\right)\right|^2+\left(L-\left[L\right]\left|u\left(\left[L\right]+1\right)\right|^2\right)\right]^\frac{1}{2}$.
	\end{center}
	In other words, $\|\cdot\|_L$ denotes the norm of a vector $u$ over an interval of length $L$. Given $E\in\mathbb{R}$ and $\theta\in\left[0,\pi\right)$, we will denote by $u_{\theta,E}$ the solution of (\ref{ev_eq1}) which satisfies (\ref{bc_eq_hl}), normalized so that $u_{\theta,E}\left(1\right)=\cos\theta$. With that in mind, we can present the definition of subordinacy.
	\begin{definition}
		Given $E\in\mathbb{R}$ and $\theta\in\left[0,\pi\right)$, $u_{\theta,E}$ will be called {\it subordinate} if for any $\eta\neq\theta$, we have
		\begin{equation}\label{subordinacy_eq}
			\underset{L\to\infty}{\lim}\frac{\|u_{\theta,E}\|_L}{\|u_{\eta,E}\|_L}=0.
		\end{equation}
	\end{definition}
	\begin{remark}
		For any $E\in\mathbb{R}$, the collection $\left\{\alpha u_{\theta,E}:\alpha\in\mathbb{C},\theta\in\left[0,\pi\right)\right\}$ is a two-dimensional vector space. This implies that $u_{\theta,E}$ is subordinate if and only if {\it there exists} $\eta\neq\theta$ such that (\ref{subordinacy_eq}) holds.
	\end{remark}
	We have the following
	\begin{theorem}\emph{(\cite{KP})}\label{sub_thm_hl}
		The part of $\mu_\theta$ which is singular w.r.t.\ the Lebesgue measure is supported on the set
		\begin{center}
			$S_\theta\coloneqq\left\{E\in\mathbb{R}:u_{\theta,E}\text{ is subordinate}\right\}$.
		\end{center}
	\end{theorem}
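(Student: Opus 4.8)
The plan is to route the statement through the Borel transform of $\mu_\theta$ and its boundary behaviour on the real axis. Since $\delta_1$ is cyclic for $J_\theta^+$, the measure $\mu_\theta$ is the spectral measure of $\delta_1$, and its Borel transform is the Herglotz function $m_\theta(z)=\int_\mathbb{R}\frac{d\mu_\theta(x)}{x-z}=\langle\delta_1,(J_\theta^+-z)^{-1}\delta_1\rangle$ on the upper half-plane. Writing $\im m_\theta(E+i\epsilon)$ as the Poisson integral of $\mu_\theta$, the classical differentiation theory of measures shows that $\left(\mu_\theta\right)_s$ is supported on the set $\{E:\lim_{\epsilon\downarrow0}\im m_\theta(E+i\epsilon)=+\infty\}$. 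Thus it suffices to show that at every $E$ where this boundary value is $+\infty$, the solution $u_{\theta,E}$ is subordinate; equivalently, I must characterise the blow-up of $\im m_\theta(E+i\epsilon)$ through solution norms.

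The key quantitative input I would use is the Jitomirskaya--Last correspondence between the Borel transform and the norms $\|\cdot\|_L$. Let $v_E$ denote the solution of (\ref{ev_eq1}) obeying the boundary condition orthogonal to (\ref{bc_eq_hl}), so that $\{u_{\theta,E},v_E\}$ spans the solution space. For $\epsilon>0$ define a length $L(\epsilon)$ by $\|u_{\theta,E}\|_{L(\epsilon)}\,\|v_E\|_{L(\epsilon)}=\tfrac{1}{2\epsilon}$; the product on the left is continuous and strictly increasing to $+\infty$ in $L$, so $L(\epsilon)$ is well defined and increases continuously to $\infty$ as $\epsilon\downarrow0$. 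The estimate I need is the existence of universal constants $0<c<C$ with
\[
c\,\frac{\|v_E\|_{L(\epsilon)}}{\|u_{\theta,E}\|_{L(\epsilon)}}\ \le\ \im m_\theta(E+i\epsilon)\ \le\ C\,\frac{\|v_E\|_{L(\epsilon)}}{\|u_{\theta,E}\|_{L(\epsilon)}}.
\]
I would prove this by using $\im m_\theta(E+i\epsilon)=\epsilon\,\|(J_\theta^+-z)^{-1}\delta_1\|^2$ to express $\im m_\theta$ in terms of the $\ell^2$ Weyl solution $\psi_z=(J_\theta^+-z)^{-1}\delta_1$, and then controlling $\psi_z$ via the variation-of-parameters (discrete Green's function) formula built from the transfer matrices over $[1,L(\epsilon)]$ together with the tail, the defining relation for $L(\epsilon)$ being exactly what balances the two regimes. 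This is the step I expect to be the main obstacle: essentially all of the analytic work lives here, in tracking transfer-matrix norms and in verifying that the special scale $L(\epsilon)$ renders the inner-block and tail contributions comparable up to universal constants.

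Granting the estimate, the conclusion is soft. By the remark following the definition of subordinacy, $u_{\theta,E}$ is subordinate if and only if $\|u_{\theta,E}\|_L/\|v_E\|_L\to0$, i.e.\ $\|v_E\|_L/\|u_{\theta,E}\|_L\to\infty$, as $L\to\infty$; and since $\epsilon\mapsto L(\epsilon)$ sweeps out a full half-line $[L_0,\infty)$ continuously, convergence along the scales $L(\epsilon)$ is equivalent to convergence along all $L$. Combining this with the displayed inequalities gives, for each $E$, that $\im m_\theta(E+i\epsilon)\to+\infty$ as $\epsilon\downarrow0$ if and only if $u_{\theta,E}$ is subordinate. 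In particular $\{E:\lim_{\epsilon\downarrow0}\im m_\theta(E+i\epsilon)=+\infty\}\subseteq S_\theta$, so the support statement from the first step shows that $\left(\mu_\theta\right)_s$ is supported on $S_\theta$, as claimed.
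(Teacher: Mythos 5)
The paper does not actually prove this theorem---it quotes it from Khan--Pearson \cite{KP}---so your sketch must stand on its own; the route you chose is the quantitative Jitomirskaya--Last route, which is indeed the one the paper itself invokes in Remark \ref{sub_thm_equiv_formulation}. The soft parts of your argument are correct: $\left(\mu_\theta\right)_s$ is supported on $\left\{E:\lim_{\epsilon\downarrow0}\im m_\theta\left(E+i\epsilon\right)=+\infty\right\}$ by Poisson-kernel differentiation theory, the identity $\im m_\theta\left(E+i\epsilon\right)=\epsilon\|\left(J_\theta^+-E-i\epsilon\right)^{-1}\delta_1\|^2$ holds, $L\left(\epsilon\right)$ is well defined (constancy of the Wronskian forces $\|u_{\theta,E}\|_L\|v_E\|_L\to\infty$), and the sweeping argument equating convergence along the scales $L\left(\epsilon\right)$ with convergence along all $L$ is sound. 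The genuine gap is your central two-sided estimate: with $\im m_\theta$ on both sides it is \emph{false}, so the step you yourself flag as carrying all the analytic work cannot be established as stated. The correct inequality (\cite[Theorem 1.1]{JL}) bounds the \emph{modulus},
\[
\left(5-\sqrt{24}\right)\frac{\|v_E\|_{L\left(\epsilon\right)}}{\|u_{\theta,E}\|_{L\left(\epsilon\right)}}\ \le\ \left|m_\theta\left(E+i\epsilon\right)\right|\ \le\ \left(5+\sqrt{24}\right)\frac{\|v_E\|_{L\left(\epsilon\right)}}{\|u_{\theta,E}\|_{L\left(\epsilon\right)}},
\]
and the lower bound fails for $\im m_\theta$: at any $E$ where $m_\theta$ has a finite, real, nonzero boundary value (e.g.\ every $E$ in the resolvent set) both solutions grow exponentially at comparable rates, so the ratio $\|v_E\|_{L\left(\epsilon\right)}/\|u_{\theta,E}\|_{L\left(\epsilon\right)}$ stays bounded away from $0$ while $\im m_\theta\left(E+i\epsilon\right)=O\left(\epsilon\right)$. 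Your concluding "if and only if" is false for the same reason: Herglotz functions can have $\left|m_\theta\left(E+i\epsilon\right)\right|\to\infty$ (hence a subordinate $u_{\theta,E}$) with $\im m_\theta$ bounded, e.g.\ the logarithmic singularity at an endpoint of an interval carrying constant a.c.\ density.

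The damage is, however, contained, because your proof only uses one direction. Since $\im m_\theta\le\left|m_\theta\right|$, the upper half of the displayed inequality yields: $\im m_\theta\left(E+i\epsilon\right)\to\infty$ implies $\|u_{\theta,E}\|_{L\left(\epsilon\right)}/\|v_E\|_{L\left(\epsilon\right)}\to0$, i.e.\ $u_{\theta,E}$ is subordinate, and your first step then gives the support statement exactly as you wrote it. So the repair is: state and prove (or cite) the Jitomirskaya--Last inequality for $\left|m_\theta\right|$ rather than $\im m_\theta$, and drop the equivalence claim; note that your plan of deriving the two-sided $\im$-bound from $\im m_\theta=\epsilon\|\psi_z\|^2$ via the Green's function expansion is doomed in principle, since the target inequality is false, whereas JL's actual proof compares $m_\theta$ with solution data at complex energy. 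With that substitution your argument is a complete and genuinely different proof from the one the paper points to: Khan--Pearson's original argument is qualitative (boundary limits of the $m$-function along sequences, following Gilbert--Pearson), while the JL route buys uniform constants and, as a by-product, the sharper characterization $\left|m_\theta\left(E+i\epsilon\right)\right|\to\infty\iff u_{\theta,E}$ subordinate that the paper records as (\ref{JL_hl_sub_thm}).
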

	\begin{remark}
		Originally, subordinacy theory was proved in \cite{GP} for continuum Schr\"{o}dinger operators which act on a subset of $L^2\left(\mathbb{R}_{\geq 0}\right)$. We state here the discrete setting which was proved in \cite{KP} as it is more relevant for this work.
	\end{remark}
	\subsubsection{The line case}
	Subordinacy theory was extended  in several ways (see, e.g.\ \cite{DGO,G,JL,L}). In this work, we will use its extension to Jacobi operators on $\ell^2\left(\mathbb{Z}\right)$. To that end, let $J:\ell^2\left(\mathbb{Z}\right)\to\ell^2\left(\mathbb{Z}\right)$ be given by (\ref{jac_op_line_eq}). $\delta_1$ is no longer necessarily a cyclic vector, but now the pair $\left\{\delta_0,\delta_1\right\}$ is cyclic, i.e.\ \mbox{$\ell^2\left(\mathbb{Z}\right)=\overline{\vspan\left\{J^n\delta_j:n\in\mathbb{N}\cup\left\{0\right\},j\in\left\{0,1\right\}\right\}}$}. This implies that $\mu\coloneqq\mu_{\delta_0}+\mu_{\delta_1}$ is in the spectral measure class of $J$. In the case of $\mathbb{Z}$, a solution $u$ of the equation
	\begin{equation}\label{ev_eq_line}
		u\left(n-1\right)+u\left(n+1\right)+V\left(n\right)u\left(n\right)=Eu\left(n\right)
	\end{equation}
	for some $E\in\mathbb{R}$ will be called subordinate if and only if its restrictions (under an appropriate normalization) to $\mathbb{Z}_+\coloneqq\left\{1,2,\ldots\right\}$ and to $\mathbb{Z}_-\coloneqq\left\{0,-1,\ldots\right\}$ are subordinate. We have the following
	\begin{theorem}\label{sub_thm_line}
		The singular part of $\mu$ is supported on the set
		\begin{equation}\label{sub_thm_line_supp_eq}
			S\coloneqq\left\{E\in\mathbb{R}:\text{(\ref{ev_eq_line}) has a subordinate solution}\right\}.
		\end{equation}
	\end{theorem}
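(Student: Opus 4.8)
The plan is to reduce the whole-line statement to the two half-line statements of Theorem \ref{sub_thm_hl} by means of the Weyl solutions and the matrix-valued spectral measure of the cyclic pair $\left\{\delta_0,\delta_1\right\}$. Since $J$ is bounded and self-adjoint, for every $z\in\mathbb{C}^+$ the equation (\ref{ev_eq_line}) has, up to a scalar, a unique solution $u_+\left(z,\cdot\right)$ which is $\ell^2$ at $+\infty$ and a unique solution $u_-\left(z,\cdot\right)$ which is $\ell^2$ at $-\infty$. From their values at the sites $0$ and $1$ one reads off the two half-line $m$-functions $m_\pm\left(z\right)$, and the $2\times2$ Green's matrix $\left(\langle\delta_i,\left(J-z\right)^{-1}\delta_j\rangle\right)_{i,j\in\left\{0,1\right\}}$ — whose trace-measure is exactly $\mu=\mu_{\delta_0}+\mu_{\delta_1}$ — can be written explicitly in terms of $m_+,m_-$ and the single coupling $a_0$. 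First I would record this matrix Herglotz representation and note that its denominator is a multiple of the Wronskian $W\left(z\right)$ of $u_-$ and $u_+$, equivalently $m_+\left(z\right)+m_-\left(z\right)$ up to the usual normalization.

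Next I would pass to boundary values along $z=E+i\epsilon$ with $\epsilon\downarrow0$. By the general theory of matrix Herglotz functions, the singular part of $\mu$ is supported on the set of $E$ at which the boundary value of $\im$ of the relevant scalar Herglotz function is infinite, which here is precisely the set where $W\left(E+i0\right)=0$, i.e.\ where $u_-\left(E+i0,\cdot\right)$ and $u_+\left(E+i0,\cdot\right)$ become linearly dependent. The second ingredient is the Jitomirskaya--Last analysis underlying the half-line theory (the content behind Theorem \ref{sub_thm_hl}): off a set that is null for both half-line singular measures, the boundary limit $u_+\left(E+i0,\cdot\right)$ exists and is, up to a scalar, the subordinate solution at $+\infty$ whenever such a solution exists, and likewise $u_-\left(E+i0,\cdot\right)$ is the subordinate solution at $-\infty$. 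Consequently, a point at which the two boundary Weyl solutions are linearly dependent is exactly a point at which a single solution of (\ref{ev_eq_line}) is subordinate at both $\pm\infty$, i.e.\ a point of $S$.

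Putting these together, $\mu_s$ is supported on the set where $W\left(E+i0\right)=0$, which coincides with $S$ up to a $\mu_s$-null set; conversely, at every $E\notin S$ at least one of the two boundary Weyl solutions fails to be subordinate, $W\left(E+i0\right)$ stays away from $0$, and the density of $\mu_{\mathrm{ac}}$ remains finite for a.e.\ such $E$, so $\mu$ is purely absolutely continuous there. This yields $\mu_s\left(\mathbb{R}\setminus S\right)=0$, which is the assertion. The main obstacle is the \emph{matching} step: it is not enough to know that each half-line separately possesses a subordinate solution, one must show that the whole-line singular measure is carried exactly where the right- and left-subordinate solutions coincide. This is where the precise link between the vanishing of $W\left(E+i0\right)$ and the common boundary phase of $m_+$ and $m_-$ is essential, and where the Jitomirskaya--Last estimates — rather than the bare support statement of Theorem \ref{sub_thm_hl} — must be invoked to control the set on which both half-lines are singular but their subordinate solutions disagree.
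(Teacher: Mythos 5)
A preliminary remark: the paper does not prove Theorem \ref{sub_thm_line} at all --- it cites \cite{G} for the continuum case and \cite{DF,L} for the discrete case, and the closest in-paper argument is the appendix proof of the stronger Proposition \ref{L_prop}. Your outline follows the classical Gilbert scheme that underlies those references: express the $2\times2$ Green matrix at sites $0,1$ in terms of $m_\pm$, note that its trace is the Borel transform $M$ of $\mu$, support $\mu_s$ on $\left\{E:\lim_{\epsilon\to0}\left|M\left(E+i\epsilon\right)\right|=\infty\right\}$ via Theorem \ref{supp_thm}, observe that the common denominator is a multiple of the Wronskian (equivalently $1-a_0^2m_+m_-$ up to normalization), and translate the blow-up condition into subordinacy at both ends via the half-line correspondence (\ref{JL_hl_sub_thm}). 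So the frame is the right one and matches the cited proofs in spirit.

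There is, however, one step that is wrong as stated, and it sits exactly at the crux. You claim that \emph{off a set that is null for both half-line singular measures}, the boundary Weyl solutions $u_\pm\left(E+i0,\cdot\right)$ exist and coincide with the half-line subordinate solutions. But the exceptional set you must control has to be $\mu_s$-null, where $\mu_s$ is the \emph{whole-line} singular measure, and $\mu_s$ can be mutually singular with the Lebesgue measure and with both half-line singular measures: $J$ differs from the decoupled operator $J_-^{\theta}\oplus J_+^{\theta}$ by a finite-rank perturbation, and singular parts are notoriously unstable under such perturbations (e.g.\ $J$ may have an eigenvalue at $E$ whose eigenfunction satisfies no prescribed boundary condition at the cut, so that neither half-line measure charges $E$). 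Boundary values of the Herglotz functions $m_\pm$ are guaranteed to exist only Lebesgue-a.e.\ and on the singular supports of their \emph{own} measures, so nothing in half-line theory gives you convergence of $m_\pm\left(E+i\epsilon\right)$ as $\epsilon\downarrow0$ at $\mu_s$-a.e.\ $E$; and the information extractable directly from $\left|M\left(E+i\epsilon\right)\right|\to\infty$ (namely $\im m_\pm\to0$ and $a_0^2m_+m_-\to1$ along sequences) is merely sequential, which is not enough to invoke (\ref{JL_hl_sub_thm}), a statement about genuine limits. Establishing the $\mu_s$-a.e.\ existence and matching of these limits \emph{is} the content of the theorem, not a removable technicality. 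Two known repairs exist: Poltoratskii's theorem (Theorem \ref{pol_thm}) applied to the ratios $M_{kj}/M$, which are M\"obius expressions in $m_\pm$ and therefore acquire full $\mu_s$-a.e.\ limits --- this is precisely the route of the paper's appendix proof of Proposition \ref{L_prop}, following \cite{L} --- or the quantitative Jitomirskaya--Last inequality with coupled length scales as in \cite{JL2} and \cite{DF}, which is the machinery you gesture at in your final paragraph. As written, though, your proposal asserts the crucial lemma with the wrong null-measure class rather than proving it, so the argument does not close.
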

	This theorem was proved for the continuum case in \cite{G}. A proof for the discrete case can be found in \cite{DF,L}.
	\subsubsection{The Borel transform of a measure}
	Given a Borel measure $\sigma$, its Borel transform is defined by
	\begin{center}
		$M_\sigma\left(z\right)=\int_\mathbb{R}\frac{d\sigma\left(E\right)}{E-z}$
	\end{center}
	for any $z\in\mathbb{C}$ such that $\im z>0$. The boundary behavior of the Borel transform of a measure $\mu$ is related to continuity properties of $\mu$ by the following theorem (see, for example, \cite[Theorem 1.6]{Sim}).
	\begin{theorem}\label{supp_thm}
		Let $\mu$ be a positive measure satisfying $\int_\mathbb{R}\frac{d\mu\left(x\right)}{\left|x\right|+1}<\infty$. Denote by $\mu_{ac},\mu_s$ the absolutely continuous and singular parts of $\mu$ (with respect to the Lebesgue measure) respectively. Then
		\begin{enumerate}
			\item $\mu_{ac}$ is supported on the set $\left\{E\in\mathbb{R}:0< \underset{\epsilon\to0}{\lim}\im m_\mu(E+i\epsilon)<\infty\right\}$.
		
			\item $\mu_s$ is supported on the set $\left\{E\in\mathbb{R}:\underset{\epsilon\to0}{\lim}\left|m_\mu(E+i\epsilon)\right|=\infty\right\}$.
		\end{enumerate}
	\end{theorem}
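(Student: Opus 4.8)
The plan is to reduce both assertions to the boundary behavior of the Poisson integral of $\mu$ and then to invoke the differentiation theory of measures. Writing $z = E + i\epsilon$ with $\epsilon > 0$ and separating real and imaginary parts in the definition of the Borel transform, one has
\[
\im M_\mu(E + i\epsilon) = \int_\mathbb{R} \frac{\epsilon}{(x-E)^2 + \epsilon^2}\, d\mu(x),
\]
which is exactly $\pi$ times the Poisson extension of $\mu$ evaluated at height $\epsilon$. The hypothesis $\int_\mathbb{R} \frac{d\mu(x)}{|x|+1} < \infty$ guarantees that this integral, and indeed $M_\mu(z)$ itself, is finite for every $z$ in the upper half-plane, so all the quantities below are well defined; it also yields the global bound $\mu((E-h, E+h)) = O(h)$, which will control boundary terms at infinity.

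Next I would bring in the symmetric derivative
\[
(D\mu)(E) = \lim_{h \to 0^+} \frac{\mu((E-h,\, E+h))}{2h}.
\]
By the de la Vall\'ee Poussin--Besicovitch differentiation theorem on $\mathbb{R}$, this limit exists in $[0, +\infty]$ for Lebesgue-almost every $E$; it is finite and equals $\frac{d\mu_{ac}}{dx}(E)$ for Lebesgue-almost every $E$, and it equals $+\infty$ for $\mu_s$-almost every $E$. Since $\mu_{ac}$ is absolutely continuous, $\frac{d\mu_{ac}}{dx}(E) \in (0, \infty)$ for $\mu_{ac}$-almost every $E$, which already singles out a supporting set for $\mu_{ac}$ of the type appearing in statement (1) and a Lebesgue-null supporting set for $\mu_s$ on which $(D\mu)(E) = +\infty$.

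The technical core, which I expect to be the main obstacle, is to transfer these statements about $(D\mu)(E)$ to the radial limit of $\im M_\mu(E + i\epsilon)$. The cleanest route is to set $g(h) = \mu((E-h,\, E+h))$, write the Poisson integral as a Stieltjes integral in $h$ against the radial kernel $\frac{\epsilon}{h^2 + \epsilon^2}$, and integrate by parts. At points that are not atoms of $\mu$ the boundary terms vanish (the term at $\infty$ by the bound $g(h) = O(h)$, the term at $0$ since $g(0^+) = \mu(\{E\}) = 0$), giving
\[
\im M_\mu(E + i\epsilon) = \int_0^\infty \frac{2\epsilon h}{(h^2 + \epsilon^2)^2}\, g(h)\, dh.
\]
The substitution $h = \epsilon s$ turns the right-hand side into $\int_0^\infty \frac{2s}{(s^2+1)^2}\,\frac{g(\epsilon s)}{\epsilon}\, ds$, and since $\int_0^\infty \frac{4 s^2}{(s^2+1)^2}\, ds = \pi$, a dominated convergence argument (with the tail controlled uniformly in $\epsilon$ by $g(h) = O(h)$) shows $\lim_{\epsilon \to 0} \frac{1}{\pi}\im M_\mu(E + i\epsilon) = (D\mu)(E)$ whenever the symmetric derivative exists and is finite; when $(D\mu)(E) = +\infty$, the same scaling together with Fatou's lemma forces $\im M_\mu(E + i\epsilon) \to +\infty$. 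The atoms of $\mu$ form a countable, hence Lebesgue-null, set on which $\im M_\mu(E+i\epsilon) \ge \mu(\{E\})/\epsilon \to \infty$ directly, so they are harmless for (1) and only reinforce (2).

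Combining the pieces completes the proof: on the supporting set for $\mu_{ac}$ identified above, $\lim_{\epsilon \to 0} \im M_\mu(E + i\epsilon) = \pi (D\mu)(E) \in (0, \infty)$, which gives statement (1); on the Lebesgue-null supporting set for $\mu_s$ we have $\im M_\mu(E + i\epsilon) \to +\infty$, and since $|M_\mu| \ge \im M_\mu$ this yields $\lim_{\epsilon \to 0} |M_\mu(E + i\epsilon)| = \infty$, which gives statement (2).
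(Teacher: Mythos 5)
Your proof is correct and takes essentially the same route as the source of this statement: the paper does not prove Theorem \ref{supp_thm} itself but cites it from \cite[Theorem 1.6]{Sim}, where the argument is exactly yours --- the de la Vall\'ee Poussin differentiation theorem for the symmetric derivative $(D\mu)(E)$ combined with the Poisson-kernel scaling identity $\lim_{\epsilon\to0}\pi^{-1}\im M_\mu(E+i\epsilon)=(D\mu)(E)$. Your verification of that identity by integration by parts and dominated convergence (noting that the uniform domination uses the global bound $g(h)\le C(1+h)$ together with the near-zero linear bound coming from finiteness of $(D\mu)(E)$), the separate treatment of atoms, and the reduction of statement (2) to $|M_\mu|\ge\im M_\mu$ are all sound.
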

	\begin{remark}\label{sub_thm_equiv_formulation}
		The spectral measure of any vector w.r.t.\ any bounded self-adjoint operator satisfies the condition required in Theorem \ref{supp_thm}. With that in mind, there is a useful reformulation of Theorem \ref{sub_thm_hl}: Denote by $m_\theta$ the Borel transform of $\mu_\theta$. Then, using the formula
		\begin{equation}\label{pert_formula_m_func}
			m_\theta=\frac{m_0}{1-\tan\theta m_0},
		\end{equation}
		and an inequality which relates the growth rate of solutions to the behavior of the Borel transoform of $m_0$ (see \cite[Theorem 1.1]{JL}), one can show that $\underset{\epsilon\to0}{\lim}\left|m_\theta\left(E+i\epsilon\right)\right|=\infty$ if and only if $u_{\theta,E}$ is subordinate. In other words, we have
		\begin{equation}\label{JL_hl_sub_thm}
			u_{\theta,E}\text{ is subordinate}\iff\underset{\epsilon\to0}{\lim}m_0\left(E+i\epsilon\right)=\cot\theta.
		\end{equation}
	\end{remark}
	We will use the following theorem of Poltoratskii:
	\begin{theorem}\emph{(\cite[Theorem 1.1]{JakL})}\label{pol_thm}
		Let $\nu,\sigma$ be two Borel measures such that $\sigma$ is positive. Suppose that $\nu\ll\sigma$. Then for $\sigma_s$-almost every $E\in\mathbb{R}$, we have
		\begin{center}
			$\underset{\epsilon\to0}{\lim}\frac{M_\nu\left(E+i\epsilon\right)}{M_\sigma\left(E+i\epsilon\right)}=\frac{d\nu}{d\sigma}\left(E\right)$,
		\end{center}
		where $\sigma_s$ denotes the part of $\sigma$ which is singular w.r.t.\ the Lebesgue measure.
	\end{theorem}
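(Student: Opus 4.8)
The statement to establish is Poltoratskii's theorem, and the plan is to prove it by rank-one perturbation theory, which meshes with the $m$-function formula (\ref{pert_formula_m_func}) already used in the paper. First I would normalize the problem to a spectral-measure setting. Assume $\sigma$ finite (localizing if necessary), realize it as the spectral measure of a cyclic vector by taking $\mathcal{H}=L^2\left(\mathbb{R},\sigma\right)$, letting $A$ be multiplication by the variable, and setting $\phi\equiv 1$, so that $\mu_\phi^A=\sigma$. Writing $f\coloneqq\frac{d\nu}{d\sigma}\in L^1(\sigma)$ (which exists since $\nu\ll\sigma$), one has $M_\nu=M_{f\sigma}$ and $d\mu_{\phi,f}^A=f\,d\sigma$, so the claim reads
\[
\lim_{\epsilon\to 0}\frac{M_{f\sigma}\left(E+i\epsilon\right)}{M_\sigma\left(E+i\epsilon\right)}=f\left(E\right)\qquad\sigma_s\text{-a.e.}
\]
By splitting $f$ into real/imaginary and positive/negative parts I would reduce to $f\geq 0$, and by truncation to $f$ bounded, hence $f\in L^2(\sigma)$; the passage back to general $f\in L^1(\sigma)$ I would control by a weak-type maximal inequality for the normalized Cauchy transform.

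The engine of the argument is the behavior of the normalized Cauchy transform under rank-one perturbations $A_\lambda\coloneqq A+\lambda\langle\phi,\cdot\rangle\phi$. Krein's resolvent formula gives $\langle\phi,(A_\lambda-z)^{-1}f\rangle=\frac{M_{f\sigma}(z)}{1+\lambda M_\sigma(z)}$ and $M_{\mu_\phi^{A_\lambda}}(z)=\frac{M_\sigma(z)}{1+\lambda M_\sigma(z)}$, which is exactly the structure of (\ref{pert_formula_m_func}). Dividing, the factors $1+\lambda M_\sigma(z)$ cancel, so the ratio $\Phi\left(z\right)\coloneqq M_{f\sigma}(z)/M_\sigma(z)$ is \emph{independent of $\lambda$}. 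Thus for every $\lambda$ the assertion is a statement about the same analytic function $\Phi$; only the reference measure $\left(\mu_\phi^{A_\lambda}\right)_s$ on whose support one must identify $\Phi\left(E+i0\right)$ changes with $\lambda$.

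On the pure point part this is immediate from Aronszajn--Donoghue theory: if $E_0$ is an eigenvalue of $A_\lambda$, its eigenvector is a multiple of $(A-E_0)^{-1}\phi$, and a direct computation of the weights $\mu_\phi^{A_\lambda}\left(\{E_0\}\right)$ and $\mu_{\phi,f}^{A_\lambda}\left(\{E_0\}\right)$ shows that their quotient equals $\Phi\left(E_0+i0\right)$. The real content — and what I expect to be the main obstacle — is the singular continuous part. Here I would invoke the Javrjan--Kotani spectral averaging formula, which states that $\int_{\mathbb{R}}\mu_\phi^{A_\lambda}\left(B\right)\,d\lambda$ is a constant multiple of Lebesgue measure, together with the Aronszajn--Donoghue fact that the singular parts $\left(\mu_\phi^{A_\lambda}\right)_s$ live on the mutually disjoint level sets $\left\{E:M_\sigma\left(E+i0\right)=-1/\lambda\right\}$ and so, as $\lambda$ varies, exhaust the whole singular spectrum. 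Since $\Phi$ admits finite nontangential boundary values Lebesgue-a.e., the set on which $\Phi\left(E+i0\right)$ fails to match the Radon--Nikodym density carries zero $\lambda$-average against the absolutely continuous averaged measure; the perturbation-invariance of $\Phi$ and the partition of the singular spectrum by these level sets then transfer the conclusion to the unperturbed member. Making the averaging rigorous against the singular measures, and establishing the weak-type bound that governs the $L^1$-approximation of the first step, are the two places where the bulk of the work lies.
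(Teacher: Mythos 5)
First, a point of comparison: the paper does not prove Theorem \ref{pol_thm} at all --- it is quoted verbatim from \cite[Theorem 1.1]{JakL}, with a remark that Poltoratskii's original argument \cite{Pol} is for the unit circle. So your proposal is an attempt to reconstruct the Jak\v{s}i\'{c}--Last proof, and at the level of toolkit you have the right ingredients: the realization of $\sigma$ as the spectral measure of a cyclic vector, the reduction from $L^1$ to bounded $f$ via a weak-type maximal bound (that bound is genuine, but it is Poltoratskii's maximal inequality and is itself a substantial result), and the Krein-formula cancellation showing that $\Phi=M_{f\sigma}/M_\sigma$ is independent of the coupling $\lambda$, which is correct.

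However, the step you yourself flag as the real content --- the singular continuous part --- fails as sketched, for structural rather than technical reasons. (i) Spectral averaging yields an absolutely continuous measure: $\int_\mathbb{R}\mu_\phi^{A_\lambda}\left(B\right)d\lambda=\left|B\right|$. Every set relevant here (the singular supports, and the ``bad set'' where $\Phi\left(E+i0\right)$ fails to match the derivative) has Lebesgue measure zero, so averaging only gives $\mu_\phi^{A_\lambda}\left(N\right)=0$ for Lebesgue-a.e.\ $\lambda$; it can never exclude that the one member you care about, $\lambda=0$ (i.e.\ $\sigma$ itself), is exceptional. This is exactly the Simon--Wolff phenomenon, and it is why averaging arguments produce ``a.e.\ coupling'' statements, never statements about a fixed operator. (ii) The Aronszajn--Donoghue level sets do not exhaust the singular spectrum in the way you need: for $\lambda\neq0$, $\left(\mu_\phi^{A_\lambda}\right)_s$ is carried by $\left\{E:M_\sigma\left(E+i0\right)=-1/\lambda\right\}$, whereas $\sigma_s$ is carried by the disjoint set $\left\{E:\underset{\epsilon\to0}{\lim}\left|M_\sigma\left(E+i\epsilon\right)\right|=\infty\right\}$. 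Hence facts established at singular points of the perturbed operators --- in particular your eigenvalue computation, which moreover covers only atoms, i.e.\ points where additionally $\int\left(x-E\right)^{-2}d\sigma\left(x\right)<\infty$ --- concern points that $\sigma_s$ never charges, and the $\lambda$-invariance of $\Phi$ transfers nothing to them. (iii) Worse, identifying $\Phi\left(E+i0\right)$ with $\frac{d\mu_{\phi,f}^{A_\lambda}}{d\mu_\phi^{A_\lambda}}\left(E\right)$ at singular \emph{continuous} points of $A_\lambda$ is not a computation as it is at atoms; it is precisely Poltoratskii's theorem for the perturbed pair, so the transfer is circular exactly where you need it. What closes the gap in the actual proofs is a pointwise mechanism valid $\sigma_s$-a.e.\ rather than an averaged one: the normal boundary behavior $M_\sigma\left(E+i\epsilon\right)\sim i\im M_\sigma\left(E+i\epsilon\right)$ for $\sigma_s$-a.e.\ $E$, Poisson-kernel differentiation giving $\im M_{f\sigma}\left(E+i\epsilon\right)/\im M_\sigma\left(E+i\epsilon\right)\to f\left(E\right)$ $\sigma$-a.e., and a Cauchy--Schwarz/Besicovitch estimate controlling the difference between the Cauchy ratio and the Poisson ratio. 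Your averaging route to the singular continuous part would have to be replaced by an argument of this kind, not merely ``made rigorous.''
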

	\begin{remark}
		This theorem was originally proved in \cite{Pol} for measures on the unit circle. Although the transition between the two versions is elementary, we cite here the real-valued version, which was proved in \cite{JakL}.
	\end{remark}
	We will also use the following
	\begin{lemma}\label{bdd_der_lemma}
		Let $T$ be a bounded self-adjoint operator on a Hilbert space $\mathcal{H}$ and let $\varphi_1,\varphi_2\in\mathcal{H}$. Let $\mu=\mu_{\varphi_1}+\mu_{\varphi_2}$. Then for any $k,j\in\left\{1,2\right\}$, $\mu_{\varphi_k,\varphi_j}\ll\mu$, and for $\mu$-almost every $E\in\mathbb{R}$, \mbox{$\left|\frac{d\mu_{\varphi_k\varphi_j}}{d\mu}\left(E\right)\right|\leq 1$}.
	\end{lemma}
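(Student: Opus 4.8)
The plan is to deduce both assertions from a single set-wise estimate, namely that $\left|\mu_{\varphi_k,\varphi_j}(B)\right|\le\mu(B)$ for every Borel set $B$; everything else is then soft measure theory.

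First I would dispatch absolute continuity. Suppose $\mu(B)=0$. Since $\mu=\mu_{\varphi_1}+\mu_{\varphi_2}$ is a sum of nonnegative measures, both $\mu_{\varphi_1}(B)=\|P_T(B)\varphi_1\|^2$ and $\mu_{\varphi_2}(B)=\|P_T(B)\varphi_2\|^2$ vanish, so $P_T(B)\varphi_1=P_T(B)\varphi_2=0$. Using that $P_T(B)$ is a self-adjoint projection, $P_T(B)=P_T(B)^2$, I can rewrite the joint measure as $\mu_{\varphi_k,\varphi_j}(B)=\langle P_T(B)\varphi_k,\varphi_j\rangle=\langle P_T(B)\varphi_k,P_T(B)\varphi_j\rangle$, which is $0$ because $P_T(B)\varphi_k=0$. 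Hence $\mu_{\varphi_k,\varphi_j}\ll\mu$.

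The central step is the pointwise bound, obtained from the same rewriting followed by Cauchy--Schwarz in $\mathcal{H}$: for any Borel $B$,
\[
\left|\mu_{\varphi_k,\varphi_j}(B)\right|=\left|\langle P_T(B)\varphi_k,P_T(B)\varphi_j\rangle\right|\le\|P_T(B)\varphi_k\|\,\|P_T(B)\varphi_j\|=\sqrt{\mu_{\varphi_k}(B)\,\mu_{\varphi_j}(B)}\le\mu(B),
\]
where the final inequality uses $\mu_{\varphi_i}(B)\le\mu(B)$ for $i\in\{1,2\}$, since $\mu_{\varphi_i}$ is a summand of $\mu$.

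Finally, to convert this into the derivative bound I would pass to total variation. Writing $h=\frac{d\mu_{\varphi_k,\varphi_j}}{d\mu}$, the set-wise estimate says that the total-variation measure satisfies $|\mu_{\varphi_k,\varphi_j}|\le\mu$; since $\frac{d|\mu_{\varphi_k,\varphi_j}|}{d\mu}=|h|$ holds $\mu$-almost everywhere, comparing $\int_B|h|\,d\mu=|\mu_{\varphi_k,\varphi_j}|(B)\le\mu(B)$ over all $B$ forces $|h|\le1$ for $\mu$-almost every $E$. The only point requiring a little care --- the thing I would flag as the main (mild) obstacle --- is that $\mu_{\varphi_k,\varphi_j}$ is a complex measure, so one must either argue through its total variation as above or, equivalently, invoke the Besicovitch differentiation theorem to identify $h(E)$ with $\lim_{\delta\to0}\mu_{\varphi_k,\varphi_j}\big((E-\delta,E+\delta)\big)/\mu\big((E-\delta,E+\delta)\big)$ and bound the quotients directly. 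In either route the set-wise inequality is doing all the work.
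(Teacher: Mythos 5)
Your proposal is correct and follows essentially the same route as the paper: the key step in both is Cauchy--Schwarz applied to $\langle P_T(B)\varphi_k,\varphi_j\rangle$ (using that $P_T(B)$ is a self-adjoint projection), giving $\left|\mu_{\varphi_k,\varphi_j}(B)\right|\leq\sqrt{\mu_{\varphi_k}(B)\mu_{\varphi_j}(B)}\leq\mu(B)$, from which both conclusions follow. You are in fact somewhat more careful than the paper, which asserts the passage from the set-wise bound to $\left|\frac{d\mu_{\varphi_k,\varphi_j}}{d\mu}\right|\leq1$ $\mu$-a.e.\ without comment, whereas you correctly flag and resolve the complex-measure subtlety via total variation (or Besicovitch differentiation).
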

	\begin{proof}
		For any Borel set $B\subseteq\mathbb{R}$ and $\psi,\varphi\in\mathcal{H}$, we have that $\mu_{\psi,\varphi}\left(B\right)=\langle P_T\left(B\right)\psi,\varphi\rangle$. This implies that 
		\begin{center}
			$\mu_{\varphi_k,\varphi_j}^2\left(B\right)=\left|\langle P_T\left(B\right)\varphi_k,\varphi_j\rangle\right|^2\leq\langle P_T\left(B\right)\varphi_k,\varphi_k\rangle\cdot\langle P_T\left(B\right)\varphi_j,\varphi_j\rangle\leq\mu^2\left(B\right)$,
		\end{center}
		which implies both that $\mu_{\varphi_k,\varphi_j}\ll\mu$ and that $\left|\frac{d\mu_{\varphi_k\varphi_j}}{d\mu}\right|\leq 1$ $\mu$-almost everywhere, as required.
	\end{proof}
	It will be useful to show that the subordinate solution is actually given in terms of the Radon-Nikodym derivative of certain spectral measures.  More precisely, we will use the following
	\begin{prop}\label{L_prop}
		Let $J$ be a Jacobi operator on $\ell^2\left(\mathbb{Z}\right)$, let $\mu\coloneqq\mu_{\delta_0}+\mu_{\delta_1}$ and denote by $M$ the Borel transform of $\mu$. For any $k,j\in\mathbb{Z}$, let $\mu_{kj}$ be the joint spectral measure of $\delta_k$ and $\delta_j$ w.r.t.\ $J$ and let $M_{kj}$ be its Borel transform.
		There exists a set $\widetilde{S}\subseteq S$ (where $S$ is given by (\ref{sub_thm_line_supp_eq})) which supports $\mu_s$ such that for any $E\in\widetilde{S}$ and for any $k,j\in\mathbb{Z}$, we have
		\begin{equation}
			\underset{\epsilon\to0}{\lim}
			\frac{\mu_{kj}\left(\left(E-\epsilon,E+\epsilon\right)\right)}{\mu\left(E-\epsilon,E+\epsilon\right)}=\underset{\epsilon\to 0}{\lim}\frac{M_{kj}\left(E+i\epsilon\right)}{M\left(E+i\epsilon\right)}=u_E\left(k\right)u_E\left(j\right),
		\end{equation}
		where $u_E$ is the subordinate solution of (\ref{ev_eq_line}) which satisfies $\left|u_E\left(0\right)\right|^2+\left|u_E\left(1\right)\right|^2=1$.
	\end{prop}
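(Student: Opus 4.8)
The plan is to show that both limits in the statement coincide with the Radon--Nikodym derivative $F_{kj}(E):=\frac{d\mu_{kj}}{d\mu}(E)$, and then to identify the matrix $\left(F_{kj}(E)\right)_{k,j\in\mathbb{Z}}$ as the rank-one matrix $u_E(k)u_E(j)$ built from the subordinate solution. First I would record that all the relevant measures are absolutely continuous with respect to $\mu$. Since $M_{kj}(z)=\langle(J-z)^{-1}\delta_k,\delta_j\rangle$, the Cauchy--Schwarz estimate used in the proof of Lemma~\ref{bdd_der_lemma} gives $\left|\mu_{kj}(B)\right|^2\le\mu_{\delta_k}(B)\mu_{\delta_j}(B)$ for every Borel set $B$; as $\mu$ is in the spectral measure class of $J$ we have $\mu_{\delta_k}\ll\mu$, hence $\mu_{kj}\ll\mu$, and $\mu_{kj}$ is a finite signed (indeed real, since $J$ is real symmetric) measure. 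Poltoratskii's theorem (Theorem~\ref{pol_thm}) with $\nu=\mu_{kj}$ and $\sigma=\mu$ then yields $\lim_{\epsilon\to0}M_{kj}(E+i\epsilon)/M(E+i\epsilon)=F_{kj}(E)$ for $\mu_s$-a.e.\ $E$, while the Besicovitch differentiation theorem gives $\lim_{\epsilon\to0}\mu_{kj}((E-\epsilon,E+\epsilon))/\mu((E-\epsilon,E+\epsilon))=F_{kj}(E)$ for $\mu$-a.e.\ $E$. Intersecting these countably many (over $k,j\in\mathbb{Z}$) full-measure sets with the support $S$ of $\mu_s$ (Theorem~\ref{sub_thm_line}) and with the set $\{E:\lim_{\epsilon\to0}\left|M(E+i\epsilon)\right|=\infty\}$ (Theorem~\ref{supp_thm}) produces a single set $\widetilde S\subseteq S$ of full $\mu_s$-measure on which both limits exist and equal $F_{kj}(E)$.

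Next I would extract structure from $\left(F_{kj}(E)\right)$. For each Borel set $B$ the matrix $\left(\mu_{kj}(B)\right)_{k,j}=\left(\langle P(B)\delta_k,\delta_j\rangle\right)_{k,j}$ is a Gram matrix, hence symmetric and positive semidefinite; dividing by $\mu(B)>0$ and letting $B=(E-\epsilon,E+\epsilon)$ shrink shows that $\left(F_{kj}(E)\right)_{k,j}$ is symmetric and positive semidefinite. Moreover, using $M_{kj}=M_{jk}$ and dividing the resolvent identity $a_{k-1}M_{k-1,j}(z)+a_kM_{k+1,j}(z)+b_kM_{kj}(z)=zM_{kj}(z)+\delta_{kj}$ by $M(z)$ and letting $z=E+i\epsilon\to E$ with $E\in\widetilde S$, the inhomogeneous term $\delta_{kj}/M(z)$ tends to $0$ (because $\left|M(z)\right|\to\infty$ there); hence for each fixed $j$ the sequence $k\mapsto F_{kj}(E)$ is a genuine solution of (\ref{ev_eq_line}) at energy $E$.

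Now I would carry out the rank-one reduction. By the eigenfunction expansion framework (\cite{Ber}), for the cyclic pair $\{\delta_0,\delta_1\}$ the multiplicity $N(E)$ equals the rank of the $2\times2$ block $\left(F_{ij}(E)\right)_{i,j\in\{0,1\}}$, so the cited fact that $N(E)=1$ for $\mu_s$-a.e.\ $E$ means this block has rank one. Being symmetric and positive semidefinite, it equals $u_E(i)u_E(j)$ for a real vector $(u_E(0),u_E(1))$, whose trace normalization $F_{00}+F_{11}=\frac{d\mu_{\delta_0}}{d\mu}+\frac{d\mu_{\delta_1}}{d\mu}=1$ yields $\left|u_E(0)\right|^2+\left|u_E(1)\right|^2=1$ automatically. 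Taking $u_E$ to be the solution of (\ref{ev_eq_line}) with these initial data and using symmetry, each solution $k\mapsto F_{kj}(E)$ has initial data $(F_{0j},F_{1j})=(F_{j0},F_{j1})=u_E(j)\,(u_E(0),u_E(1))$, so $F_{kj}(E)=u_E(k)u_E(j)$ for all $k,j\in\mathbb{Z}$.

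It remains to see that $u_E$ is the subordinate solution, which I expect to be the main obstacle. I would feed the standard representation of the resolvent of $J$, namely $M_{kj}(z)=\psi^-_z(k\wedge j)\,\psi^+_z(k\vee j)/\mathcal W(z)$ in terms of the Weyl solutions $\psi^+_z,\psi^-_z$ that lie in $\ell^2$ at $+\infty$ and $-\infty$, into the ratio, obtaining after cancellation of $\mathcal W(z)$ the identity $M_{kj}(z)/M(z)=\psi^-_z(k\wedge j)\psi^+_z(k\vee j)\big/\bigl(\psi^-_z(0)\psi^+_z(0)+\psi^-_z(1)\psi^+_z(1)\bigr)$. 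The crux is that as $z=E+i\epsilon\to E$ with $E\in S$ the boundary data of $\psi^+_z$ and $\psi^-_z$ converge projectively to those of the subordinate solutions at $+\infty$ and at $-\infty$ respectively, and on $S$ these two one-sided subordinate solutions coincide; passing to the limit then forces $u_E$ to be proportional to the (normalized) subordinate solution. Making this convergence rigorous --- equivalently, justifying the interchange of the limit $z\to E$ with the passage $k\to\pm\infty$ --- is the delicate point; the control needed on the boundary phase of $\psi^\pm_z$ is exactly what the half-line reformulation (\ref{JL_hl_sub_thm}) supplies, through the Jitomirskaya--Last inequality relating $\left|m_0(E+i\epsilon)\right|$ to the ratios $\|u_{\theta,E}\|_L/\|u_{\eta,E}\|_L$ of truncated norms.
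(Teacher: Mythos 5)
Your first two stages are sound and essentially match the paper: absolute continuity of $\mu_{kj}$ w.r.t.\ $\mu$ via Cauchy--Schwarz (as in Lemma \ref{bdd_der_lemma}), Poltoratskii's theorem plus the differentiation theorem to equate both limits with $f_{kj}=\frac{d\mu_{kj}}{d\mu}$ on a countable intersection $\widetilde S\subseteq S$ on which also $\lim_{\epsilon\to0}\left|M\left(E+i\epsilon\right)\right|=\infty$, and the observation that dividing the resolvent identity by $M$ kills the inhomogeneous $\delta_{kj}$ term, so that each column $k\mapsto f_{kj}\left(E\right)$ solves (\ref{ev_eq_line}). That last derivation is in fact cleaner than the paper's, which extracts the same equation from the interior rows of the inverse-resolvent formula of Lemma \ref{Schur_lemma}. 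Your rank-one step, however, diverges: you import Kac's theorem that $N\left(E\right)=1$ for $\mu_s$-a.e.\ $E$, whereas the paper derives rank one internally, by first proving that every column is either subordinate or identically zero and then invoking uniqueness of the subordinate solution. Citing \cite{Kac} is legitimate, but note that your intermediate claim --- that $N\left(E\right)$ equals the rank of the $2\times2$ block $\left(f_{ij}\right)_{i,j\in\left\{0,1\right\}}$ --- is itself a nontrivial fact about the canonical matrix measure of the cyclic pair which you would need to source or prove, and in any case Kac's theorem says nothing about subordinacy, so it cannot substitute for the final step.

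That final step is the genuine gap, and you flag it yourself: the identification of $u_E$ with the subordinate solution is the actual content of the proposition (everything preceding it is routine once Poltoratskii is available), and your Weyl-solution sketch does not close it. The assertion that the boundary data of $\psi_z^\pm$ converge projectively, as $z\to E$, to the data of the one-sided subordinate solutions is essentially equivalent to what must be proved; moreover, recovering it from ratios such as $M_{00}/M_{10}$ breaks down exactly in the degenerate cases where $u_E$ vanishes at a reference site, since the ratio is then $0/0$ or $\infty$ and no boundary phase is recovered. The paper's mechanism is built to handle precisely this: Lemma \ref{Schur_lemma} gives access, for \emph{every} cut point $n$, to the half-line $m$-functions $m_{\pm n}$ through the corner rows of $F\left(n,z\right)^{-1}$, yielding $\lim_{\epsilon\to0}\left(a_{n-1}\frac{M_{\left(n-1\right)k}}{M}+\frac{M_{nk}}{M}\frac{1}{m_{+n}}\right)=0$. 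When $\alpha_n\neq0$ this forces $m_{+n}\left(E+i\epsilon\right)$ to have a real limit $\cot\theta$, and by (\ref{JL_hl_sub_thm}) the boundary condition at $n$ satisfied by $u$ is exactly the subordinate one; when $\alpha_n=0$ a case analysis along sequences $\epsilon_n\to0$ shows $u$ is either identically zero or subordinate. To make your route rigorous you would have to reproduce this case analysis at a movable cut point --- at which stage you will have reconstructed the paper's proof; the Jitomirskaya--Last inequality alone, as you invoke it, does not deliver the convergence of the Weyl boundary data in the degenerate cases.
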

	The proof of Proposition \ref{L_prop} mostly follows the proof of \cite[Theorem 1.4]{L}, and so it is given in the Appendix.
	\section{Eigenfunction expansion for Jacobi operators on $\ell^2\left(\mathbb{Z}\right)$}
	Let $J$ be a Jacobi operator on $\ell^2\left(\mathbb{Z}\right)$. Our goal in this section is to obtain an eigenfunction expansion of the singular part of $J$ following the lines of Berezanski\u{i} \cite{Ber} with slight modifications. Throughout this section, we denote the spectral measure of $J$ by $P$. We will also denote $\mu=\mu_{\delta_0}+\mu_{\delta_1}$, and $S$ the support of $\mu$ which is given by Theorem \ref{sub_thm_line}. We will use the following theorem:
	\begin{prop}\emph{(\cite[Chapter V, Theorem 1.1]{Ber})}\label{bdd_trace_eig_exp_thm}
		Let $\Theta$ be a NOVM which has bounded trace, and  let $\rho$ be a positive Borel measure which is in the measure class of $\Theta$. Then there exists an operator-valued function $\Psi:\mathbb{R}\to\mathcal{B}\left(\mathcal{H}\right)$ which is defined $\rho$-almost everywhere, such that for any Borel set $B\subseteq\mathbb{R}$,
		\begin{equation}\label{bdd_tr_int_eq}
			\Theta\left(B\right)=\int_B\Psi\left(E\right)d\rho\left(E\right),
		\end{equation}
		where the Riemann-Stieltjes integral in (\ref{bdd_tr_int_eq}) converges in operator norm. For $\rho$-almost every $E\in\mathbb{R}$, The operator $\Psi$ can be taken to be the weak limit of $\frac{\Theta\left(I_{E,n}\right)}{\rho\left(I_{E,n}\right)}$, where $I_{E,n}=\left(E-\frac{1}{n},E+\frac{1}{n}\right)$. In particular, for $\rho$-almost every $E\in\mathbb{R}$, the weak limit exists.
	\end{prop}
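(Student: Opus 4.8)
The plan is to prove this as an operator-valued Radon--Nikodym / differentiation statement, where the hypothesis of bounded trace is exactly what lets us control everything in the trace norm. First I would reduce to scalar measures: for $\varphi,\psi\in\mathcal{H}$ set $\Theta_{\varphi,\psi}(B):=\langle\Theta(B)\varphi,\psi\rangle$, a complex Borel measure. Since $\rho$ is in the measure class of $\Theta$, $\rho(B)=0$ forces $\Theta(B)=0$ and hence $\Theta_{\varphi,\psi}(B)=0$, so $\Theta_{\varphi,\psi}\ll\rho$ for every pair. Likewise the scalar measure $\tau(B):=\tr\Theta(B)$, which is finite on bounded sets by hypothesis, satisfies $\tau\ll\rho$, with density $g:=\frac{d\tau}{d\rho}$ finite for $\rho$-almost every $E$.

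Next I would produce the operator $\Psi(E)$ as the asserted weak limit. For $n$ with $\rho(I_{E,n})>0$ put $\Psi_n(E):=\Theta(I_{E,n})/\rho(I_{E,n})$; this is a nonnegative operator, so $\|\Psi_n(E)\|\le\tr\Psi_n(E)=\tau(I_{E,n})/\rho(I_{E,n})$. The Besicovitch differentiation theorem for the scalar measures $\tau,\rho$ gives $\tr\Psi_n(E)\to g(E)<\infty$, and the Hardy--Littlewood maximal inequality yields $\sup_n\|\Psi_n(E)\|<\infty$ for $\rho$-almost every $E$. Fixing a countable dense set $D\subseteq\mathcal{H}$ and applying scalar differentiation to each $\Theta_{\varphi,\psi}$ with $\varphi,\psi\in D$, one obtains a single $\rho$-full set on which $\langle\Psi_n(E)\varphi,\psi\rangle\to\frac{d\Theta_{\varphi,\psi}}{d\rho}(E)$ for all $\varphi,\psi\in D$. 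The uniform bound $\sup_n\|\Psi_n(E)\|<\infty$ then upgrades this, by a $3\varepsilon$ density argument, to weak convergence $\Psi_n(E)\to\Psi(E)$ for all vectors, where $\Psi(E)$ is a nonnegative operator with $\langle\Psi(E)\varphi,\psi\rangle=\frac{d\Theta_{\varphi,\psi}}{d\rho}(E)$ and $\tr\Psi(E)=g(E)$ (the trace identity following by summing the diagonal Radon--Nikodym derivatives). In particular $\langle\Theta(B)\varphi,\psi\rangle=\int_B\langle\Psi(E)\varphi,\psi\rangle\,d\rho(E)$, so that $\Theta(B)=\int_B\Psi\,d\rho$ holds in the weak sense, and by construction $\Psi(E)=\wlim_n\Theta(I_{E,n})/\rho(I_{E,n})$ $\rho$-almost everywhere, which is the concrete description in the statement.

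It remains to upgrade the weak integral representation to operator-norm convergence of the Riemann--Stieltjes sums, and this is the step I expect to be the main obstacle; it is exactly here that bounded trace is indispensable. Since $\int_B\tr\Psi(E)\,d\rho(E)=\int_B g\,d\rho=\tau(B)<\infty$ on bounded $B$, and $E\mapsto\Psi(E)$ is weakly --- hence, by separability of the trace ideal, strongly --- measurable, the map $\Psi$ is locally Bochner integrable into the trace-class ideal $\mathcal{B}_1(\mathcal{H})$, and $\Theta(B)=\int_B\Psi\,d\rho$ becomes a genuine trace-norm Bochner integral. Writing a Riemann--Stieltjes sum as $\sum_k\Psi(t_k)\rho(\Delta_k)$ with $\Delta_k=[x_{k-1},x_k)$ and subtracting $\Theta([a,b])=\sum_k\int_{\Delta_k}\Psi\,d\rho$ reduces the error to $\sum_k\int_{\Delta_k}\|\Psi(E)-\Psi(t_k)\|_1\,d\rho(E)$, whose smallness as the mesh shrinks is a vector-valued Lebesgue-point argument in the separable Banach space $\mathcal{B}_1(\mathcal{H})$; because the trace norm dominates the operator norm, the resulting trace-norm convergence gives the operator-norm convergence claimed in (\ref{bdd_tr_int_eq}). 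The two delicate points --- the simultaneous definedness of $\Psi(E)$ as a single bounded operator for all vectors at once, and the passage from the weak representation to operator-norm convergence --- both hinge on the trace bound, which forces the densities into the trace class and thereby into a separable Banach space where the differentiation machinery applies.
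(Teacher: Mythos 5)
Your reduction to scalar measures, the bound $\|\Psi_n\left(E\right)\|\leq\tr\Psi_n\left(E\right)\to g\left(E\right)$, the passage from a countable dense set to all vectors via the uniform bound, and the identity $\tr\Psi\left(E\right)=g\left(E\right)$ are all correct, and they are essentially the argument of the source: the paper gives no proof of this proposition beyond citing \cite[Chapter V, Theorem 1.1]{Ber} and remarking that Berezanski\u{i}'s proof for $\rho=\tr\Theta$ carries over verbatim to any $\rho$ in the measure class, which is exactly the generalization you carry out (modulo the standing assumptions, satisfied here, that $\mathcal{H}$ is separable and $\rho$ is locally finite, without which the interval ratios $\Theta\left(I_{E,n}\right)/\rho\left(I_{E,n}\right)$ need not detect the Radon--Nikodym derivative).

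The genuine gap is in your final step. A vector-valued Lebesgue-point argument cannot give convergence of the Riemann--Stieltjes sums $\sum_k\Psi\left(t_k\right)\rho\left(\Delta_k\right)$ as the mesh shrinks with \emph{arbitrary} tags: the Lebesgue-point property at $t_k$ is an asymptotic statement for each fixed tag, with no uniformity in $t_k$, so your error bound $\sum_k\int_{\Delta_k}\|\Psi\left(E\right)-\Psi\left(t_k\right)\|_1\,d\rho\left(E\right)$ need not tend to zero. In fact mesh convergence with arbitrary tags is false in general: on $\mathcal{H}=\mathbb{C}^2$ take $\Theta\left(B\right)=\diag\left(\mu_1\left(B\right),\mu_2\left(B\right)\right)$ with $\mu_1=\sum_j 2^{-j}\delta_{q_j}$ supported on an enumeration of the rationals in $\left[0,1\right]$, $\mu_2$ Lebesgue on $\left[0,1\right]$, and $\rho=\mu_1+\mu_2$. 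Then necessarily $\Psi=\diag\left(1,0\right)$ at every atom (a set of positive $\rho$-mass, so no null-set modification helps) and $\Psi=\diag\left(0,1\right)$ at $\mu_2$-a.e.\ point, and choosing every tag rational gives $\sum_k\Psi\left(t_k\right)\rho\left(\Delta_k\right)=\diag\left(\rho\left(\left[0,1\right)\right),0\right)=\diag\left(2,0\right)\neq\diag\left(1,1\right)=\Theta\left(\left[0,1\right)\right)$ for \emph{every} partition; taking tags $q_j$ with $2^{-j}$ much smaller than the mesh shows concretely that the Lebesgue-point rates are not uniform. The convergence claim in (\ref{bdd_tr_int_eq}) must therefore be read in the sense you in fact established: the density is Bochner integrable in $\mathcal{B}_1\left(\mathcal{H}\right)$ (trace norm dominating operator norm), equivalently one has the weak representation together with the uniform bound $\|\Psi\left(E\right)\|\leq g\left(E\right)$ --- and this weaker form is all the paper ever uses (in (\ref{weak_conv_eq})--(\ref{weak_conv_int_eq}) only weak convergence and the bound $\|\Phi\left(E\right)\|\leq1$ enter). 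With the Riemann--Stieltjes over-claim replaced by the Bochner-integral statement you already proved, your argument is complete.
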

	\begin{remark}
		In \cite{Ber}, the theorem is proven for $\rho=\tr\Theta$. The proof remains exactly the same when one takes $\rho$ to be an arbitrary measure which is in the measure class of $\Theta$.
	\end{remark}
	Our goal is to obtain a formula similar to (\ref{bdd_tr_int_eq}) with $P_s$ instead of $\Theta$, where $P_s$ is the singular part of the spectral measure of $J$. To that end, let us introduce the operator $A:D\left(A\right)\subset\ell^2\left(\mathbb{Z}\right)\to\ell^2\left(\mathbb{Z}\right)$ which is given by
	\begin{center}
		$\left(A\psi\right)\left(n\right)=n\psi\left(n\right)$
	\end{center}
	and is defined on the maximal domain
	\begin{center}
		$D\left(A\right)\coloneqq\left\{\psi\in\ell^2\left(\mathbb{Z}\right):A\psi\in\ell^2\left(\mathbb{Z}\right)\right\}$.
	\end{center}
	Note that $D\left(A\right)$ contains all of the sequences which are compactly supported. Also note that $A$ has a bounded and self-adjoint inverse.
	\begin{claim}\label{NOVM_claim}
		For any Borel set $B\subseteq\mathbb{R}$, let $\Theta\left(B\right)\coloneqq A^{-1}P\left(B\right)A^{-1}$. Then $\Theta$ is a NOVM with a bounded trace.
	\end{claim}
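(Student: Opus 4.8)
The plan is to verify separately the three conditions in the definition of a NOVM and then the bounded-trace condition; everything except the last is a formal consequence of $A^{-1}$ being bounded and self-adjoint and of $P$ being a PVM, so I expect the bounded-trace estimate to carry all the content.

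First I would record the pointwise algebraic properties of $\Theta$. Since $P\left(\emptyset\right)=0$, we immediately get $\Theta\left(\emptyset\right)=A^{-1}P\left(\emptyset\right)A^{-1}=0$. Because $A^{-1}$ is self-adjoint and $P\left(B\right)$ is an orthogonal projection (hence self-adjoint), each operator $\Theta\left(B\right)=A^{-1}P\left(B\right)A^{-1}$ is self-adjoint, and for every $\psi\in\ell^2\left(\mathbb{Z}\right)$,
\begin{equation*}
	\langle\Theta\left(B\right)\psi,\psi\rangle=\langle P\left(B\right)A^{-1}\psi,A^{-1}\psi\rangle=\|P\left(B\right)A^{-1}\psi\|^2\geq 0,
\end{equation*}
so $\Theta\left(B\right)\geq 0$. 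For countable additivity, I would fix a disjoint family $\left(B_j\right)_{j\in\mathbb{N}}$ and $\psi,\varphi\in\ell^2\left(\mathbb{Z}\right)$. Since $P$ is a PVM it is strongly countably additive, so $P\left(\bigcup_j B_j\right)A^{-1}\psi=\sum_j P\left(B_j\right)A^{-1}\psi$ with convergence in norm; pairing this identity against the fixed vector $A^{-1}\varphi$ yields
\begin{equation*}
	\langle\Theta\left(\bigcup_j B_j\right)\psi,\varphi\rangle=\sum_j\langle P\left(B_j\right)A^{-1}\psi,A^{-1}\varphi\rangle=\sum_j\langle\Theta\left(B_j\right)\psi,\varphi\rangle,
\end{equation*}
which is precisely the weak convergence $\Theta\left(\bigcup_j B_j\right)=\sum_j\Theta\left(B_j\right)$ demanded by the definition.

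The substantive point is the bounded trace, and by the remark following the definition of bounded trace (independence of the orthonormal basis for a nonnegative operator) it suffices to estimate the trace in the standard basis $\left(\delta_n\right)_{n\in\mathbb{Z}}$. Using $P\left(B\right)=P\left(B\right)^*P\left(B\right)$ together with the self-adjointness of $A^{-1}$, I would factor $\Theta\left(B\right)=\left(P\left(B\right)A^{-1}\right)^*\left(P\left(B\right)A^{-1}\right)$, so that
\begin{equation*}
	\tr\Theta\left(B\right)=\sum_{n\in\mathbb{Z}}\|P\left(B\right)A^{-1}\delta_n\|^2\leq\sum_{n\in\mathbb{Z}}\|A^{-1}\delta_n\|^2.
\end{equation*}
The crux is that the right-hand side is finite: $A^{-1}$ acts diagonally, and its diagonal entries are square-summable (the tail being controlled by $\sum_{n\neq 0}n^{-2}<\infty$), so $A^{-1}$ is Hilbert--Schmidt and $\tr\Theta\left(B\right)\leq\|A^{-1}\|_{\mathrm{HS}}^2<\infty$ for every Borel set $B$. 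I expect this Hilbert--Schmidt estimate to be the only nonformal step, and it is exactly the square-summability built into the weight $n^2$ (the space $\ell^2\left(\mathbb{Z};n^2\right)$) that makes it work; the remaining verifications reduce to $A^{-1}$ being bounded self-adjoint and $P$ a PVM.
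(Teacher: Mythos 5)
Your proof is correct and takes essentially the same route as the paper: the NOVM axioms follow formally from $A^{-1}$ being bounded, self-adjoint and positive together with $P$ being a PVM (the paper dispatches this in one line), and your Hilbert--Schmidt bound $\tr\Theta\left(B\right)\leq\sum_{n}\|A^{-1}\delta_n\|^2=\sum_{n\neq 0}n^{-2}<\infty$ is just a repackaging of the paper's direct computation $\langle\Theta\left(B\right)\delta_n,\delta_n\rangle=\frac{1}{n^2}\langle P\left(B\right)\delta_n,\delta_n\rangle\leq\frac{1}{n^2}$. The only substantive content in both arguments is the diagonal action of $A^{-1}$ on the standard basis combined with the summability of $n^{-2}$.
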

	\begin{proof}
		The fact that $P$ is a PVM along with $A^{-1}$ being a positive operator imply that $\Theta$ is a NOVM. We turn to show that $\Theta$ has bounded trace. Let $B\subseteq\mathbb{R}$ be a Borel set.
		\begin{equation*}
			\begin{split}
			\sum\limits_{n\in\mathbb{Z}}\langle\Theta\left(B\right)\delta_n,\delta_n\rangle=\sum\limits_{n\in\mathbb{Z}}\langle A^{-1}P\left(B\right)A^{-1}\delta_n,\delta_n\rangle=\sum\limits_{n\in\mathbb{Z}}\frac{1}{n^2}\langle P\left(B\right)\delta_n,\delta_n\rangle\leq\sum\limits_{n\in\mathbb{Z}}\frac{1}{n^2}<\infty,
			\end{split}
		\end{equation*}
		where the last equality follows from the fact that $P\leq\text{Id}$.
	\end{proof}
	By Claim \ref{NOVM_claim}, $\Theta$ satisfies the conditions of Proposition \ref{bdd_trace_eig_exp_thm}. In addition, $\mu$ is in the measure class of $\Theta$ and so (\ref{bdd_tr_int_eq}) (with $\rho=\mu$) holds for some operator-valued function $\Psi$ which is defined $\mu$-almost everywhere. Furthermore, $\Psi$ is given by
	\begin{center}
		$\Psi=\underset{n\to\infty}{\wlim}\frac{\Theta\left(I_{E,n}\right)}{\mu\left(I_{E,n}\right)}$,
	\end{center}
	and so for $\mu$-almost every $E\in\mathbb{R}$, we have
	\begin{equation}\label{mat_rep_psi_eq}
		\langle\Psi\left(E\right)\delta_k,\delta_j\rangle=\underset{n\to\infty}{\lim}\frac{1}{\mu\left(I_{E,n}\right)}\langle T^{-1}\Theta\left(I_{E,n}\right)T^{-1}\delta_k,\delta_j\rangle=\underset{n\to\infty}{\lim}\frac{1}{k^2}\frac{\mu_{kj}\left(I_{E,n}\right)}{\mu\left(I_{E,n}\right)}=\frac{1}{k^2}\frac{d\mu_{kj}}{d\mu}\left(E\right).
	\end{equation}
	By Theorem \ref{pol_thm} and Proposition \ref{L_prop}, there exists a Borel set $\widetilde{S}\subseteq S$ such that $\mu_s\left(S\setminus\widetilde{S}\right)=0$, and for any $E\in\widetilde{S}$, $k,j\in\mathbb{Z}$, we have
	\begin{equation}\label{S_tilde_eq}
		\underset{n\to\infty}{\lim}\frac{\mu_{kj}\left(I_{E,n}\right)}{\mu\left(I_{E,n}\right)}=\underset{\epsilon\to0}{\lim}\frac{M_{kj}\left(E+i\epsilon\right)}{M\left(E+i\epsilon\right)}=u_E\left(k\right)u_E\left(j\right),
	\end{equation} where $u_E$ is the subordinate solution of (\ref{ev_eq_line}) which satisfies $\left|u_E\left(0\right)\right|^2+\left|u_E\left(1\right)\right|^2=1$. We can now prove Theorem \ref{main_thm_1_intro}.
\begin{proof}
	Define $\mathcal{H}_\pm\coloneqq\ell^2\left(\mathbb{Z};\left|n\right|^{\pm 2}\right)$. For any $\varphi\in\mathcal{H}$, $A\varphi\in\mathcal{H}_-$ and in addition, $\mathcal{H}_+=D\left(A\right)$. Thus, for any $E\in\widetilde{S}$, the operator $\Phi\left(E\right)\coloneqq A\Psi\left(E\right)A$ is defined from $\mathcal{H}_+$ to $\mathcal{H}_-$. Furthermore, for any $u,v\in\mathcal{H}_+$, we have
	\begin{center}
		$\langle\Psi\left(E\right)Au,Av\rangle=\langle A\Psi\left(E\right)Au,v\rangle=\langle \Phi\left(E\right)u,v\rangle$
	\end{center}
	and so for any Borel set $B\subseteq\mathbb{R}$,
	\begin{equation}\label{weak_conv_eq}
		\begin{split}
			\langle P\left(B\right)u,v\rangle=\langle P\left(B\right)A^{-1}Au,A^{-1}Av\rangle=\langle A^{-1}P\left(B\right)A^{-1}Au,Av\rangle=\\=\langle\Theta\left(B\right)Au,Av\rangle=\int_B\langle\Psi\left(E\right)Au,Av\rangle d\mu\left(E\right)=\int_B\langle\Phi\left(E\right)u,v\rangle d\mu\left(E\right).
		\end{split}
	\end{equation}
	Thus, the integral
	\begin{equation}\label{weak_conv_int_eq}
		P\left(B\right)=\int_B \Phi\left(E\right)d\mu\left(E\right)
	\end{equation}
	converges weakly. Note that $\Phi\left(E\right)\in\mathcal{B}\left(\mathcal{H}_+,\mathcal{H}_-\right)$ and in addition, \mbox{$\|\Phi\left(E\right)\|\leq 1$}. Thus, the weak convergence of (\ref{weak_conv_int_eq}) implies convergence in the operator norm of $\mathcal{B}\left(\mathcal{H}_+,\mathcal{H}_-\right)$. Furthermore, by (\ref{mat_rep_psi_eq}) and (\ref{S_tilde_eq}), for $\mu_s$-almost every $E\in\mathbb{R}$, $\Phi\left(E\right)=\langle u_E,\cdot\rangle u_E$, and so for any Borel set $B$ and $u\in\mathcal{H}_+$, we get
	\begin{center}
		$P_s\left(B\right)u=\int_{B\cap S}\Phi\left(E\right)ud\mu_s\left(E\right)=\int_{B\cap S}\langle u_E,u\rangle u_Ed\mu_s\left(E\right)$,
	\end{center}
	as required.
\end{proof}
	\section{$J_s$ as an integral of half-line operators}\label{integral_sec}
	The notation used in the prior section applies throughout this section as well. We will also denote $I\coloneqq\left[0,\pi\right)$.
	\subsection{The correlative spectral resolution}
	In this subsection, we will define a projection-valued measure $Q$ on $\left[0,\pi\right)$ which, in some sense, measures the correlation between the restrictions of $J$ to the half-lines $\mathbb{Z}_\pm$. Recall that $S\subseteq\mathbb{R}$ is the set of all $E\in\mathbb{R}$ for which there exists a subordinate solution of (\ref{ev_eq_line}), and that $P_s$ is supported on the set $\widetilde{S}\subseteq S$ which was defined in the previous section. Denote by $J_{\pm}$ the operators
	\begin{center}
		$J_{\pm}=P_{\mathbb{Z}_{\pm}}JP_{\mathbb{Z}_{\pm}}$
	\end{center}
	where $\mathbb{Z}_+=\left\{1,2,\ldots\right\}$ and $\mathbb{Z}_-=\mathbb{Z}_+^c$. For any $\theta\in\left[0,\pi\right)$, we define $J_+^\theta=J_+-\tan\theta\langle\delta_1,\cdot\rangle\delta_1$, and $J_-^{\theta}=J_--\cot\theta\langle\delta_0,\cdot\rangle\delta_0$.
	Recall that for any $E\in\mathbb{R}$ and $\theta\in\left[0,\pi\right)$, $u_{\theta,E}$ is the solution of (\ref{ev_eq1}) which satisfies (\ref{bc_eq_hl}), and let 
	\begin{center}
		$S_{\theta}^{\pm}=\left\{E\in\mathbb{R}:u_{\theta,E}^\pm\text{ is subordinate}\right\}$.
	\end{center}
	Let us denote $S_\theta\coloneqq S_-^\theta\cap S_+^\theta$. It is not hard to see that
	$S=\underset{\theta\in I}{\bigcup}S_\theta$. In addition, by (\ref{S_tilde_eq}) and (\ref{bc_eq_hl}), for any $\theta\in\left[0,\pi\right)$ and for any $E\in\widetilde{S}$,
	\begin{equation}\label{measurability_eq}
		E\in S_\theta\iff\underset{\epsilon\to0}{\lim}\frac{M_0\left(E+i\epsilon\right)}{M\left(E+i\epsilon\right)}=\frac{d\mu_{00}}{d\mu}\left(E\right)=\cos\theta.
	\end{equation}
	\begin{prop}
		For any Borel set $B\subseteq I$, the set $S_B\coloneqq\underset{\theta\in B}{\bigcup} S_\theta\cap\widetilde{S}$ is Borel measurable.
	\end{prop}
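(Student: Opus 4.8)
The plan is to reduce everything to the measurability of one scalar function and then realize $S_B$ as its preimage under a Borel map. First I would record that, by (\ref{measurability_eq}), for $E \in \widetilde{S}$ membership in $S_\theta$ depends on $E$ only through the single quantity $g(E) := \frac{d\mu_{00}}{d\mu}(E)$: namely $E \in S_\theta$ if and only if $\cos\theta = g(E)$. So the whole statement hinges on understanding this one function $g$ together with the map $\theta \mapsto \cos\theta$.

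Next I would check that $g$ can be taken to be an honest Borel function on $\widetilde{S}$ with values in $[0,1]$. Boundedness is Lemma \ref{bdd_der_lemma} ($|g| \le 1$), and nonnegativity follows since $\mu_{00} = \mu_{\delta_0}$ is a nonnegative measure dominated by $\mu$, so $0 \le g \le 1$. For measurability I would not use the $\mu$-a.e.\ defined derivative directly but rather the representative
\[
g(E) = \lim_{n\to\infty}\frac{\mu_{00}(I_{E,n})}{\mu(I_{E,n})},
\]
each quotient being Borel in $E$ and the pointwise limit of Borel functions being Borel; by (\ref{S_tilde_eq}) this limit exists (and equals the derivative) at \emph{every} point of $\widetilde{S}$, not merely almost everywhere.

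The key structural point is that $\theta \mapsto \cos\theta$ is strictly decreasing, hence injective, on $I = [0,\pi)$, with range $(-1,1] \supseteq [0,1]$. Therefore for each $E \in \widetilde{S}$ there is exactly one $\theta \in I$ with $\cos\theta = g(E)$, namely $\theta(E) := \arccos(g(E)) \in [0,\tfrac{\pi}{2}]$; in words, each $E \in \widetilde{S}$ lies in $S_\theta$ for a single, uniquely determined $\theta$. The resulting map $\theta \colon \widetilde{S} \to I$ is Borel measurable, being $\arccos$ (continuous) composed with the Borel function $g$. Using this uniqueness I would then write
\[
S_B = \bigcup_{\theta \in B}\left(S_\theta \cap \widetilde{S}\right) = \left\{E \in \widetilde{S} : \theta(E) \in B\right\} = \theta^{-1}(B),
\]
and since $\widetilde{S}$ is Borel, $\theta$ is Borel, and $B$ is Borel, the preimage $\theta^{-1}(B)$ is Borel, as desired.

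I expect the only genuinely delicate step to be the second one: promoting the Radon--Nikodym derivative $\frac{d\mu_{00}}{d\mu}$, which a priori is defined only up to $\mu$-null sets, to a Borel function that is correct at every point of $\widetilde{S}$. This is precisely what Proposition \ref{L_prop} and Theorem \ref{pol_thm} buy us through (\ref{S_tilde_eq}), since they guarantee that the boundary limit exists pointwise on $\widetilde{S}$. Once $g$ is under control, the remaining steps --- injectivity of $\cos$ on $[0,\pi)$, stability of Borel measurability under pointwise limits and composition with continuous maps, and rewriting the union as a preimage via the uniqueness of $\theta(E)$ --- are routine.
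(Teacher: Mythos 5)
Your proposal is correct and takes essentially the same route as the paper's own (very terse) proof: both use (\ref{measurability_eq}) to reduce membership in $S_\theta$, for $E\in\widetilde{S}$, to the single scalar function $f=\frac{d\mu_{00}}{d\mu}$ together with the injectivity of $\cos$ on $\left[0,\pi\right)$, the paper writing $S_B=f^{-1}\left(\cos\left(B\right)\right)$ while you write the equivalent $S_B=\left(\arccos\circ f\right)^{-1}\left(B\right)$. Your version merely makes explicit two points the paper leaves implicit --- that $f$ admits a Borel representative defined at \emph{every} point of $\widetilde{S}$ as the pointwise limit in (\ref{S_tilde_eq}), and that injectivity of $\cos$ on $\left[0,\pi\right)$ is exactly what lets one trade the forward image $\cos\left(B\right)$ for a Borel preimage --- so it is a slightly more careful rendering of the same argument.
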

	\begin{proof}
		Denote $U=\cos\left(B\right)$ and let $f=\frac{d\mu_{00}}{d\mu}\left(E\right)$ given by (\ref{measurability_eq}). Then $S_B=f^{-1}\left(U\right)$, which is a measurable set.
	\end{proof}
	Now, for any Borel set $B\subseteq I$, we can define
	\begin{center}
		Q$\left(B\right)=P\left(S_B\right)$.
	\end{center}
	\begin{claim}
		$Q$ is a projection-valued measure which commutes with $J$.
	\end{claim}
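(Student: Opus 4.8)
The plan is to deduce every property of $Q$ from the corresponding property of the spectral measure $P$, using the single observation that the family $\left\{S_\theta\cap\widetilde{S}\right\}_{\theta\in I}$ consists of the level sets of one measurable function. Concretely, I would set $f\coloneqq\frac{d\mu_{00}}{d\mu}$, which is defined on $\widetilde{S}$ and takes values in $[-1,1]$ by Lemma \ref{bdd_der_lemma}. By (\ref{measurability_eq}), for every $E\in\widetilde{S}$ and every $\theta\in I$ we have $E\in S_\theta\iff f(E)=\cos\theta$; hence $S_\theta\cap\widetilde{S}=f^{-1}\left(\left\{\cos\theta\right\}\right)$ and, more generally, for any Borel $B\subseteq I$, $S_B=f^{-1}\left(\cos(B)\right)$, exactly as in the preceding proposition. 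The key point is that this is an everywhere-on-$\widetilde{S}$ statement, so these are genuine set identities rather than identities modulo null sets.

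The engine of the proof is the injectivity of $\cos$ on $I=[0,\pi)$. From injectivity I would extract two facts. First, $\cos(B_1\cap B_2)=\cos(B_1)\cap\cos(B_2)$ for all Borel $B_1,B_2\subseteq I$; applying $f^{-1}$ yields the set identity $S_{B_1\cap B_2}=S_{B_1}\cap S_{B_2}$. Second, if $(B_j)_j$ are pairwise disjoint then the images $\cos(B_j)$ are pairwise disjoint, so the sets $S_{B_j}=f^{-1}\left(\cos(B_j)\right)$ are pairwise disjoint and $S_{\bigcup_j B_j}=f^{-1}\left(\bigcup_j\cos(B_j)\right)=\bigsqcup_j S_{B_j}$. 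Borel measurability of $\cos(B)$, and hence of $S_B$, is already recorded in the preceding proposition.

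With these identities in hand, the PVM axioms transfer directly. Since $P$ is a PVM, $Q(B)=P(S_B)$ is an orthogonal projection for every $B$, and $Q(\emptyset)=P(\emptyset)=0$. Countable additivity follows from the second identity together with the weak $\sigma$-additivity of $P$: $Q\left(\bigcup_j B_j\right)=P\left(\bigsqcup_j S_{B_j}\right)=\sum_j P(S_{B_j})=\sum_j Q(B_j)$. The multiplicativity $Q(B_1\cap B_2)=Q(B_1)Q(B_2)$ follows from the first identity and the PVM relation $P(A_1\cap A_2)=P(A_1)P(A_2)$, since $Q(B_1)Q(B_2)=P(S_{B_1})P(S_{B_2})=P(S_{B_1}\cap S_{B_2})=P(S_{B_1\cap B_2})=Q(B_1\cap B_2)$. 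Thus $Q$ is a PVM. For the commutation statement, every spectral projection $P(A)=\chi_A(J)$ is a bounded function of $J$ and hence commutes with $J$; taking $A=S_B$ shows that $Q(B)$ commutes with $J$ for all $B$.

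I expect the only genuinely delicate point to be the bookkeeping of null sets: one must ensure that the level-set description $S_B=f^{-1}(\cos B)$ and the resulting identities hold as honest set equalities, not merely up to $\mu_s$-null sets, so that the exact multiplicativity and countable additivity of $P$ can be invoked. This is guaranteed precisely because (\ref{measurability_eq}) is asserted for every $E\in\widetilde{S}$ and $\cos$ is injective on $I$; beyond that, the argument is a purely formal transport of the projection-valued structure from $P$ to $Q$.
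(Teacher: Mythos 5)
Your proposal is correct and takes essentially the same route as the paper, whose proof consists of the single sentence that the claim is immediate from $P$ being a projection-valued measure commuting with $J$. Your write-up simply makes explicit the bookkeeping the paper leaves implicit --- namely that, by (\ref{measurability_eq}) and the injectivity of $\cos$ on $\left[0,\pi\right)$, the map $B\mapsto S_B=f^{-1}\left(\cos\left(B\right)\right)$ preserves intersections and disjoint unions as honest set identities, so all PVM axioms and the commutation with $J$ transfer directly from $P$ to $Q$.
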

	\begin{proof}
		This is immediate by the fact that $P$ is a projection-valued measure which commutes with $J$.
	\end{proof}
	One of the main reasons to consider the projection-valued measure $Q$ is that it enables one to split $\ell^2\left(\mathbb{Z}\right)$ into a direct sum of subspaces such that on each subspace, $J$ looks like the direct sum of half-line operators. For any $\theta\in\left[0,\pi\right)$, we define $J_\theta\coloneqq J^\theta_-\oplus J^\theta_+$.
	\begin{theorem}\label{splitting_thm}
		Let $a,b\in\left(0,\frac{\pi}{2}\right)$ and let $\mathcal{F}=\vspan\left(\mathcal{F}_1\cup\mathcal{F}_2\cup\mathcal{F}_2\right)\subseteq C\left(\mathbb{R}\right)$, where
		\begin{center}
			$\mathcal{F}_1=\left\{x^n:n\in\mathbb{N}\right\}$,\\
			$\mathcal{F}_2=\left\{e^{itx}:t\in\mathbb{R}\right\}$
			\\
			$\mathcal{F}_3=\left\{\left(x-z\right)^{-1}:z\in\mathbb{C},\im z>0\right\}$.
		\end{center}
		For any $F\in\mathcal{F}$ there exists $\gamma>0$ such that for any $a\leq\alpha<\beta\leq b$, $\psi\in\mathcal{H}_+$ and $\theta\in\left[\alpha,\beta\right)$,
		\begin{equation}\label{decomp_ineq}
			\|F\left(J_\theta\right)Q\left(\left[\alpha,\beta\right)\right)\psi-F\left(J
			\right)Q\left(\left[\alpha,\beta\right)\right)\psi\|<\gamma\left|\beta-\alpha\right|\|Q\left(\left[\alpha,\beta\right)\right)\psi\|.
		\end{equation}
	\end{theorem}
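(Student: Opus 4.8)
The plan is to reduce the whole statement to a single norm estimate on the finite-rank defect $D_\theta := J - J_\theta$ and then propagate that estimate through the three families generating $\mathcal{F}$. Because $J_\pm = P_{\mathbb{Z}_\pm}JP_{\mathbb{Z}_\pm}$ discards only the hopping term $a_0$ linking sites $0$ and $1$, while $J_\theta$ additionally perturbs the diagonal at those two sites, $D_\theta$ is supported on $\vspan\{\delta_0,\delta_1\}$ and has matrix $\bigl(\begin{smallmatrix}\cot\theta & a_0\\ a_0 & \tan\theta\end{smallmatrix}\bigr)$ in that basis. In particular $\|D_\theta v\|^2 = |\cot\theta\,v(0)+a_0 v(1)|^2 + |a_0 v(0)+\tan\theta\,v(1)|^2$ depends only on the two boundary values $v(j)=\langle\delta_j,v\rangle$, which is what makes the eigenfunction expansion the right tool.

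The heart of the matter is the following key lemma: there is $\gamma_0>0$, depending only on $a,b$ and $\|J\|$, such that $\|D_\theta v\|\leq\gamma_0|\beta-\alpha|\,\|v\|$ for every $\theta\in[\alpha,\beta)\subseteq[a,b]$ and every $v$ in the range of $Q([\alpha,\beta))$. To prove it I would take $v=Q([\alpha,\beta))\psi$ and feed it into Theorem \ref{main_thm_1_intro}: since $S_{[\alpha,\beta)}\subseteq\widetilde S$ is Lebesgue-null we have $Q([\alpha,\beta))=P_s(S_{[\alpha,\beta)})$, so $v(j)=\int_{S_{[\alpha,\beta)}}\langle u_E,\psi\rangle u_E(j)\,d\mu_s(E)$ while $\|v\|^2=\int_{S_{[\alpha,\beta)}}|\langle u_E,\psi\rangle|^2\,d\mu_s(E)$. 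Each boundary combination entering $\|D_\theta v\|$ then has the form $\int_{S_{[\alpha,\beta)}}\langle u_E,\psi\rangle\,b(E)\,d\mu_s(E)$ with $b(E)=\cot\theta\,u_E(0)+a_0u_E(1)$ (and its partner). Once I have a pointwise bound $|b(E)|\leq\gamma_0|\beta-\alpha|$ on $S_{[\alpha,\beta)}$, Cauchy--Schwarz and the finiteness of $\mu_s$ (here $\mu_s(\mathbb{R})\leq\mu(\mathbb{R})=2$) give $\bigl|\int\langle u_E,\psi\rangle b\,d\mu_s\bigr|\leq\gamma_0|\beta-\alpha|\,\mu_s(\mathbb{R})^{1/2}\|v\|$, which is the claimed estimate after absorbing the harmless constant.

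The pointwise bound is where I expect the real difficulty to lie. For $E\in S_{[\alpha,\beta)}$ there is a matched parameter $\theta(E)\in[\alpha,\beta)$ determined through (\ref{measurability_eq}) by the boundary data of $u_E$, and the subordinacy of $u_E$ at $\pm\infty$ should say exactly that $u_E$ solves the boundary equations of $J_{\theta(E)}$ at sites $0$ and $1$, i.e. $D_{\theta(E)}u_E=0$. Granting this, I would write $\cot\theta\,u_E(0)+a_0u_E(1)=(\cot\theta-\cot\theta(E))u_E(0)$ and analogously for the other factor, and conclude $|b(E)|\leq L\,|\theta-\theta(E)|\leq L\,|\beta-\alpha|$ using $|u_E(0)|,|u_E(1)|\leq1$ and the Lipschitz constant $L$ of $\cot$ and $\tan$ on the compact interval $[a,b]\subset(0,\tfrac{\pi}{2})$. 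The main obstacle is making precise that $u_E$ is a generalized eigenfunction of $J_{\theta(E)}$ and not merely of $J$: this amounts to verifying that \emph{both} components of $D_{\theta(E)}u_E$ vanish, which is a genuine constraint coupling the normalization of $u_E$ from Proposition \ref{L_prop} to the boundary conditions defining $J_\pm^\theta$ and, crucially, to the value of the coupling $a_0$. Reconciling these --- i.e. checking that the correspondence (\ref{measurability_eq}) is the one for which the matching identity holds for general $a_0$ --- is the step I would spend the most care on; everything downstream is routine.

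Finally I would propagate the key lemma to all of $\mathcal{F}$, using that $Q([\alpha,\beta))$ commutes with $J$, so its range is invariant under every bounded function of $J$. For a monomial I would telescope $J^n-J_\theta^n=\sum_{k=0}^{n-1}J_\theta^{k}D_\theta J^{\,n-1-k}$, apply each $J^{\,n-1-k}$ to $Q([\alpha,\beta))\psi$ (still in the invariant subspace), invoke the key lemma, and bound the remaining $J_\theta^{k}$ by $\|J_\theta\|^{k}$, yielding a constant of order $\gamma_0\,n\,\|J\|^{\,n-1}$. For a resolvent I would use $(J-z)^{-1}-(J_\theta-z)^{-1}=-(J_\theta-z)^{-1}D_\theta(J-z)^{-1}$ and control each resolvent by $(\im z)^{-1}$. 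For an exponential I would use Duhamel, $e^{itJ}-e^{itJ_\theta}=i\int_0^t e^{i(t-s)J_\theta}D_\theta e^{isJ}\,ds$, where each $e^{isJ}$ preserves the invariant subspace and each $e^{i(t-s)J_\theta}$ is unitary, so integrating the key lemma over $s\in[0,t]$ produces the factor $|t|$. In every case one obtains $\gamma|\beta-\alpha|\,\|Q([\alpha,\beta))\psi\|$ with $\gamma$ depending on the chosen generator, and the general $F\in\mathcal{F}$ follows by linearity with $\gamma$ read off from the coefficients of $F$.
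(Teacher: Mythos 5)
Your proposal is, in outline, the same proof as the paper's: the defect $D_\theta=J-J_\theta$ is supported on $\vspan\left\{\delta_0,\delta_1\right\}$, the boundary values of $Q\left(\left[\alpha,\beta\right)\right)\psi$ are expanded via Theorem \ref{main_thm_1_intro} against the subordinate solutions, the leading term is cancelled using the matched parameter $\theta\left(E\right)$, and the remainder is bounded by the Lipschitz constants of $\tan$ and $\cot$ on $\left[a,b\right]$; the propagation steps are also the paper's (telescoping/induction for $\mathcal{F}_1$, the resolvent identity (\ref{resolvent_formula}) for $\mathcal{F}_3$), except that you treat $\mathcal{F}_2$ by Duhamel where the paper sums the power series using $\gamma_n\leq CM^n$ --- both work, and your version produces the factor $\left|t\right|$ more directly. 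Two of your variations are actually improvements in rigor: your Cauchy--Schwarz step with $\mu_s\left(\mathbb{R}\right)\leq 2$ is the precise form of an estimate the paper states loosely (the paper pulls $\max\left|\tan\theta-\tan\eta\right|$ out of an oscillatory integral and then equates $\left|\int\langle\psi,\psi_E\rangle\psi_E\left(1\right)d\mu\right|^2$ with $\|Q\left(\left[\alpha,\beta\right)\right)\psi\|^2$, which literally requires your repair), and formulating the key lemma for all $v\in\text{Ran}\,Q\left(\left[\alpha,\beta\right)\right)$ lets your resolvent step bypass the paper's unproved assertion that $\mathcal{H}_+$ is invariant under $\left(J-z\right)^{-1}$ --- though you should record the density extension from $Q\left(\left[\alpha,\beta\right)\right)\mathcal{H}_+$ to $\text{Ran}\,Q\left(\left[\alpha,\beta\right)\right)$, which the paper carries out separately inside Lemma \ref{int_lemma}.

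The step you explicitly left open --- that $D_{\theta\left(E\right)}u_E=0$ for general $a_0$ --- is a genuine issue, and you were right to isolate it: it is exactly where the paper is silently non-general. The $2\times 2$ matrix $\bigl(\begin{smallmatrix}\cot\theta & a_0\\ a_0 & \tan\theta\end{smallmatrix}\bigr)$ you wrote down has determinant $1-a_0^2$, so for $a_0\neq 1$ it is injective on the boundary data and \emph{no} nonzero $u_E$ satisfies $D_\theta u_E=0$ for any $\theta$; concretely, under the boundary condition (\ref{bc_eq_hl}) one gets $D_{\theta\left(E\right)}u_E=\left(a_0-1\right)u_E\left(1\right)\left(\delta_0-\tan\theta\left(E\right)\delta_1\right)$. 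The paper's formula (\ref{diff_J_Jtheta}) is written without the factor $a_0$, i.e.\ it tacitly normalizes $a_0=1$ (consistent with the examples section, where $a\equiv 1$ is assumed). For general $a_0$ the statement survives only after rescaling the boundary couplings, e.g.\ defining $J_+^\theta=J_+-a_0\tan\theta\langle\delta_1,\cdot\rangle\delta_1$ and $J_-^\theta=J_--a_0\cot\theta\langle\delta_0,\cdot\rangle\delta_0$, after which $D_\theta\psi=a_0\left[\left(\cot\theta\,\psi\left(0\right)+\psi\left(1\right)\right)\delta_0+\left(\psi\left(0\right)+\tan\theta\,\psi\left(1\right)\right)\delta_1\right]$ and (\ref{bc_eq_hl}) kills both components simultaneously; with this (or the $a_0=1$) normalization your argument closes, and your proof and the paper's coincide modulo exactly this point.
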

	\begin{proof}
		We will prove the theorem for basis elements. The general case then easily follows using the triangle inequality. As before, for any $E\in S$, we denote by $u_E$ the subordinate solution of (\ref{ev_eq_line}) which satisfies (\ref{bc_eq_hl}). Let $0<M\in\mathbb{R}$ be some constant such that $\|J\|\leq M$ and for any $\theta\in\left[a,b\right]$, $\|J_\theta\|\leq M$. We treat each $\mathcal{F}_i$ separately.\\\textbf{First case:} For $F\in\mathcal{F}_1$, we will prove the theorem by induction on $n$. Suppose $F\in\mathcal{F}$ is given by $F\left(x\right)=x$. A straightforward computation shows that for any $\psi\in\ell^2\left(\mathbb{Z}\right)$,
		\begin{equation}\label{diff_J_Jtheta}
			\left(J_\theta-J\right)\psi=\left(\psi\left(1\right)+\cot\theta\psi\left(0\right)\right)\delta_0+\left(\psi\left(0\right)+\tan\theta\psi\left(1\right)\right)\delta_1.
		\end{equation}
		In addition, let $A=\underset{\theta\in \left[\alpha,\beta\right)}{\bigcup} S_\theta$. By Theorem \ref{main_thm_1_intro}, for any $\psi\in\mathcal{H}_+$ and for any $n\in\mathbb{Z}$,
		\begin{equation}
			\left(Q\left(\left[\alpha,\beta\right)\right)\psi\right)\left(n\right)=\int_A\langle\psi,\psi_E\rangle\psi_E\left(n\right) d\mu\left(E\right)
		\end{equation}
		and in addition,
		\begin{center}
				$\|J_\theta Q\left(\left[\alpha,\beta\right)\right)\psi-JQ\left(\left[\alpha,\beta\right)\right)\psi\|^2=\|\left(J_\theta-J\right)Q\left(\left[\alpha,\beta\right)\right)\psi\|^2=$\\ \text{}\\$=\left|\left(J-J_\theta\right)Q\left(\left[\alpha,\beta\right)\right)\psi\left(0\right)\right|^2+\left|\left(J-J_\theta\right)Q\left(\left[\alpha,\beta\right)\right)\psi\left(1\right)\right|^2.$
		\end{center}
		where the last equality follows from (\ref{diff_J_Jtheta}). Recall that $S_{\left[\alpha,\beta\right]}=\underset{\theta\in\left[\alpha,\beta\right]}{S_\theta}$ and for any $E\in S_{\left[\alpha,\beta\right]}$, let $\theta\left(E\right)$ be the unique $\theta\in\left[\alpha,\beta\right)$ such that $E\in S_\theta$. Now, note that
		\begin{center}
			$\left|\left(J-J_\theta\right)Q\left(\left[\alpha,\beta\right]\right)\psi\left(1\right)\right|^2=\left|\int_{S_{\left[\alpha,\beta\right]}}\langle\psi,\psi_E\rangle\psi_E\left(0\right)d\mu\left(E\right)+\tan\theta\int_{S_{\left[\alpha,\beta\right]}}\langle\psi,\psi_E\rangle\psi_E\left(1\right)d\mu\left(E\right)\right|^2=$\\ \text{}\\$=\left|\int_{S_{\left[\alpha,\beta\right]}}\left(\tan\theta-\tan\theta\left(E\right)\right)\langle\psi,\psi_E\rangle\psi_E\left(1\right)d\mu\left(E\right)\right|^2\leq\underset{\eta\in\left[\alpha,\beta\right]}{\max}\left|\tan\theta-\tan\eta\right|^2\left|\int_{S_{\left[\alpha,\beta\right]}}\langle\psi,\psi_E\rangle\psi_E\left(1\right)d\mu\left(E\right)\right|^2=$\\ \text{}\\$=\underset{\eta\in\left[\alpha,\beta\right]}{\max}\left|\tan\theta-\tan\eta\right|^2\|Q\left(\left[\alpha,\beta\right]\right)\psi\|^2$.
		\end{center}
		In addition, $\tan$ satisfies the Lipschitz condition on $\left[a,b\right]$ and so there exists $\gamma>0$ such that  $\left|\tan\theta-\tan\eta\right|<\gamma\left|\theta-\eta\right|$. This immediately implies the required inequality. The analysis of the second summand is done in a similar way, noting that $\cot\theta$ also satisfies the Lipschitz condition on $\left[a,b\right]$. Now suppose that the theorem holds for $x^{n-1}$ and let $F\in\mathcal{F}$ be given by $F\left(x\right)=x^n$. We have
		\begin{center}
			$\|J_\theta^n Q\left(\left[\alpha,\beta\right)\right)\psi-J^nP\left(\left[\alpha,\beta\right)\right)\psi\|=\|J_\theta^{n-1}J_\theta Q\left(\left[\alpha,\beta\right)\right)\psi-J^{n-1}JQ\left(\left[\alpha,\beta\right)\right)\psi\|\leq$\\ \text{}\\$\leq\|J_\theta^{n-1}J_\theta^nQ\left(\left[\alpha,\beta\right)\right)\psi-J_\theta^{n-1}JQ\left(\left[\alpha,\beta\right)\right)\psi\|+\|J_\theta^{n-1}JQ\left(\left[\alpha,\beta\right)\right)\psi-J^{n-1}JQ\left(\left[\alpha,\beta\right)\right)\psi\|=$\\ \text{}\\$=\|J_\theta^{n-1}\big[J_\theta Q\left(\left[\alpha,\beta\right)\right)\psi-JQ\left(\left[\alpha,\beta\right)\right)\psi\big]\|+\|J_\theta^{n-1}Q\left[\alpha,\beta\right)J\psi-J^{n-1}Q\left[\alpha,\beta\right)J\psi\|\leq$\\ \text{}\\$\leq\|J_\theta^{n-1}\|\cdot\gamma_1\left|\beta-\alpha\right|\|Q\left(\left[\alpha,\beta\right)\psi\right)\|+\gamma_2\left|\beta-\alpha\right|\|Q\left(\left[\alpha,\beta\right)\right)J\psi\|$
		\end{center}
		where the last inequality follows from the induction hypothesis for suitable constants $\gamma_1,\gamma_2$. Note that $J$ and $Q$ commute, and so we may proceed
		\begin{center}
			$\leq M^{n-1}\cdot\gamma_1\left|\beta-\alpha\right|\|Q\left(\left[\alpha,\beta\right)\right)\psi\|+\gamma_2\left|\beta-\alpha\right|\|JQ\left(\left[\alpha,\beta\right)\right)\psi\|\leq$\\ \text{}\\$\leq\left(\underset{=\gamma}{\overset{\left(\star\right)}{\underbrace{M^{n-1}\gamma_1+M\gamma_2}}}\right)\left|\beta-\alpha\right|\|Q\left(\left[\alpha,\beta\right)\right)\psi\|$
		\end{center}
		as required. Note also that using ($\star$), if we denote by $\gamma_n$ the constant which corresponds with $x^n$, then it can easily be shown by induction that there exists $C>0$ such that $\gamma_n\leq CM^n$.\\\textbf{Second case:} Let $t\in\mathbb{R}$, $\psi\in\mathcal{H}_+$. We have
		\begin{center}
			$\|e^{itJ_\theta}Q\left(\left[\alpha,\beta\right)\right)\psi-e^{itJ}Q\left(\left[\alpha,\beta\right)\right)\psi\|=\|\sum\limits_{n=0}^{\infty}\frac{\left(itJ_\theta\right)^n}{n!}Q\left(\left[\alpha,\beta\right)\right)\psi-\frac{\left(itJ\right)^n}{n!}Q\left(\left[\alpha,\beta\right)\right)\psi\|\leq\sum\limits_{n=0}^\infty\frac{\left|t\right|^n\gamma_n}{n!}\left|\beta-\alpha\right|\|Q\left(\left[\alpha,\beta\right)\right)\psi\|<C\left|\beta-\alpha\right|\|Q\left(\left[\alpha,\beta\right)\right)\psi\|\sum\limits_{n=0}^\infty\frac{\left|tM\right|^n}{n!}=Ce^{tM}\left|\beta-\alpha\right|\|Q\left(\left[\alpha,\beta\right)\right)\psi\|$.
		\end{center}
		Setting $\gamma=Ce^{tM}$, we get the desired result.\\\textbf{Third case:} For functions in $\mathcal{F}_3$, we have the resolvent formula
		\begin{equation}\label{resolvent_formula}
			\left(T-z\right)^{-1}-\left(S-z\right)^{-1}=\left(T-z\right)^{-1}\left(S-T\right)\left(S-z\right)^{-1}
		\end{equation}
		for self-adjoint operators $T,S\in\mathcal{B}\left(\mathcal{H}\right)$ and $z\in\mathbb{C}\setminus\mathbb{R}$. In addition, $\mathcal{H}_+$ is invariant under $\left(J-z\right)^{-1}$. Thus, we have
		\begin{center}
			$\|\left(J_\theta-z\right)^{-1}Q\left(\left[\alpha,\beta\right)\right)\psi-\left(J-z\right)^{-1}Q\left(\left[\alpha,\beta\right)\right)\psi\|=$\\ \text{}\\$=\|\left(J_\theta-z\right)^{-1}\left(J-J_\theta\right)\left(J-z\right)^{-1}Q\left(\left[\alpha,\beta\right)\right)\psi\|=\|\left(J_\theta-z\right)^{-1}\left(J-J_\theta\right)Q\left(\left[\alpha,\beta\right)\right)\left(J-z\right)^{-1}\psi\|\leq\|\left(J_\theta-z\right)^{-1}\|\cdot\gamma\left|\beta-\alpha\right|\|Q\left(\left[\alpha,\beta\right)\right)\left(J-z\right)^{-1}\psi\|=\|\left(J_\theta-z\right)^{-1}\|\gamma\left|\beta-\alpha\right|\|\left(J-z\right)^{-1}Q\left(\left[\alpha,\beta\right)\right)\psi\|\leq$\\ \text{}\\$\leq M^2\gamma\left|\beta-\alpha\right|\|Q\left(\left[\alpha,\beta\right)\right)\psi\|$
		\end{center}
		which implies the desired result.
	\end{proof}
	\begin{remark}
		Note that Theorem \ref{splitting_thm} holds also when taking $a,b\in\left(\frac{\pi}{2},\pi\right)$.
	\end{remark}
	Theorem \ref{main_thm_2_intro} now follows immediately from Theorem \ref{splitting_thm} and Lemma \ref{singular_part_lemma}.
	\subsection{Integrating w.r.t.\ a projection-valued measure}
	Our goal in this section is to prove a (stronger version of) Theorem \ref{main_thm_2_intro}. We will use the following lemma:
	\begin{lemma} \label{int_lemma}
		Let $P$ be a PVM, $T\in\mathcal{B}\left(\mathcal{H}\right)$ self-adjoint and $F:\left[a,b\right]\to\mathcal{B}\left(\mathcal{H}\right)$. Suppose that there exists $M\in\mathbb{R}$ such that $\|T\|\leq M$ and for any $t\in\left[a,b\right]$, $\left\|F\left(t\right)\right\|\leq M$. In addition, suppose that there exists a dense subspace $\mathcal{H}_+\subseteq\mathcal{H}$ and $\gamma>0$ such that for any $a\leq\alpha<\beta\leq b$ and for any $\psi_0\in\mathcal{H}_+$,
		\begin{equation}\label{splitting_thm_cond}
			\|F\left(t\right)P\left(\left[\alpha,\beta\right)\right)\psi_0-TP\left(\left[\alpha,\beta\right)\right)\psi\|<\gamma\left|\beta-\alpha\right|\|P\left(\left[\alpha,\beta\right)\right)\psi_0\|.
		\end{equation}
		Then $F$ is integrable w.r.t.\ $P$ and 
		\begin{center}
			$\int_a^b F\left(t\right)dP\left(t\right)=TP\left(\left[a,b\right)\right)$.
		\end{center}
	\end{lemma}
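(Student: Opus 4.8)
The plan is to bound, in operator norm, the difference between an arbitrary Riemann--Stieltjes sum and the proposed value $TP([a,b))$, and to show this difference tends to $0$ as $|\Delta|\to0$. Fix a partition $\Delta=(x_0=a,\dots,x_n=b)$ with tags $t_k\in[x_{k-1},x_k)$. Since the half-open intervals $[x_{k-1},x_k)$ partition $[a,b)$ and $P$ is a PVM, we have $TP([a,b))=\sum_{k=1}^n TP([x_{k-1},x_k))$, so that
\begin{equation*}
RS(F,\Delta,\Delta^*)-TP([a,b))=\sum_{k=1}^n\bigl(F(t_k)-T\bigr)P([x_{k-1},x_k)).
\end{equation*}

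First I would evaluate this operator on a vector $\psi_0\in\mathcal{H}_+$ and set $\psi_k\coloneqq P([x_{k-1},x_k))\psi_0$. Because the intervals are disjoint and $P$ is a PVM, the projections $P([x_{k-1},x_k))$ have mutually orthogonal ranges; hence $\langle\psi_k,\psi_j\rangle=0$ for $k\neq j$ and, by the Pythagorean theorem, $\sum_k\|\psi_k\|^2=\|P([a,b))\psi_0\|^2\leq\|\psi_0\|^2$. Applying the hypothesis (\ref{splitting_thm_cond}) with $\alpha=x_{k-1}$, $\beta=x_k$ and $t=t_k$ bounds each summand by $\|(F(t_k)-T)\psi_k\|<\gamma|x_k-x_{k-1}|\,\|\psi_k\|$.

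The main obstacle is that the naive estimate $\|\sum_k(F(t_k)-T)\psi_k\|\leq\gamma|\Delta|\sum_k\|\psi_k\|$ is too weak: the number of subintervals $n$ is \emph{not} controlled by $|\Delta|$, so $\sum_k\|\psi_k\|$ need not be small (it can grow like $\sqrt{n}$). The remedy is to retain the individual factor $|x_k-x_{k-1}|$ and apply Cauchy--Schwarz: $\sum_k|x_k-x_{k-1}|\,\|\psi_k\|\leq\left(\sum_k|x_k-x_{k-1}|^2\right)^{1/2}\left(\sum_k\|\psi_k\|^2\right)^{1/2}\leq\left(|\Delta|(b-a)\right)^{1/2}\|\psi_0\|$, where I used $\sum_k|x_k-x_{k-1}|^2\leq|\Delta|\sum_k|x_k-x_{k-1}|=|\Delta|(b-a)$. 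Combined with the triangle inequality, this yields $\|(RS(F,\Delta,\Delta^*)-TP([a,b)))\psi_0\|\leq\gamma\sqrt{(b-a)|\Delta|}\,\|\psi_0\|$ for every $\psi_0\in\mathcal{H}_+$.

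Finally, since $RS(F,\Delta,\Delta^*)-TP([a,b))$ is a bounded operator (a finite sum of bounded operators minus a bounded operator) and $\mathcal{H}_+$ is dense in $\mathcal{H}$, the estimate extends by continuity to all of $\mathcal{H}$, giving $\|RS(F,\Delta,\Delta^*)-TP([a,b))\|\leq\gamma\sqrt{(b-a)|\Delta|}$. This bound is uniform over all partitions and choices of tags and vanishes as $|\Delta|\to0$, so the Riemann--Stieltjes sums converge to $TP([a,b))$ in operator norm; this is exactly the assertion that $F$ is integrable with $\int_a^b F(t)\,dP(t)=TP([a,b))$. Note that the uniform bound $\|T\|,\|F(t)\|\leq M$ is not needed for this convergence argument, only for the standing hypotheses under which the lemma will be applied.
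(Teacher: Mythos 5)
Your proof is correct and follows essentially the same route as the paper's: apply the hypothesis on each subinterval of the partition, combine the terms via orthogonality of the ranges of $P\left(\left[x_{k-1},x_k\right)\right)$ and Cauchy--Schwarz to obtain the bound $\gamma\sqrt{\left(b-a\right)\left|\Delta\right|}\,\|\psi\|$, and use density of $\mathcal{H}_+$. The only (immaterial) differences are that the paper first extends the hypothesis \eqref{splitting_thm_cond} to all $\psi\in\mathcal{H}$ and then estimates the Riemann--Stieltjes sums, whereas you estimate on $\mathcal{H}_+$ and extend the final bound by continuity, and that you correctly keep $TP\left(\left[a,b\right)\right)$ where the paper's display somewhat loosely writes $T\psi$.
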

	\begin{proof}
		We first claim that (\ref{splitting_thm_cond}) holds for any $\psi\in\mathcal{H}$. Indeed, given $\psi\in\mathcal{H}$ and $\psi_0\in\mathcal{H}_+$, we have
		\begin{center}
			$\|F\left(t\right)P\left(\left[\alpha,\beta\right)\right)\psi-TP\left(\left[\alpha,\beta\right)\right)\psi\|\leq\|F\left(t\right)P\left(\left[\alpha,\beta\right)\right)\psi-F\left(t\right)P\left(\left[\alpha,\beta\right)\right)\psi_0\|+\|F\left(t\right)P\left(\left[\alpha,\beta\right)\right)\psi_0-TP\left(\left[\alpha,\beta\right)\right)\psi_0\|+\|TP\left(\left[\alpha,\beta\right)\right)\psi_0-TP\left(\left[\alpha,\beta\right)\right)\psi\|<\|F\left(t\right)\|\|\psi-\psi_0\|+\gamma\left|\beta-\alpha\right|\|P\left(\left[\alpha,\beta\right)\right)\psi_0\|+\|T\|\|\psi-\psi_0\|\leq\gamma\left|\beta-\alpha\right|\|P\left(\left[\alpha,\beta\right)\right)\psi_0\|+\left(\|F\left(t\right)\|+\|T\|+\|P\left(\left[\alpha,\beta\right)\right)\|\right)\|\psi-\psi_0\|\leq\gamma\left|\beta-\alpha\right|\|P\left(\left[\alpha,\beta\right)\right)\psi_0\|+\left(2M+1\right)\|\psi-\psi_0\|$.
		\end{center}
		Letting $\psi_0\to\psi$, we get (\ref{splitting_thm_cond}). Now, let $\psi\in\mathcal{H}$, let $\Delta=\left(a=x_0,
		\ldots,x_n=b\right)$ be any partition and let $\left(t_j\right)_{j=1}^n$ be any choice of points.
		\begin{center}
			$\|\sum\limits_{j=1}^{n}F\left(t_j\right)P\left(\left[x_{j-1},x_j\right)\right)\psi-T\psi\|=\|\sum\limits_{j=1}^nF\left(t_j\right)P\left(\left[x_{j-1},x_j\right)\right)\psi-TP\left(\left[x_{j-1},x_j\right)\right)\psi\|<\sum\limits_{j=1}^n\gamma\left|x_j-x_{j-1}\right|\|P\left(\left[x_{j-1},x_j\right)\right)\psi\|\leq\gamma\left(\sum\limits_{j=1}^n\left|x_j-x_{j-1}\right|^2\right)^{\frac{1}{2}}\left(\sum\limits_{j=1}^n\|P\left(\left[x_{j-1},x_j\right)\right)\psi\|^2\right)^\frac{1}{2}\leq\gamma\left|\Delta\right|\sqrt{b-a}\|\psi\|\underset{\left|\Delta\right|\to0}{\longrightarrow}0$,
		\end{center}
		as required.
	\end{proof}
	Lemma \ref{int_lemma} and Theorem \ref{splitting_thm} imply
	\begin{theorem}\label{integral_thm}
		Let $\mathcal{F},\,Q$ be as in Theorem \ref{splitting_thm}. Then for any $F\in\mathcal{F}$,
		\begin{equation}\label{int_eq}
			\int_I F\left(J_\theta\right)dQ\left(\theta\right)=F\left(J\right)Q\left(I\right),
		\end{equation}
		where $I=\left[0,\pi\right)$.
	\end{theorem}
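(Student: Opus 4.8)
The plan is to read (\ref{int_eq}) as a direct instance of Lemma \ref{int_lemma}. Fix $F\in\mathcal{F}$ and a compact subinterval $[a,b]\subset\left(0,\frac{\pi}{2}\right)$. I would invoke the lemma with the projection-valued measure taken to be $Q$, the dense subspace taken to be $\mathcal{H}_+$, the bounded operator $T$ taken to be $F(J)$, and the operator-valued function of the lemma taken to be $\theta\mapsto F(J_\theta)$. Although Lemma \ref{int_lemma} is phrased for self-adjoint $T$, its proof invokes only the uniform bound $\|T\|\le M$ and never self-adjointness, so it applies verbatim to $T=F(J)$ for every $F\in\mathcal{F}$. (Alternatively one can reduce to the self-adjoint case on $\mathcal{F}_1$, where $T=J^n$, and on $\mathcal{F}_2$, which is closed under conjugation so that $\cos(tJ),\sin(tJ)$ are self-adjoint members of the functional calculus associated to elements of $\mathcal{F}$; only $\mathcal{F}_3$ genuinely needs the non-self-adjoint observation.)

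The two substantive hypotheses of Lemma \ref{int_lemma} are then the uniform boundedness of $F(J)$ and of $\{F(J_\theta)\}_{\theta\in[a,b]}$, and the estimate (\ref{splitting_thm_cond}). For the boundedness, since $\cot$ and $\tan$ are bounded on $[a,b]$ the family $\{J_\theta\}_{\theta\in[a,b]}$ is uniformly bounded, say by $M_0\ge\|J\|$, and then $\|F(J_\theta)\|\le\sup_{|\lambda|\le M_0}|F(\lambda)|=:M$ and $\|F(J)\|\le M$ by the continuous functional calculus, giving a common bound. For the estimate, (\ref{splitting_thm_cond}) with $T=F(J)$, $P=Q$ and $t=\theta$ is exactly the conclusion (\ref{decomp_ineq}) of Theorem \ref{splitting_thm}. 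Hence Lemma \ref{int_lemma} yields
\begin{equation*}
	\int_a^b F(J_\theta)\,dQ(\theta)=F(J)Q\bigl([a,b)\bigr)
\end{equation*}
for every $[a,b]\subset\left(0,\frac{\pi}{2}\right)$, and, by the remark following Theorem \ref{splitting_thm}, the same identity on every $[a,b]\subset\left(\frac{\pi}{2},\pi\right)$.

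It then remains to pass from compact subintervals to all of $I=[0,\pi)$. Using the strong-limit convention for improper Riemann-Stieltjes integrals, I would let $a\downarrow 0$ and $b\uparrow\frac{\pi}{2}$ and exploit the strong continuity of the projection-valued measure $Q$ to conclude that $\int_{(0,\pi/2)}F(J_\theta)\,dQ(\theta)=F(J)Q\bigl((0,\tfrac{\pi}{2})\bigr)$, and likewise on $(\frac{\pi}{2},\pi)$; additivity of both the integral and $Q$ assembles these into the asserted identity on $I$.

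The main obstacle is precisely the behavior at the two values $\theta\in\{0,\tfrac{\pi}{2}\}$ at which $\cot\theta$ or $\tan\theta$ blows up: there $J_\theta$ ceases to be uniformly bounded, so neither Theorem \ref{splitting_thm} nor the boundedness hypothesis of Lemma \ref{int_lemma} is available, and the compact-subinterval identity leaves the atomic contributions $F(J)Q(\{0\})$ and $F(J)Q(\{\tfrac{\pi}{2}\})$ unaccounted for. To close this gap I would argue directly that on the ranges of $Q(\{0\})$ and $Q(\{\tfrac{\pi}{2}\})$ the operator $J$ agrees with the Dirichlet-decoupled operators $J_0$ and $J_{\pi/2}$ (defined through the footnote to (\ref{jac_op_half_line})): by (\ref{measurability_eq}) together with $\frac{d\mu_{00}}{d\mu}=u_E(0)^2$, the subordinate solution attached to the spectral support of these atoms vanishes at site $1$, respectively site $0$, so the coupling term distinguishing $J$ from the decoupled operator acts trivially on the corresponding generalized eigenfunctions. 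This gives $F(J)Q(\{\theta_0\})=F(J_{\theta_0})Q(\{\theta_0\})$ for $\theta_0\in\{0,\tfrac{\pi}{2}\}$, so that the single-point contributions of the integral match the target, and the identity on $I$ follows. This endpoint analysis is where genuine care is needed; the rest is the mechanical combination of the two quoted results.
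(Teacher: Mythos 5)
Your proposal is correct and follows essentially the same route as the paper: apply Lemma \ref{int_lemma} together with the estimate from Theorem \ref{splitting_thm} on compact subintervals, pass to strong limits to exhaust $\left(0,\frac{\pi}{2}\right)$ and $\left(\frac{\pi}{2},\pi\right)$, and verify directly that $J$ and $J_{\theta_0}$ agree on $\mathrm{Ran}\,Q\left(\left\{\theta_0\right\}\right)$ for the atoms $\theta_0\in\left\{0,\frac{\pi}{2}\right\}$ (your observation that Lemma \ref{int_lemma} never actually uses self-adjointness of $T$, needed for $F\in\mathcal{F}_2\cup\mathcal{F}_3$, is a point the paper leaves implicit). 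One small slip: by the boundary condition (\ref{bc_eq_hl}), the subordinate solutions supporting $Q\left(\left\{0\right\}\right)$ vanish at site $0$ and those supporting $Q\left(\left\{\frac{\pi}{2}\right\}\right)$ vanish at site $1$ --- the reverse of what you wrote --- which is exactly what neutralizes the divergent $\cot\theta$, respectively $\tan\theta$, coupling term.
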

	\begin{proof}
		Let $a_n=\frac{1}{n}$, $b_n=\frac{\pi}{2}-\frac{1}{n}$. Then, by Theorem \ref{splitting_thm}, (\ref{splitting_thm_cond}) holds for $F\left(J_\theta\right)$ and $F\left(J\right)Q\left(\left[a_n,b_n\right]\right)$. Thus, by Lemma \ref{int_lemma} we get that
		\begin{center}
			$\int_{a_n}^{b_n}F\left(J_\theta\right)dQ\left(\theta\right)=F\left(J\right)Q\left(\left[a_n,b_n\right]\right)$.
		\end{center}
		Letting $n\to\infty$ and noting that $F\left(J\right)Q\left(\left[a_n,b_n\right]\right)\to F\left(J\right)Q\left(\left(0,\frac{\pi}{2}\right)\right)$ as $n\to\infty$, we get that
		\begin{center}
			$\int_{I_1} F\left(J_\theta\right)dQ\left(\theta\right)=F\left(J\right)Q\left(I_1\right)$,
		\end{center}
		where $I_1=\left(0,\frac{\pi}{2}\right)$. In a similar way, for $I_2=\left(\frac{\pi}{2},\pi\right)$,
		\begin{center}
			$\int_{I_2}F\left(J_\theta\right)dQ\left(\theta\right)=F\left(J\right)Q\left(I_2\right)$.
		\end{center}
		and so
		\begin{center}
			$\int_{I_1\cup I_2}F\left(J_\theta\right)dQ\left(\theta\right)=F\left(J\right)Q\left(I_1\cup I_2\right)$.
		\end{center}
		Now, note that for any $\theta\in\left[0,\pi\right)$ such that $Q\left(\left\{\theta\right\}\right)\neq0$, $J|_{\text{Ran}Q\left(\left\{\theta\right\}\right)}=J_\theta|_{\text{Ran}Q\left(\left\{\theta\right\}\right)}$ and so for any $F\in\mathcal{F}$, \mbox{$F\left(J\right)|_{\text{Ran}Q\left(\left\{\theta\right\}\right)}=F\left(J_\theta\right)|_{\text{Ran}Q\left(\left\{\theta\right\}\right)}$}. Thus, trivially,
		\begin{center}
			$\int_{\left\{\theta\right\}}F\left(J_\theta\right)dQ\left(\theta\right)=F\left(J\right)Q\left(\left\{\theta\right\}\right)$.
		\end{center}
		Thus, since $I=I_1\cup I_2\cup\left\{0\right\}\cup\left\{\frac{\pi}{2}\right\}$, we get (\ref{int_eq}), as required.
	\end{proof}
	Theorem \ref{main_thm_3_intro} now follows from Theorem \ref{integral_thm} and Lemma \ref{singular_part_lemma}, along with the fact that $Q\left(I\right)=P\left(S\right)$, and $S$ supports $\mu_s$.
	\section{Examples and further discussion}
	\subsection{An operator with a symmetric potential}\label{symm_potential_subsec}
	Let us discuss a special case of Jacobi operators in which the potential $b$ satisfies some symmetry condition. For simplicity, in this example we will assume that $a\equiv 1$. We say that $b$ is {\it symmetric around $\frac{1}{2}$} if for any $n\in\mathbb{Z}$, $b_n=b_{-n+1}$.
	\begin{prop}\label{symmetric_V_prop}
		Suppose $b$ is symmetric around $\frac{1}{2}$. Then $Q$ is given by $Q=Q_{\text{Sym}}+Q_{\text{Sym}^\perp}$, where
		\begin{center}
			$Q_{\text{Sym}}\left(A\right)=\begin{cases}
				P_{\text{Sym}} & -\frac{\pi}{4}\in A\\
				0 & \text{otherwise}
			\end{cases},\,\,\,\,
		Q_{\text{Sym}^\perp}\left(A\right)=\begin{cases}
			\text{Id}-P_{\text{Sym}} & \frac{\pi}{4}\in A\\
			0 & \text{otherwise}
		\end{cases}$
		\end{center}
		and $P_{\text{Sym}}$ is the orthogonal projection onto $\mathcal{H}_{\text{Sym}}\coloneqq\left\{\psi\in\mathcal{H}:\psi\left(n\right)=\psi\left(-n+1\right)\right\}$.
	\end{prop}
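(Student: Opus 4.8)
The plan is to exploit the reflection symmetry that the hypothesis $b_n=b_{-n+1}$, together with $a\equiv 1$, forces on $J$. Define $R\colon\ell^2\left(\mathbb{Z}\right)\to\ell^2\left(\mathbb{Z}\right)$ by $\left(R\psi\right)\left(n\right)=\psi\left(1-n\right)$. Then $R$ is a self-adjoint unitary involution, and a direct computation gives $\left(JR\psi\right)\left(n\right)=\psi\left(2-n\right)+\psi\left(-n\right)+b_n\psi\left(1-n\right)$ and $\left(RJ\psi\right)\left(n\right)=\psi\left(-n\right)+\psi\left(2-n\right)+b_{1-n}\psi\left(1-n\right)$, so $RJ=JR$ precisely because $b_n=b_{1-n}$. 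Consequently $P_{\text{Sym}}=\frac{1}{2}\left(\text{Id}+R\right)$ is the orthogonal projection onto $\mathcal{H}_{\text{Sym}}=\left\{\psi:\psi\left(n\right)=\psi\left(-n+1\right)\right\}$, and since $R$ commutes with $J$ it commutes with the spectral measure $P$; in particular $P_{\text{Sym}}$ commutes with every $Q\left(B\right)=P\left(S_B\right)$.

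First I would pin down the symmetry type of the subordinate solutions. Fix $E\in\widetilde{S}$ and let $u_E$ be the subordinate solution of (\ref{ev_eq_line}), which is unique up to a scalar. Since $RJ=JR$, the function $Ru_E$ is again a solution at the same energy $E$; moreover $R$ interchanges $\mathbb{Z}_+$ and $\mathbb{Z}_-$ and preserves the partial norms $\|\cdot\|_L$ entering the definition of subordinacy, so $Ru_E$ is subordinate as well. By uniqueness $Ru_E=\lambda_E u_E$, and $R^2=\text{Id}$ forces $\lambda_E^2=1$, i.e.\ $\lambda_E=\pm1$. This splits $\widetilde{S}$ into the Borel sets $\widetilde{S}^{+}=\left\{E:Ru_E=u_E\right\}$ and $\widetilde{S}^{-}=\left\{E:Ru_E=-u_E\right\}$. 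On $\widetilde{S}^{+}$ we have $u_E\left(0\right)=u_E\left(1\right)$, while on $\widetilde{S}^{-}$ we have $u_E\left(0\right)=-u_E\left(1\right)$; feeding this into the boundary condition (\ref{bc_eq_hl}), which reads $\tan\theta\left(E\right)=-u_E\left(0\right)/u_E\left(1\right)$, shows that $\theta\left(E\right)=\frac{3\pi}{4}$ (which is $-\frac{\pi}{4}$ modulo $\pi$, consistent with the $\pi$-periodicity of $J_\theta$ in $\tan\theta,\cot\theta$) on $\widetilde{S}^{+}$ and $\theta\left(E\right)=\frac{\pi}{4}$ on $\widetilde{S}^{-}$. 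Hence $S_\theta\cap\widetilde{S}$ is $\mu_s$-null unless $\theta\in\left\{\frac{\pi}{4},\frac{3\pi}{4}\right\}$, so $Q$ is supported on these two atoms, with $Q\left(\left\{-\frac{\pi}{4}\right\}\right)=P\left(\widetilde{S}^{+}\right)$ and $Q\left(\left\{\frac{\pi}{4}\right\}\right)=P\left(\widetilde{S}^{-}\right)$.

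Next I would identify these two projections using the eigenfunction expansion of Theorem \ref{main_thm_1_intro}. For any $\psi$, the operator $R$, being bounded, commutes with the integral, so $RP\left(\widetilde{S}^{+}\right)\psi=\int_{\widetilde{S}^{+}}\langle u_E,\psi\rangle Ru_E\,d\mu_s\left(E\right)=\int_{\widetilde{S}^{+}}\langle u_E,\psi\rangle u_E\,d\mu_s\left(E\right)=P\left(\widetilde{S}^{+}\right)\psi$, whence $R$ acts as $+\text{Id}$ on $\operatorname{Ran}P\left(\widetilde{S}^{+}\right)$; likewise $R=-\text{Id}$ on $\operatorname{Ran}P\left(\widetilde{S}^{-}\right)$. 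Thus $\operatorname{Ran}P\left(\widetilde{S}^{+}\right)\subseteq\mathcal{H}_{\text{Sym}}$ and $\operatorname{Ran}P\left(\widetilde{S}^{-}\right)\subseteq\mathcal{H}_{\text{Sym}}^{\perp}$, and since $P\left(\widetilde{S}^{+}\right)+P\left(\widetilde{S}^{-}\right)=P\left(\widetilde{S}\right)$ is exactly the projection onto the singular subspace $\mathcal{H}_s$ (as in the proof of Theorem \ref{main_thm_3_intro}), these orthogonal pieces are forced to be $P\left(\widetilde{S}^{+}\right)=P_{\text{Sym}}P\left(\widetilde{S}\right)$ and $P\left(\widetilde{S}^{-}\right)=\left(\text{Id}-P_{\text{Sym}}\right)P\left(\widetilde{S}\right)$. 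Reading $P_{\text{Sym}}$ as its restriction to $\mathcal{H}_s$, the space on which $Q$ and the decomposition $J_s=\int J_\theta\,dQ$ actually live, yields exactly $Q_{\text{Sym}}$ and $Q_{\text{Sym}^\perp}$ as stated.

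The main obstacle is the step asserting that $Ru_E$ is again subordinate and hence proportional to $u_E$: this is the only place where the reflection symmetry of the potential is genuinely used, and it rests on the facts that subordinacy on $\mathbb{Z}$ is defined symmetrically at $\pm\infty$ and that $R$ preserves the relevant partial norms, so that $R$ maps the unique subordinate solution to a subordinate solution. A secondary point requiring care is the measurability of $\widetilde{S}^{\pm}$ and the $\mu_s$-almost-everywhere validity of the dichotomy $\lambda_E=\pm1$; both follow from the measurability of $E\mapsto\theta\left(E\right)$ established through (\ref{measurability_eq}) and from the fact that each $E\in\widetilde{S}$ lies in a unique $S_\theta$.
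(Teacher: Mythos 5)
Your proof is correct, but it takes a genuinely different route from the paper's. The paper never introduces the reflection operator: it derives the identity $J_+^0=J_-^{\frac{\pi}{2}}$ from the symmetry of $b$ and then works entirely with the half-line Borel transforms, combining the perturbation formula (\ref{pert_formula_m_func}) with the resulting relation (\ref{assist_eq_symm_prop}) to show that $\underset{\epsilon\to0}{\lim}\left|m_+^\theta\left(E+i\epsilon\right)\right|$ and $\underset{\epsilon\to0}{\lim}\left|m_-^\theta\left(E+i\epsilon\right)\right|$ can be simultaneously infinite only when $\cot^2\theta=1$; by Remark \ref{sub_thm_equiv_formulation} this gives $S_+^\theta\cap S_-^\theta=\emptyset$ for $\theta\notin\left\{\frac{\pi}{4},\frac{3\pi}{4}\right\}$, hence the support of $Q$. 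You obtain the same support statement instead from the unitary involution $R$, the commutation $RJ=JR$, and the uniqueness (up to scalars) of the subordinate solution, which forces $Ru_E=\pm u_E$ and pins $\theta\left(E\right)$ to $\frac{3\pi}{4}$ or $\frac{\pi}{4}$ via the boundary condition (\ref{bc_eq_hl}). The two mechanisms carry the same content, but yours is more structural: it transfers verbatim to any symmetry implemented by a unitary commuting with $J$ that exchanges the half-lines, at the price of invoking whole-line uniqueness of subordinate solutions, while the paper's $m$-function computation stays inside the subordinacy formalism and needs no such uniqueness. For the identification of the ranges the paper is terse (``by the properties of the potential\ldots any subordinate solution satisfies $u\left(n\right)=-u\left(-n+1\right)$''), which is precisely your $Ru_E=\pm u_E$ dichotomy; your derivation of $R=\pm\,\text{Id}$ on $\text{Ran}\,P\left(\widetilde{S}^{\pm}\right)$ through the expansion of Theorem \ref{main_thm_1_intro} makes that step explicit. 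Indeed, on one point you are more careful than the paper: the paper asserts $\text{Ran}\,Q\left(\left\{\frac{3\pi}{4}\right\}\right)=\mathcal{H}_{\text{Sym}}$ outright, whereas your argument yields $Q\left(\left\{\frac{3\pi}{4}\right\}\right)=P_{\text{Sym}}P\left(\widetilde{S}\right)$ and $Q\left(\left\{\frac{\pi}{4}\right\}\right)=\left(\text{Id}-P_{\text{Sym}}\right)P\left(\widetilde{S}\right)$, with $\widetilde{S}$ taken Lebesgue-null so that $P\left(\widetilde{S}\right)$ is the projection onto $\mathcal{H}_s$ (Lemma \ref{singular_part_lemma}); this restricted reading is the right interpretation of the statement, since for instance for the free Laplacian ($b\equiv0$, which is symmetric) one has $Q\equiv0$ and the unrestricted equality cannot hold.
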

	\begin{proof}
		By the symmetry of the potential, we get that $J_+^0=J_-^{\frac{\pi}{2}}$. For any $\theta\in\left[0,\pi\right)$, Let $\mu_+^\theta$ ($\mu_-^\theta$) be the spectral measure of $\delta_1$ ($\delta_0$) w.r.t.\ $J_+^\theta$ ($J_-^\theta$). Also let $m_\pm^\theta$ be the Borel transform of $\mu_{\pm}^\theta$. We claim that for any $\theta\in\left[0,\pi\right)$, for any $E\in\mathbb{R}$, $\underset{\epsilon\to0}{\lim}\left|m_\pm^\theta\left(E+i\epsilon\right)\right|=\infty\iff\theta\in\left\{\frac{\pi}{4},\frac{3\pi}{4}\right\}$. The fact that $J_+^0=J_-^{\frac{\pi}{2}}$ along with (\ref{pert_formula_m_func}) imply that
		\begin{equation}\label{assist_eq_symm_prop}
			m_-^\theta=\frac{m_-^{\frac{\pi}{2}}}{1-\cot\theta m_-^\frac{\pi}{2}}=\frac{m_+^0}{1-\cot\theta m_+^0}
		\end{equation}
		Now, suppose that $\underset{\epsilon\to 0}{\lim}\left|m_\pm^\theta\left(E+i\epsilon\right)\right|=\infty$. Then by (\ref{pert_formula_m_func}) we get that $\underset{\epsilon\to 0}{\lim}m_+^{0}\left(E+i\epsilon\right)=\cot\theta$. Plugging this in (\ref{assist_eq_symm_prop}), we get
		\begin{center}
			$\underset{\epsilon\to0}{\lim}\left|m_-^\theta\left(E+i\epsilon\right)\right|=\begin{cases}
				\left|\frac{\cot\theta}{1-\cot^2\theta}\right| & \theta\in\left[0,\pi\right)\setminus\left\{\frac{\pi}{4},\frac{3\pi}{4}\right\}\\
				\infty & \theta\in\left\{\frac{\pi}{4},\frac{3\pi}{4}\right\}
			\end{cases}$,
		\end{center}
		as required. For any $\theta\in\left[0,\pi\right)$, let
		\begin{center}
			$S_{\pm}^{\theta}=\left\{E\in\mathbb{R}:u_{\theta,E}^\pm\text{ is subordinate as a solution to (\ref{ev_eq1})}\right\}$.
		\end{center}
		Recall that by Theorem \ref{sub_thm_line}, $\mu_s$ is supported on the set $S$ which satisfies $S=\underset{\theta\in\left[0,\pi\right)}{\bigcup}S_-^\theta\cap S_+^\theta$. Now, by Remark \ref{sub_thm_equiv_formulation}, we have
		\begin{center}
			$S_\pm^\theta=\left\{E\in\mathbb{R}:\underset{\epsilon\to0}{\lim}\left|m_\pm^\theta\left(E+i\epsilon\right)\right|=\infty\right\}$,
		\end{center}
		and by what we've shown so far, if $\theta\in\left[0,\pi\right)\setminus\left\{\frac{\pi}{4},\frac{3\pi}{4}\right\}$ then $S_+^\theta\cap S_-^\theta=\emptyset$, and otherwise $S_-^\theta=S_+^\theta$. Thus, we get that $Q$ is supported on $\left\{\frac{\pi}{4},\frac{3\pi}{4}\right\}$. By the properties of the potential we get that for any \mbox{$E\in S_-^{\frac{\pi}{4}}\cap S_+^\frac{\pi}{4}$}, any subordinate solution $u$ of (\ref{ev_eq_line}) satisfies $u\left(n\right)=-u\left(-n+1\right)$, and so \mbox{$\text{Ran}Q\left(\left\{\frac{3\pi}{4}\right\}\right)=\mathcal{H}_{\text{Sym}}^{\perp}$}. Similarly, we get that $\text{Ran}Q\left(\left\{\frac{3\pi}{4}\right\}\right)=\mathcal{H}_\text{Sym}$, which implies the desired result.
	\end{proof}
	\subsection{An operator with pure point spectrum}
	Suppose now that $J$ has pure point spectrum. Recall that $S_\theta\coloneqq S_-^\theta\cap S_-^\theta$ is the set of energies for which there exists a subordinate solution to (\ref{ev_eq_line}) which satisfies (\ref{bc_eq_hl}). In addition, in the case of pure point spectrum, we may assume that a solution is subordinate if and only if it is $\ell^2$ at $\pm\infty$, namely we can take a support $S$ to be the set of eigenvalues of $J$. Then, the set $S_\theta$ which is defined in Section \ref{integral_sec} consists of the eigenvalues of $J$ for which the corresponding eigenvector $\psi$ satisfies
	\begin{center}
		$\psi\left(0\right)\cos\theta+\psi\left(1\right)\sin\theta=0$.
	\end{center}
	For each $\theta\in\left[0,\pi\right)$ such that the set $S_\theta$ is non-empty, we have that $Q\left(S_\theta\right)\neq 0$, and so $Q$ is supported on a countable set $\left\{\theta_n\right\}_n\in\mathbb{N}$. Thus, for any $F\in\mathcal{F}$, where $\mathcal{F}$ is the family of functions given in Theorem \ref{splitting_thm}, the integral formula (\ref{int_eq}) is given by
	\begin{center}
		$\int_I F\left(J_\theta\right)dQ\left(\theta\right)=\underset{n\in\mathbb{N}}{\oplus}F\left(J_{\theta_n}\right)Q\left(\left\{\theta_n\right\}\right)=F\left(J\right)$,
	\end{center}
	where $I=\left[0,\pi\right)$.
	\section{Appendix - Proof of Proposition \ref{L_prop}}
	Our goal in this appendix is to prove Proposition \ref{L_prop}. For any $n\in\mathbb{N}$, we denote \mbox{$\left[-n,n\right]\coloneqq\left\{-n,-\left(n-1\right),\ldots,0,\ldots,n-1,n\right\}$}. We will use the following theorem, which is a special case of \cite[Lemma 3.2]{L}:
	\begin{lemma}\label{Schur_lemma}
		Let $J:\ell^2\left(\mathbb{Z}\right)\to\ell^2\left(\mathbb{Z}\right)$ be a Jacobi operator. Given $n\in\mathbb{N}$ and $z\in\mathbb{C}$, let \mbox{$F\left(n,z\right)\in\mathcal{M}_{2n}\left(\mathbb{C}\right)$} be defined by
		\begin{center}
			$\left(F\left(n,z\right)\right)_{kj}=\langle\delta_k,\left(J-z\right)^{-1}\delta_j\rangle,\,\,\,\,\,\,\,\,\,k,j\in\left[-n,n\right]$.
		\end{center}
		Then $F\left(n,z\right)$ is invertible, and we have
		\begin{equation}\label{F_inv_eq}
			\left(F\left(n,z\right)\right)^{-1}=A\left(n\right)+\diag\left(\frac{1}{m_{-n}\left(z\right)},\ldots,\frac{1}{m_{+n}\left(z\right)}\right)
		\end{equation}
		where $A\left(n\right)\in\mathcal{M}_{2n}$ is given by
		\begin{center}
			$A\left(n,z\right)=\left(\begin{matrix}
				\frac{1}{m_{-n}\left(z\right)} & a_{-n} & 0 & 0 & \cdots & 0 & 0\\
				a_{-n} & b_{-n+1}-z & a_{-n+1} & 0 & \cdots & 0 & 0\\
				0 & a_{-n+1} & b_{-n+2}-z & a_{-n+2} & \cdots & 0 & 0\\
				\vdots & \vdots & \vdots & \vdots & \ddots & \vdots & \vdots\\
				0 & 0 & 0 & 0 & \cdots & b_{n-1}-z & a_{n-1}\\
				0 & 0 & 0 & 0 & \cdots & a_{n-1} & \frac{1}{m_{+n}\left(z\right)}
			\end{matrix}\right)$
		\end{center}
		and $m_{\pm n}$ is the Borel transform of the spectral measure of $\delta_{\pm n}$ w.r.t.\ the restriction of $J$ to $\left\{\pm n,\pm n+1,\ldots\right\}$.
	\end{lemma}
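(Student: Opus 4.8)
The natural tool here is the Schur complement (Feshbach) formula for the compression of a resolvent to a finite box. The plan is to set $\Lambda=\{-n,\ldots,n\}$, write $P_\Lambda$ for the coordinate projection onto $\Lambda$, and block-decompose $J-z$ with respect to $\ell^2(\mathbb{Z})=P_\Lambda\ell^2(\mathbb{Z})\oplus P_{\Lambda^c}\ell^2(\mathbb{Z})$ as $\left(\begin{smallmatrix} a & b\\ c & d\end{smallmatrix}\right)$, where $a=P_\Lambda(J-z)P_\Lambda$, $d=P_{\Lambda^c}(J-z)P_{\Lambda^c}$, and $b,c$ are the coupling blocks. For $z$ in the upper half-plane the operator $J-z$ is invertible, and so is the exterior block $d$, since it equals $H_{\Lambda^c}-z$ for a self-adjoint $H_{\Lambda^c}$. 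The Feshbach formula then identifies the $\Lambda$-compression of the resolvent as $F(n,z)=(a-bd^{-1}c)^{-1}$; in particular $F(n,z)$ is invertible with $F(n,z)^{-1}=a-bd^{-1}c$. This settles the invertibility claim and reduces everything to computing the self-energy $bd^{-1}c$.

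First I would compute the coupling blocks. Because $J$ is tridiagonal, the only matrix elements of $J-z$ joining $\Lambda$ to $\Lambda^c$ sit at the two boundaries: $\delta_{-n}$ couples to $\delta_{-n-1}$ with weight $a_{-n-1}$, and $\delta_n$ couples to $\delta_{n+1}$ with weight $a_n$. Moreover $\Lambda^c=\{\ldots,-n-1\}\cup\{n+1,\ldots\}$ is a disjoint union of two half-lines with no coupling between them, so $d=d_-\oplus d_+$ block-diagonalizes and $d^{-1}=d_-^{-1}\oplus d_+^{-1}$. Consequently $bd^{-1}c$ is diagonal and supported only at the two corner sites: its $(n,n)$ entry is $a_n^{2}\langle\delta_{n+1},d_+^{-1}\delta_{n+1}\rangle$ and its $(-n,-n)$ entry is $a_{-n-1}^{2}\langle\delta_{-n-1},d_-^{-1}\delta_{-n-1}\rangle$ (the corner-to-corner entries vanish by block-diagonality of $d^{-1}$). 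The key observation is that $\langle\delta_{n+1},d_+^{-1}\delta_{n+1}\rangle$ is exactly the diagonal Green's function at the boundary of $J$ restricted to $\{n+1,n+2,\ldots\}$, i.e.\ the half-line $m$-function $m_{n+1}(z)$, and likewise the left corner produces $m_{-(n+1)}(z)$. Thus every off-diagonal entry and every interior diagonal entry of $F(n,z)^{-1}$ coincides with that of $a=P_\Lambda(J-z)P_\Lambda$, and only the two corners are modified.

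It then remains to identify the dressed corners with the reciprocals of $m_{\pm n}$. Here I would invoke the coefficient-stripping identity for half-line $m$-functions, namely $\tfrac{1}{m_{+n}(z)}=b_n-z-a_n^{2}m_{n+1}(z)$ and $\tfrac{1}{m_{-n}(z)}=b_{-n}-z-a_{-n-1}^{2}m_{-(n+1)}(z)$, which record the boundary $m$-function after removing one site. The $(n,n)$ entry of $a-bd^{-1}c$ is precisely $(b_n-z)-a_n^{2}m_{n+1}(z)=\tfrac{1}{m_{+n}(z)}$, and similarly the $(-n,-n)$ entry equals $\tfrac{1}{m_{-n}(z)}$; matching all entries yields the form (\ref{F_inv_eq}). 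I expect the main obstacle to be not the algebra but the two structural facts underlying it: the invertibility and block-diagonality of the exterior block $d$ (so that Feshbach applies and the self-energy decouples into two independent half-line contributions), and the identification of the corner entry of $d^{-1}$ with a half-line $m$-function together with the coefficient-stripping step that collapses $b_{\pm n}-z$ and the self-energy into a single reciprocal $m$-function. Once these are in place the result follows by reading off the entries.
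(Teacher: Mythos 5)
Your proof is correct, and in fact the paper offers no proof of this lemma to compare against: it is cited as a special case of \cite[Lemma 3.2]{L}, and the Schur complement (Feshbach) computation you give is exactly the standard argument behind such statements. All your steps check: for $\im z>0$ both $J-z$ and the exterior block $d=P_{\Lambda^c}\left(J-z\right)P_{\Lambda^c}$ are boundedly invertible, the block factorization shows the Schur complement $a-bd^{-1}c$ is invertible with $P_\Lambda\left(J-z\right)^{-1}P_\Lambda=\left(a-bd^{-1}c\right)^{-1}$, the two components of $\Lambda^c$ are decoupled so the self-energy is diagonal with the only nonzero entries $a_n^2 m_{n+1}\left(z\right)$ at $\left(n,n\right)$ and $a_{-n-1}^2 m_{-\left(n+1\right)}\left(z\right)$ at $\left(-n,-n\right)$, and the coefficient-stripping identity $\frac{1}{m_{+n}\left(z\right)}=b_n-z-a_n^2m_{n+1}\left(z\right)$ (and its mirror image) is correct for the Borel transform convention $\int\frac{d\sigma\left(E\right)}{E-z}$ used here. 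One remark worth making explicit: your computation yields $F\left(n,z\right)^{-1}$ equal to the \emph{displayed} matrix $A\left(n,z\right)$, whose corner entries are already $\frac{1}{m_{\pm n}\left(z\right)}$; taken literally, equation \eqref{F_inv_eq} with the extra summand $\diag\left(\frac{1}{m_{-n}\left(z\right)},\ldots,\frac{1}{m_{+n}\left(z\right)}\right)$ would count those corners twice, so the statement as typeset contains a redundancy (in \cite{L} the tridiagonal part and the diagonal $m$-function correction, supported on the boundary vertices, are kept separate). Your version, with the corners dressed exactly once, is the correct reading; note also that $\left[-n,n\right]$ has $2n+1$ points, so $F\left(n,z\right)$ is really a $\left(2n+1\right)\times\left(2n+1\right)$ matrix, another small slip in the statement that does not affect your argument.
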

	We now turn to prove Proposition \ref{L_prop}.
	\begin{proof}
		For any $k,j\in\mathbb{Z}$, $\mu_{kj}\ll\mu$. Let $f_{kj}=\frac{d\mu_{kj}}{d\mu}$. It is well known that for $\mu$-almost every $E\in\mathbb{R}$, we have
		\begin{center}
			$f_{kj}\left(E\right)=\underset{\epsilon\to0}{\lim}\frac{\mu_{kj}\left(\left(E-\epsilon,E+\epsilon\right)\right)}{\mu\left(\left(E-\epsilon,E+\epsilon\right)\right)}$
		\end{center}
		and so by Theorem \ref{pol_thm}, there exists a set $\widetilde{S}_{kj}\subseteq\mathbb{R}$ which supports $\mu_s$, such that for any $E\in\widetilde{S}_{kj}$, we have
		\begin{equation}\label{limit_eq_S_prop}
			\underset{\epsilon\to0}{\lim}
			\frac{\mu_{kj}\left(\left(E-\epsilon,E+\epsilon\right)\right)}{\mu\left(\left(E-\epsilon,E+\epsilon\right)\right)}=\underset{\epsilon\to0}{\lim}\frac{M_{kj}\left(E+i\epsilon\right)}{M\left(E+i\epsilon\right)}.
		\end{equation}
		Let $\widetilde{S}=S\cap\underset{k,j\in\mathbb{Z}}{\bigcap}\widetilde{S}_{kj}$. It is easy to see that $\widetilde{S}$ supports $\mu_s$, and that (\ref{limit_eq_S_prop}) holds for any $E\in\widetilde{S}$ and for any $k,j\in\mathbb{Z}$. We denote the limit by $\alpha_{kj}$. By Theorem \ref{supp_thm}, we may assume that for any $E\in\widetilde{S}$,
		\begin{equation}\label{infinite_M_eq}
			\underset{\epsilon\to0}{\lim}\left|M\left(E+i\epsilon\right)\right|=\infty.
		\end{equation}
		Fix $n\in\mathbb{Z}$, $E\in\mathbb{R}$ and $k\in\left[-n.n\right]$. Let $e_k$ be the $k$'th element in the standard basis of $\mathbb{C}^{2n}$. By (\ref{infinite_M_eq}), we have
		\begin{center}
			$0=\underset{\epsilon\to0}{\lim}\frac{1}{\left|M\left(E+i\epsilon\right)\right|}=\underset{\epsilon\to0}{\lim}\left\|\frac{e_k}{\left|M\left(E+i\epsilon\right)\right|}\right\|=\underset{\epsilon\to0}{\lim}\left\|F\left(n,E+i\epsilon\right)^{-1}\left(F\left(n,E+i\epsilon\right)\frac{e_k}{M\left(E+i\epsilon\right)}\right)\right\|$.
		\end{center}
		Now, note that $F\left(n,E+i\epsilon\right)=\left(\begin{matrix}
			M_{1k}\left(E+i\epsilon\right)\\
			\vdots\\
			M_{nk}\left(E+i\epsilon\right)
		\end{matrix}\right)$ and so we get
		\begin{equation}\label{limit_0_eq}
			0=\underset{\epsilon\to0}{\lim}F\left(n,E+i\epsilon\right)^{-1}\left(\begin{matrix}
				\frac{M_{-nk}\left(E+i\epsilon\right)}{M\left(E+i\epsilon\right)}\\
				\vdots\\
				\frac{M_{nk}\left(E+i\epsilon\right)}{M\left(E+i\epsilon\right)}
			\end{matrix}\right)
		\end{equation}
		Taking into account (\ref{F_inv_eq}), we get that for any $j\in\left[-\left(n-1\right),n-1\right]$,
		\begin{equation}\label{eq_2_S_prop}
			\underset{\epsilon\to0}{\lim}a_{j-1}\frac{M_{\left(j-1\right)k}\left(E+i\epsilon\right)}{M\left(E+i\epsilon\right)}+a_j\frac{M_{\left(j+1\right)k}\left(E+i\epsilon\right)}{M\left(E+i\epsilon\right)}+\frac{M_{jk}\left(E+i\epsilon\right)}{M\left(E+i\epsilon\right)}\left(b_j-\left(E+i\epsilon\right)\right)=0,
		\end{equation}
		Define $\widetilde{u}$ on $\left[-n,n\right]$ by $\widetilde{u}\left(j\right)=\alpha_{kj}$. By (\ref{eq_2_S_prop}), we get that for any $j\in\left[-\left(n-1\right),n-1\right]$,
		\begin{center}
			$a_{j-1}\widetilde{u}\left(j-1\right)+a_j\widetilde{u}\left(j+1\right)+b_j\widetilde{u}\left(j\right)=0$.
		\end{center}
		This implies that there exists a solution $u$ of (\ref{ev_eq_line}) such that $u|_{\left[-n,n\right]}=\widetilde{u}|_{\left[-n,n\right]}$. We claim that $u$ is either a subordinate solution or $u$ vanishes identically. We will show this for $u|_{\left[n,\infty\right)}$, the proof for $u|_{\left(-\infty,-n\right]}$ is similar. Suppose $\alpha_n=0$. In addition, suppose that there exists a sequence $\left(\epsilon_n\right)_{n=1}^{\infty}$ such that $\underset{n\to\infty}{\lim}\epsilon_n=0$ and
		\begin{center}
			$\underset{n\to\infty}{\lim}m_{+n}\left(E+i\epsilon_n\right)\neq 0$.
		\end{center}
		By (\ref{F_inv_eq}) and (\ref{limit_0_eq}), this implies that $a_{n-1}\alpha_{n-1}=0$. Thus, $u$ is a solution of (\ref{ev_eq_line}) which satisfies $u\left(n-1\right)=u\left(n\right)=0$, which implies that it vanishes identically. If no such sequence $\left(\epsilon_n\right)_{n=1}^{\infty}$ exists, then $\underset{\epsilon\to0}{\lim}m_{+n}\left(E+i\epsilon\right)=0$. The half-line subordinacy now implies (see Remark \ref{sub_thm_equiv_formulation}) that any non-zero solution which satisfies $u\left(n\right)=0$ is subordinate. This concludes the proof for the case $\alpha_n=0$. Suppose now that $\alpha_n\neq 0$. Again, by (\ref{F_inv_eq}) and (\ref{limit_0_eq}), we get that the limit
		\begin{center}
			$\underset{\epsilon\to0}{\lim}\frac{1}{m_{+n}\left(E+i\epsilon\right)}$
		\end{center}
		exists and is real, and so there exists $\theta\in\left[0,\pi\right)$ such that $\underset{\epsilon\to0}{\lim}m_{+n}\left(E+i\epsilon\right)=\cot\theta$. By (\ref{JL_hl_sub_thm}), we get $u|_{\left[n,\infty\right)}$ is subordinate if and only if $a_{n-1}u\left(n-1\right)\cos\theta+u\left(n\right)\sin\theta=0$. By (\ref{F_inv_eq}) and (\ref{limit_0_eq}), we get
		\begin{equation}
			a_{n-1}\alpha_{n-1}+\tan\theta\alpha_n=\underset{\epsilon\to0}{\lim}a_{n-1}\frac{M_{\left(n-1\right)k}\left(E+i\epsilon\right)}{M\left(E+i\epsilon\right)}+\frac{M_{nk}\left(E+i\epsilon\right)}{M\left(E+i\epsilon\right)}\frac{1}{m_{+n}\left(E+i\epsilon\right)}=0
		\end{equation}
		which implies that $u$ is subordinate, as required. For any $E\in\widetilde{S}$, let us denote by $\Gamma\left(E\right)$ the $\mathbb{Z}\times\mathbb{Z}$ matrix which is given by $\Gamma\left(E\right)_{kj}=\alpha_{kj}$. We conclude that for any $j\in\mathbb{Z}$ and for any $E\in\widetilde{S}$, the function $u:\mathbb{Z}\to\mathbb{C}$ which is defined by $u\left(k\right)=\Gamma\left(E\right)_{kj}$ is either a subordinate solution of (\ref{ev_eq_line}) or identically zero. Thus, for any $n\in\mathbb{Z}$, the matrix $\Gamma_n\left(E\right)\in M_{2n}\left(\mathbb{C}\right)$ which is given by restricting $\Gamma\left(E\right)$ to $\left[-n,n\right]\times\left[-n,n\right]$ has rank one. We claim that it is also self-adjoint. For any $\epsilon>0$, let $I_\epsilon^E\coloneqq\left(E-\epsilon,E+\epsilon\right)$. For any $k,j\in\left[-n,n\right]$, we have
		\begin{center}
			$\Gamma\left(E\right)_{kj}=\underset{\epsilon\to0}{\lim}\frac{\mu_{kj}\left(I_\epsilon^E\right)}{\mu\left(I_\epsilon^E\right)}=\underset{\epsilon\to0}{\lim}\frac{1}{\mu\left(I_\epsilon^E\right)}\langle P\left(I_\epsilon^E\right)\delta_k,\delta_j\rangle=\underset{\epsilon\to0}{\lim}\frac{1}{\mu\left(I_\epsilon^E\right)}\langle\delta_k,P\left(I_\epsilon^E\right)\delta_j\rangle\overset{\left(\star\right)}{=}\underset{\epsilon\to0}{\lim}\frac{1}{\mu\left(I_\epsilon^E\right)}\langle P\left(I_\epsilon^E\right)\delta_j,\delta_k\rangle=\underset{\epsilon\to0}{\lim}\frac{\mu_{jk}\left(I_\epsilon^E\right)}{\mu\left(I_\epsilon^E\right)}=\Gamma\left(E\right)_{jk}$.
		\end{center}
		Let us justify $\left(\star\right)$. First, note that $P\left(I_\epsilon^E\right)=\underset{n\to\infty}{\lim}q_n\left(J\right)$, where $\left(q_n\right)_{n=1}^{\infty}\subseteq\mathbb{R}\left[X\right]$. This implies that $\langle\delta_k,P\left(I_\epsilon^E\right)\delta_j\rangle=\underset{n\to\infty}{\lim}\langle\delta_k,q_n\left(J\right)\delta_j\rangle$, and this implies that $\langle\delta_k,P\left(I_\epsilon^E\right)\delta_j\rangle\in\mathbb{R}$ as a limit of a sequence of real numbers, as required.
		
		Since $\Gamma_n\left(E\right)$ is a self-adjoint matrix of rank-one, there exists $u_n\in\mathbb{C}^n$ such that \mbox{$\Gamma\left(E\right)_{kj}=u_n\left(k\right)u_n\left(j\right)$}. Furthermore, it is clear that for every $n,m\in\mathbb{N}$ such that $n>m$, $u_n|_{\left[-m,m\right]}=u_m$ and so there exists $u_E:\mathbb{Z}\to\mathbb{C}$ such that for any $k,j\in\mathbb{Z}$, $\left(\Gamma\left(E\right)\right)_{kj}=u_E\left(k\right)u_E\left(j\right)$. Finally, by what we have shown, $u$ must be a subordinate solution of (\ref{ev_eq_line}), and
		\begin{center}
			$u_E\left(0\right)^2+u_E\left(1\right)^2=\Gamma\left(E\right)_{00}+\Gamma\left(E\right)_{11}=\underset{\epsilon\to0}{\lim}\frac{\mu_{00}\left(I_\epsilon^E\right)}{\mu\left(I_\epsilon^E\right)}+\frac{\mu_{11}\left(I_\epsilon^E\right)}{\mu\left(I_\epsilon^E\right)}=\underset{\epsilon\to0}{\lim}\frac{\mu\left(I_\epsilon^E\right)}{\mu\left(I_\epsilon^E\right)}=1$,
		\end{center}
		as required.
	\end{proof}
	
\end{document}